\numberwithin{equation}{section}
\newcommand{\ds}{\displaystyle}
\newcommand{\bgamma}{{\boldsymbol\gamma}}
\newcommand{\bomega}{{\boldsymbol\omega}}
\newcommand{\bsi}{{\boldsymbol\sigma}}
\newcommand{\bPi}{{\mbox{\boldmath $\Pi$}}}
\newcommand{\btau}{{\boldsymbol\tau}}
\newcommand{\bzeta}{{\boldsymbol\zeta}}
\newcommand{\bchi}{{\boldsymbol\chi}}
\newcommand{\bv}{{\mathbf{v}}}
\newcommand{\bw}{{\mathbf{w}}}
\newcommand{\f}{\mathbf{f}}
\newcommand{\bc}{\mathbf{c}}
\newcommand{\bi}{\mathbf{i}}
\newcommand{\bu}{\mathbf{u}}
\newcommand{\by}{{\mathbf{y}}}
\newcommand{\bz}{{\mathbf{z}}}
\newcommand{\bn}{{\mathbf{n}}}
\def\bs{\mathbf{s}}
\newcommand{\0}{{\mathbf{0}}}
\def\bA{\mathbf{A}}
\def\bB{\mathbf{B}}
\def\bC{\mathbf{C}}
\def\bF{\mathbf{F}}
\def\bG{\mathbf{G}}
\def\bI{\mathbf{I}}
\def\bW{\mathbf{W}}
\def\bM{\mathbf{M}}
\def\bT{\mathbf{T}}
\def\bP{\mathbf{P}}
\def\bRT{\mathbf{RT}}
\def\bx{\mathbf{x}}
\newcommand{\bL}{\mathbf{L}}
\newcommand\bH{\mathbf{H}}
\newcommand\bbM{\mathbb{M}}
\newcommand\bbI{\mathbb{I}}
\newcommand\bbH{\mathbb{H}}
\newcommand\bbL{\mathbb{L}}
\newcommand{\bbRT}{\mathbb{RT}}
\newcommand{\cE}{\mathcal{E}}
\newcommand{\cT}{\mathcal{T}}
\newcommand{\cO}{\mathcal{O}}
\newcommand{\cP}{\mathcal{P}}
\newcommand{\cR}{\mathcal{R}}
\def\rp{\mathrm{p}}
\def\rq{\mathrm{q}}
\def\R{\mathrm{R}}
\def\H{\mathrm{H}}
\def\L{\mathrm{L}}
\def\M{\mathrm{M}}
\def\W{\mathrm{W}}
\def\rd{\mathrm{d}}
\def\re{\mathsf{e}}
\def\sr{\mathsf{r}}
\def\rD{\mathrm{D}}
\def\rP{\mathrm{P}}
\def\rt{\mathrm{t}}
\def\ttd{\mathtt{d}}
\def\tF{\mathtt{F}}
\def\bdiv{\mathbf{div}}
\def\bcurl{\mathbf{curl}}
\def\ucurl{\underline{\mathrm{curl}}}
\newcommand{\ubcurl}{\underline{\bcurl}}
\newcommand{\ubgamma}{\underline{\bgamma}}
\def\tr{\mathrm{tr}}
\def\div{\mathrm{div}}
\def\dist{\mathrm{dist}\,}
\def\pil{\left<}
\def\pir{\right>}
\def\coeff{\mathbf{coeff}}
\def\iter{\mathtt{iter}}
\def\tol{\textsf{tol}}
\def\DOF{\mathtt{DOF}}
\def\with{{\quad\hbox{with}\quad}}
\def\qin{{\quad\hbox{in}\quad}}
\def\qon{{\quad\hbox{on}\quad}}
\def\qan{{\quad\hbox{and}\quad}}
\def\ov{\overline}
\def\wt{\widetilde}
\def\wh{\widehat}
\def\jump#1{\text{ $\hspace{-0.1cm}\left[\!\left[#1\right]\!\right]$}}
\newtheorem{thm}{Theorem}[section]
\newtheorem{rem}{Remark}[section]
\newtheorem{lem}[thm]{Lemma}
\newenvironment{proof}{\noindent{\it Proof.}}{\hfill$\square$}
\numberwithin{equation}{section}
\numberwithin{figure}{section}
\numberwithin{table}{section}
\title{An augmented mixed FEM for the convective Brinkman--Forchheimer problem: {\it a priori} and {\it a posteriori} error analysis\thanks{This work was partially supported by 
ANID-Chile through the project {\sc Centro de Modelamiento Matem\'atico} (FB210005)
and Fondecyt project 11220393.}}
\author{{\sc Sergio Caucao}\thanks{Departamento de Matem\'atica y F\'isica Aplicadas, 
Universidad Cat\'olica de la Sant\'isima Concepci\'on, Casilla 297, Concepci\'on, Chile,
and Grupo de Investigaci\'on en An\'alisis Num\'erico y C\'alculo Cient\'ifico, GIANuC$^2$, Concepci\'on, Chile, 
email: {\tt scaucao@ucsc.cl, jesparza@magister.ucsc.cl}}
\quad
{\sc Johann Esparza$^\dag$}}
\date{ }
\begin{document}

\maketitle

\begin{abstract}
\noindent We propose and analyze an augmented mixed finite element method for the pseudostress-velocity formulation of the stationary convective Brinkman--Forchheimer problem in $\R^d,\,d\in \{2,3\}$. 
Since the convective and Forchheimer terms forces the velocity to live in a smaller space than usual, we augment the variational formulation with suitable Galerkin type terms. 
The resulting augmented scheme is written equivalently as a fixed point equation, so that the well-known Schauder and Banach theorems, combined with the Lax--Milgram theorem, allow to prove the unique solvability of the continuous problem. 
The finite element discretization involves Raviart--Thomas spaces of order $k\geq 0$ for the pseudostress tensor and continuous piecewise polynomials of degree $\le k + 1$ for the velocity. 
Stability, convergence, and {\it a priori error} estimates for the associated Galerkin scheme are obtained.
In addition, we derive two reliable and efficient residual-based {\it a posteriori} error estimators for this problem on arbitrary 
polygonal and polyhedral regions. 
The reliability of the proposed estimators draws mainly upon the uniform ellipticity of the form involved, a suitable assumption on the data, a stable Helmholtz decomposition, and the local approximation properties of the Cl\'ement and Raviart--Thomas operators. 
In turn, inverse inequalities, the localization technique based on bubble functions, and known results from previous works, are the main tools yielding the efficiency estimate. 
Finally, some numerical examples illustrating the performance of the mixed finite element method, confirming the theoretical rate of convergence and the properties of the estimators, and showing the behaviour of the associated adaptive algorithms, are reported.
In particular, the case of flow through a $2$D porous media with fracture networks is considered.
\end{abstract}

\noindent
{\bf Key words}: convective Brinkman--Forchheimer equations, 
pseudoestress-velocity formulation, fixed point theory, mixed finite element methods, 
{\it a priori} error analysis, {\it a posteriori} error analysis

\smallskip\noindent
{\bf Mathematics subject classifications (2000)}: 65N30, 65N12, 65N15, 35Q79, 80A20, 76R05, 76D07

\maketitle


\section{Introduction}\label{sec:introduction}

This paper focuses on the study of the mathematical and computational modeling of flow of fluids through highly porous media at higher Reynolds numbers using the stationary convective Brinkman--Forchheimer equations.
Such flows occur in a wide range of applications, among which we highlight predicting and controlling processes arising in petroleum, chemical and environmental engineering. 
Fast flows in the subsurface may occur in fractured or vuggy aquifers or reservoirs, as well as near injection and production wells during groundwater remediation or hydrocarbon production. 
Many of the investigations in porous media have focused on the use of Darcy's law. 
Nevertheless, this fundamental equation may be inaccurate for modeling fluid flow through porous media with high Reynolds numbers or through media with high porosity. 
To overcome this limitation, it is possible to consider the convective Brinkman--Forchheimer equations (see for instance \cite{cku2005,zy2012,ll2019,y2023}), where terms are added to Darcy's law in order to take into account high velocity flow and high porosity.

Up to the authors' knowledge, \cite{cku2005} constitutes one of the first works in analyzing the convective Brinkman--Forchheimer (CBF) equations. 
In that work, a complete analysis of the continuous dependence of solutions on the Forchheimer coefficient in $H^1$ norm is proved by the authors.
Later on, an approximation of solutions for the incompressible CBF equations via the artificial compressibility method was proposed and analyzed in \cite{zy2012}, where a family of perturbed compressible CBF equations that approximate the incompressible CBF equations is introduced.
Existence and convergence of solutions for the compressible CBF equations to the solutions of the incompressible CBF equations is proved.
More recently, the two-dimensional stationary CBF equations were analyzed in \cite{ll2019}.
The focus of this work is on the well-posedness of the corresponding velocity-pressure variational formulation.
In particular, error estimates for a mixed finite element approximation were obtained and 
a one-step Newton iteration algorithm initialized using a fixed-point iteration was proposed.
In turn, the existence and uniqueness of an axisymmetric solution to the three-dimensional incompressible CBF equations were proved in \cite{y2023}.
Meanwhile, a mixed pseudostress-velocity formulation but for the unsteady Brinkman--Forchheimer equations was analyzed in \cite{cy2021}.
Here, the existence and uniqueness of a solution are established for the weak formulation in a Banach space framework. 
Semidiscrete continuous-in-time and fully discrete mixed finite element approximations are introduced and sub-optimal rates of convergence are established. 

The goal of the present paper is to develop and analyze a new mixed formulation for the stationary convective Brinkman--Forchheimer problem and study its numerical approximation by an augmented mixed finite element method. 
To that end, unlike previous works, we introduce the pseudostress tensor as in \cite{cot2017-MC} (see also \cite{cgot2016,cgor2017,gos2018}) and subsequently eliminate the pressure unknown using the incompressibi\-lity condition.
Furthermore, the difficulty given by the fact that the fluid velocity lives in $\H^1$ instead of $\L^2$ as usual, is resolved as in \cite{cot2017-MC,cgo2017-CALCOLO,gos2018,cgos2020} by augmenting the variational formulation with residuals arising from the constitutive equation and the Dirichlet boundary condition on the velocity. 
Then, we combine classical fixed-point arguments with the Lax--Milgram theorem to prove the well-posedness of both the continuous and discrete formulations.
In particular, for the continuous formulation, and under a smallness data assumption, we prove existence and uniqueness of solution by means of a fixed-point strategy where the Schauder (for existence) and Banach (for uniqueness) fixed-point theorems are employed.
As for the numerical scheme, whose solvability is established similarly to the continuous case but using the Brouwer fixed-point theorem instead of Schauder's for the existence result, we employ Raviart--Thomas spaces of order $k\geq 0$ for approximating the pseudostress tensor and continuous piecewise polynomials of degree $k + 1$ for the velocity.
In addition, applying an ad-hoc Strang-type lemma, we are able to derive the corresponding {\it a priori} error estimates and prove that the method is convergent with optimal rate. 

Next, we employ the {\it a posteriori} error analysis techniques developed in \cite{gms2010}, \cite{grt2016}, \cite{gos2018}, \cite{cgo2019-CMA}, \cite{cgo2019}, and \cite{cgoz2022} for augmented-mixed formulations in Hilbert spaces, and develop two reliable and efficient residual-based {\it a posteriori} error estimators in 2D and 3D for the present augmented-mixed finite element method. 
More precisely, in each case we derive a global quantity $\Theta$ that is expressed in terms of calculable local indicators $\Theta_T$ defined on each element $T$ of a given triangulation $\cT$. 
This information can be afterwards used to localize sources of error and construct an algorithm to efficiently adapt the mesh. 
In this way, the estimator $\Theta$ is said to be efficient (resp. reliable) if there exists a positive 
constant $C_{\tt eff}$ (resp. $C_{\tt rel}$), independent of the meshsizes, such that
\[
C_{\tt eff}\,\Theta \,+\, {\tt h.o.t.} \,\,\le\,\, \|\mathrm{error}\|\,\,\le\,\,
C_{\tt rel}\,\Theta \,+\, {\tt h.o.t.}\,,
\]
where ${\tt h.o.t.}$ is a generic expression denoting one or several terms of higher order.  
We observe that, up to our knowledge, the present work provides the first {\it a priori} and {\it a posteriori} error analysis of mixed finite element methods for the stationary convective Brinkman--Forchheimer equations.

This paper is organized as follows. The remainder of this section introduces some standard notations and functional spaces.
In Section \ref{sec:continuous-formulation} we introduce the model problem and derive its augmented 
mixed variational formulation.
Next, in Section \ref{sec:analysis-continuous-problem} we establish the well-posedness of this continuous scheme by means of a fixed-point strategy and Schauder and Banach fixed-point theorems. 
The Galerkin finite element approximation and its corresponding {\it a priori} analysis is developed in Section \ref{sec:Galerkin-scheme}. 
In Section \ref{eq:first-a-posteriori-error-estimator} we develop the {\it a posteriori} error analysis. 
In Section \ref{sec:reliability-Theta-1} we employ the uniform ellipticity of the bilinear form involved, a suitable Helmholtz decomposition, the local approximation properties of the Cl\'ement and Raviart--Thomas operators, to derive a reliable residual-based {\it a posteriori} error estimator. 
Then, inverse inequalities, and the localization technique based on element-bubble and edge-bubble functions are utilized in Section \ref{sec:efficiency-Theta-1} to prove the efficiency of the estimator. 
Several numerical results illustrating the performance of the proposed mixed finite element method, confirming the reliability and efficiency of the {\it a posteriori} error estimators, and showing the good performance of the associated adaptive algorithms, are presented in Section \ref{sec:numerical-results}.
Finally, other variables of interest that are recovered by a postprocessing are analyzed in Appendix \ref{sec:appendix-A},
while further properties to be utilized for the derivation of the reliability and efficiency estimates, are provided in Appendix \ref{sec:appendix-B}.
In turn, a second (also reliable and efficient) residual-based {\it a posteriori} error estimator is introduced and studied in Appendix \ref{sec:appendix-C}, where the Helmholtz decomposition is not employed in the corresponding proof of reliability.


\bigskip

\subsection*{Preliminary notations}

Let $\Omega\subset \R^d, d\in\{2,3\}$, be a bounded domain with polyhedral boundary $\Gamma$, 
and let $\bn$ be the outward unit normal vector on $\Gamma$. Standard notation will be adopted 
for Lebesgue spaces $\L^p(\Omega)$ and Sobolev spaces $\W^{s,p}(\Omega)$, with $s\in \R$ and $p > 1$, 
whose corresponding norms, either for the scalar, vectorial, or tensorial case, are denoted by 
$\|\cdot\|_{0,p;\Omega}$ and $\|\cdot\|_{s,p;\Omega}$, respectively. In particular, given a non-negative 
integer $m$, $\W^{m,2}(\Omega)$ is also denoted by $\H^m(\Omega)$, and the notations of its norm and 
seminorm are simplified to $\|\cdot\|_{m,\Omega}$ and $|\cdot|_{m,\Omega}$, respectively.
By $\bM$ and $\bbM$ we will denote the corresponding vectorial and tensorial counterparts of the 
generic scalar functional space $\M$, whereas $\M'$ denotes its dual space, whose norm is defined by $\|f\|_{\M'} :=  \ds\sup_{0\neq v\in \M} \frac{|f(v)|}{\|v\|_\M}$. 
In turn, for any vector fields $\bv=(v_i)_{i=1,d}$ and $\bw=(w_i)_{i=1,d}$, we set the gradient, 
divergence, and tensor product operators, as
\begin{equation*}
\nabla\bv:=\left(\frac{\partial v_i}{\partial x_j}\right)_{i,j=1,d},\quad 
\div (\bv):=\sum_{j=1}^d \frac{\partial v_j}{\partial x_j},\qan 
\bv\otimes\bw:=(v_i w_j)_{i,j=1,d}\,.
\end{equation*}
Furthermore, for any tensor fields $\btau=(\tau_{ij})_{i,j=1,d}$ 
and $\bzeta=(\zeta_{ij})_{i,j=1,d}$, we let $\bdiv(\btau)$ be the divergence operator $\div$ acting 
along the rows of $\btau$, and define the transpose, the trace, the tensor inner product, and the 
deviatoric tensor, respectively, as
\begin{equation*}
\btau^\rt := (\tau_{ji})_{i,j=1,d},\quad 
\tr(\btau) := \sum_{i=1}^d \tau_{ii},\quad
\btau:\bzeta := \sum_{i,j=1}^d \tau_{ij}\,\zeta_{ij},\qan 
\btau^\rd := \btau - \frac{1}{d}\,\tr(\btau)\,\bbI,
\end{equation*}
where $\bbI$ is the identity matrix in $\R^{d\times d}$. In what follows, when no confusion arises, 
$|\cdot|$ will denote the Euclidean norm in $\R^d$ or $\R^{d\times d}$. 
Additionally, we recall the Hilbert space
\begin{equation*}
\bbH(\bdiv;\Omega) := \Big\{\btau\in\bbL^2(\Omega):\quad \bdiv(\btau)\in\bL^ 2(\Omega)\Big\},
\end{equation*}
endowed with the usual norm $\|\btau\|^2_{\bdiv;\Omega} := \|\btau\|^2_{0,\Omega} + \|\bdiv(\btau)\|^2_{0,\Omega}$.
In addition, $\bH^{1/2}(\Gamma)$ is the space of traces of functions of $\bH^1(\Omega)$ and 
$\bH^{-1/2}(\Gamma)$ denotes its dual. Also, by $\pil\cdot,\cdot\pir_{\Gamma}$ we will denote 
the corresponding product of duality between $\bH^{-1/2}(\Gamma)$ and $\bH^{1/2}(\Gamma)$.


\section{The continuous formulation}\label{sec:continuous-formulation}

In this section we introduce the model problem and derive its corresponding weak formulation.

\subsection{The model problem}\label{sec:model-problem}

In what follows we consider the model analyzed in \cite{ll2019} (see also \cite{cku2005,zy2012,y2023}),
which is given by the stationary convective Brinkman--Forchheimer equations. 
More precisely, given a body force $\f$, we focus on finding a velocity 
field $\bu$ and a pressure field $p$, such that
\begin{subequations}\label{eq:convective-Brinkman-Forchheimer-1}
\begin{align}
\ds -\,\nu\,\Delta \bu + (\nabla\bu)\bu 
+ \alpha\,\bu + \tF\,|\bu|^{\rp-2}\bu + \nabla p & = \f \quad\,\,\mbox{ in}\quad \Omega\,, \label{eq:convective-Brinkman-Forchheimer-1a} \\[1ex]
\div(\bu) & =  0 \quad\,\,\,\, \mbox{in}\quad \Omega\,, \label{eq:convective-Brinkman-Forchheimer-1b} \\[1ex]
\bu & = \bu_\rD \quad \mbox{on}\quad \Gamma\,, \label{eq:convective-Brinkman-Forchheimer-1c}
\end{align}
\end{subequations}
where $\nu>0$ is the Brinkman coefficient (or the effective viscosity), $\alpha>0$ is the Darcy coefficient, $\tF>0$ is the Forchheimer coefficient, and $\rp$ is a given number, with $\rp\in [3,4]$.
Owing to the incompressibility of the fluid and the Dirichlet boundary condition for $\bu$, the datum $\bu_\rD\in \bH^{1/2}(\Gamma)$ must satisfy the compatibility condition
\begin{equation}\label{eq:compatibility-condition}
\int_{\Gamma} \bu_\rD\cdot\bn \,=\, 0 \,.
\end{equation}
In addition, due to \eqref{eq:convective-Brinkman-Forchheimer-1a}, and in order to guarantee uniqueness of the pressure, this unknown will be sought in the space
\begin{equation*}
\L^2_0(\Omega) \,:=\, \Big\{ q\in \L^2(\Omega) :\quad \int_{\Omega} q = 0 \Big\} \,.
\end{equation*}

Next, in order to derive a pseudostress-velocity mixed formulation for \eqref{eq:convective-Brinkman-Forchheimer-1},
in which the Dirichlet boundary conditions become natural ones, we now proceed as in \cite{cot2017-MC} (see similar approaches in \cite{cgot2016,cgor2017,gos2018}), and introduce as a further unknown the nonlinear pseudostress tensor $\bsi$, which is defined by
\begin{equation}\label{eq:sigma-definition}
\bsi := \nu\,\nabla\bu - (\bu\otimes\bu) - p\,\bbI\,.
\end{equation}
In this way, applying the matrix trace to the tensor $\bsi$ and utilizing the incompressibility condition \eqref{eq:convective-Brinkman-Forchheimer-1b}, one arrives at
\begin{equation}\label{eq:pressure}
p = -\frac{1}{d}\,\tr(\bsi + \bu\otimes\bu)\,.
\end{equation}
Hence, replacing back \eqref{eq:pressure} into \eqref{eq:sigma-definition},
we find that \eqref{eq:convective-Brinkman-Forchheimer-1} can be rewritten, equivalently, 
as follows: Find $(\bsi,\bu)$ in suitable spaces to be indicated below such that
\begin{subequations}\label{eq:convective-Brinkman-Forchheimer-2}
\begin{align}
\ds \frac{1}{\nu}\,\bsi^\rd + \frac{1}{\nu}\,(\bu\otimes \bu)^\rd & = \nabla\bu \quad \mbox{in}\quad \Omega\,, \label{eq:convective-Brinkman-Forchheimer-2a} \\[1ex]
\ds \alpha\,\bu + \tF\,|\bu|^{\rp-2}\bu - \bdiv(\bsi) & = \f \quad\,\,\,\,\,\, \mbox{in}\quad \Omega\,, \label{eq:convective-Brinkman-Forchheimer-2b} \\[1ex]
\bu & = \bu_\rD \quad\, \mbox{on}\quad \Gamma\,, \label{eq:convective-Brinkman-Forchheimer-2c} \\[1ex] 
\ds \int_{\Omega} \tr(\bsi + \bu\otimes\bu) & =  0\,. \label{eq:convective-Brinkman-Forchheimer-2d}
\end{align}
\end{subequations}
At this point we stress that, as suggested by \eqref{eq:pressure}, $p$ is eliminated from the
present formulation and computed afterwards in terms of $\bsi$ and $\bu$ by using that identity (see Appendix \ref{sec:appendix-A} for details).
This fact, justifies \eqref{eq:convective-Brinkman-Forchheimer-2d},
which aims to ensure that the resulting pressure does belong to $\L^2_0(\Omega)$.

\subsection{The variational formulation}\label{sec:variational-formulation}

In this section we derive the mixed variational formulation 
for the problem given by \eqref{eq:convective-Brinkman-Forchheimer-2}.
To that end, we multiply \eqref{eq:convective-Brinkman-Forchheimer-2a}
by a tensor $\btau\in \bbH(\bdiv;\Omega)$, integrate the resulting expression
by parts, and use the identity $\bsi^\rd:\btau = \bsi^\rd:\btau^\rd$ and
the Dirichlet boundary condition \eqref{eq:convective-Brinkman-Forchheimer-2c}, to get
\begin{equation}\label{eq:CBF-weak-formulation-1}
\frac{1}{\nu} \int_\Omega \bsi^\rd:\btau^\rd
+ \int_\Omega \bu\cdot\bdiv(\btau) 
+ \frac{1}{\nu} \int_\Omega (\bu\otimes\bu)^\rd:\btau 
\,=\, \pil\btau\bn,\bu_\rD\pir_\Gamma \quad \forall\,\btau\in \bbH(\bdiv;\Omega)\,.
\end{equation}
In order to have more flexibility for choosing the finite element subspaces, 
but at the same time avoiding the incorporation of new terms in the resulting variational 
equation, we now proceed similarly as in \cite{ggm2014} (see also \cite{gos2018,cgos2020}), 
and replace $\bu$ in the second term of the left-hand side of \eqref{eq:CBF-weak-formulation-1}
by the expression arising from \eqref{eq:convective-Brinkman-Forchheimer-2b}, 
that is
\begin{equation*}
\bu \,=\, \frac{1}{\alpha}\,\Big\{ \, \bdiv(\bsi) - \tF\,|\bu|^{\rp-2}\bu + \f \, \Big\} \qin \Omega\,.
\end{equation*}
In this way, we arrive at the variational formulation:
Find $\bsi\in \bbH(\bdiv;\Omega)$ and $\bu$ (in a suitable space to be specified below), such that
\begin{equation}\label{eq:CBF-weak-formulation-2}
\begin{array}{l}
\ds \frac{1}{\nu}\int_\Omega \bsi^\rd:\btau^\rd 
+ \frac{1}{\alpha} \int_\Omega \bdiv(\bsi)\cdot\bdiv(\btau) 
+ \frac{1}{\nu} \int_\Omega (\bu\otimes\bu)^\rd:\btau 
- \frac{\tF}{\alpha} \int_\Omega |\bu|^{\rp-2}\bu\cdot\bdiv(\btau) \\[3ex]
\ds\quad =\,  
- \frac{1}{\alpha} \int_\Omega \f\cdot\bdiv(\btau) 
+ \pil\btau\bn,\bu_\rD\pir_\Gamma
\quad \forall\,\btau\in \bbH(\bdiv;\Omega)\,.
\end{array}
\end{equation}
Since $\btau\in \bbH(\bdiv;\Omega)$ and $\rp\in [3,4]$,
the terms $(\bu\otimes\bu)^\rd:\btau$ and $|\bu|^{\rp-2}\bu\cdot\bdiv(\btau)$ forces the velocity $\bu$, and 
consequently the test function $\bv$, to live in $\bL^{\rq}(\Omega)$, with $\rq=2(\rp-1)\in [4,6]$.
In order to deal with this fact, we first observe, applying Cauchy--Schwarz 
and H\"older's inequalities, and then the continuous injection $\bi_4$ (resp. $\bi_{\rq}$) of $\bH^1(\Omega)$ 
into $\bL^4(\Omega)$ (resp. $\bL^{\rq}(\Omega)$) (cf. \cite[Theorem~1.3.4]{Quarteroni-Valli}), that
\begin{equation}\label{eq:continuous-injection-H1-L4}
\left| \int_\Omega (\bw\otimes\bu)^\rd:\btau \right| 
\,\leq\, \|\bw\|_{0,4;\Omega}\,\|\bu\|_{0,4;\Omega} \,\|\btau\|_{0,\Omega}
\,\leq\, \|\bi_4\|^2\, \|\bw\|_{1,\Omega}\,\|\bu\|_{1,\Omega}\,\|\btau\|_{\bdiv;\Omega}
\end{equation}
and
\begin{equation}\label{eq:continuous-injection-H1-Lq}
\left| \int_\Omega |\bw|^{\rp-2}\bu\cdot\bdiv(\btau) \right| 
\,\leq\, \|\bw\|^{\rp-2}_{0,\rq;\Omega}\,\|\bu\|_{0,\rq;\Omega} \,\|\bdiv(\btau)\|_{0,\Omega}
\,\leq\, \|\bi_{\rq}\|^{\rp-1}\, \|\bw\|^{\rp-2}_{1,\Omega}\,\|\bu\|_{1,\Omega}\,\|\btau\|_{\bdiv;\Omega}\,,
\end{equation}
for all $\bw,\bu\in\bH^1(\Omega)$ and $\btau\in\bbH(\bdiv;\Omega)$, 
where $\|\bi_4\|$ (resp. $\|\bi_{\rq}\|$) is the norm of the injection of $\bH^1(\Omega)$ into $\bL^4(\Omega)$ (resp. $\bL^{\rq}(\Omega)$).
However, we notice from \eqref{eq:CBF-weak-formulation-2} that the lack of a 
test function in the space where $\bu$ lives (now in $\bH^1(\Omega)$), makes 
the well-posedness analysis of \eqref{eq:CBF-weak-formulation-2} non-viable.
Then, aiming to circumvent this inconvenient, we propose to enrich our formulation 
with the following residual terms arising from the constitutive equation \eqref{eq:convective-Brinkman-Forchheimer-2a} and the Dirichlet boundary 
condition \eqref{eq:convective-Brinkman-Forchheimer-2c}:
\begin{equation}\label{eq:augmented-terms}
\kappa_1\int_\Omega \Big\{ \nabla\bu - \frac{1}{\nu}\,\bsi^\rd - \frac{1}{\nu}\,(\bu\otimes\bu)^\rd \Big\}:\nabla\bv = 0\qan
\kappa_2 \int_\Gamma \bu\cdot\bv = \kappa_2 \int_\Gamma \bu_\rD\cdot\bv \quad \forall\,\bv\in \bH^1(\Omega)\,, 
\end{equation}
where $\kappa_1$ and $\kappa_2$ are positive parameters 
to be specified later. 
According to the previous analysis, the weak formulation of the convective Brinkman--Forchheimer
problem \eqref{eq:convective-Brinkman-Forchheimer-2} reduces at first instance to:
Find $(\bsi,\bu)\in \bbH(\bdiv;\Omega)\times \bH^1(\Omega)$ such that \eqref{eq:convective-Brinkman-Forchheimer-2d}, \eqref{eq:CBF-weak-formulation-2} and \eqref{eq:augmented-terms} hold, for all $(\btau,\bv)\in \bbH(\bdiv;\Omega)\times \bH^1(\Omega)$.

However, for convenience of the subsequent analysis, we consider the decomposition (see, for instance, \cite{Gatica,Girault-Raviart}) 
\begin{equation*}
\bbH(\bdiv;\Omega) \,=\, \bbH_0(\bdiv;\Omega)\oplus \R\,\bbI\,,
\end{equation*}
where
\begin{equation*}
\bbH_0(\bdiv;\Omega) \,:=\, \Big\{ \btau\in\bbH(\bdiv;\Omega) :\quad \int_\Omega \tr(\btau) = 0 \Big\}\,.
\end{equation*}
More precisely, each $\btau\in \bbH(\bdiv;\Omega)$ can be decomposed uniquely as:
\begin{equation*}
\btau = \btau_0 + \jmath\,\bbI\,,\quad 
\mbox{ with } \btau_0\in \bbH_0(\bdiv;\Omega) \qan
\jmath := \frac{1}{d\,|\Omega|} \int_\Omega \tr(\btau) \in \R\,.
\end{equation*}
In particular, using from \eqref{eq:convective-Brinkman-Forchheimer-2d} that $\int_\Omega \tr(\bsi) = - \int_\Omega \tr(\bu\otimes\bu)$, we obtain
\begin{equation}\label{eq:sigma-decomposition}
\bsi = \bsi_0 + \ell\,\bI \quad\mbox{with} \quad \bsi_0\in \bbH_0(\bdiv;\Omega) \qan \ell:=-\frac{1}{d\,|\Omega|} \int_\Omega \tr(\bu\otimes\bu)\,,
\end{equation}
which says that $\ell$ is know explicitly in terms of $\bu$.
Therefore, in order to fully determine $\bsi$, it only remains to find its $\bbH_0(\bdiv;\Omega)$-component $\bsi_0$.
Moreover, noticing that $\btau^\rd = \btau^\rd_0$ 
and $\bdiv(\btau) = \bdiv(\btau_0)$, and using the compatibility condition 
\eqref{eq:compatibility-condition}, we deduce that both $\bsi_0$ and $\btau$ 
can be considered hereafter in $\bbH_0(\bdiv;\Omega)$.
Hence, bearing in mind the foregoing discussion, we rename $\bsi_0$ simply as $\bsi$ 
and arrive at the following mixed
formulation for the convective Brinkman--Forchheimer equations: 
Find $(\bsi,\bu)\in \bbH_0(\bdiv;\Omega)\times \bH^1(\Omega)$, such that
\begin{equation}\label{eq:CBF-augmented-weak-formulation}
\bA_{\bu}((\bsi,\bu),(\btau,\bv)) \,=\, \bF(\btau,\bv) \quad \forall\,(\btau,\bv)\in \bbH_0(\bdiv;\Omega)\times \bH^1(\Omega)\,,
\end{equation}
where, given $\bw\in \bH^1(\Omega)$, the bilinear form $\bA_{\bw}:(\bbH_0(\bdiv;\Omega)\times \bH^1(\Omega))\times (\bbH_0(\bdiv;\Omega)\times \bH^1(\Omega))\to \R$ is defined by
\begin{equation}\label{eq:bilinear-form-Aw}
\bA_\bw((\bzeta,\bz),(\btau,\bv)) := \bA((\bzeta,\bz),(\btau,\bv)) + \bB_\bw((\bzeta,\bz),(\btau,\bv))\,,
\end{equation}
with
\begin{equation}\label{eq:bilinear-form-A}
\bA((\bzeta,\bz),(\btau,\bv)) := \frac{1}{\nu}\int_\Omega \bzeta^\rd:\btau^\rd 
+ \frac{1}{\alpha} \int_\Omega \bdiv(\bzeta)\cdot\bdiv(\btau) + \kappa_1\int_\Omega \Big\{ \nabla\bz - \frac{1}{\nu}\bzeta^\rd \Big\}:\nabla\bv + \kappa_2 \int_\Gamma \bz\cdot\bv
\end{equation}
and
\begin{equation}\label{eq:bilinear-form-Bw}
\bB_\bw((\bzeta,\bz),(\btau,\bv)) :=
\frac{1}{\nu} \int_\Omega (\bw\otimes\bz)^\rd:\Big\{ \btau - \kappa_1\,\nabla\bv \Big\}
-\,\frac{\tF}{\alpha} \int_\Omega |\bw|^{\rp-2}\bz\cdot\bdiv(\btau)\,,
\end{equation}
for all $(\bzeta,\bz), (\btau,\bv)\in \bbH_0(\bdiv;\Omega)\times \bH^1(\Omega)$.
In turn, $\bF\in (\bbH_0(\bdiv;\Omega)\times \bH^1(\Omega))'$ is defined by
\begin{equation}\label{eq:functional-F}
\bF(\btau,\bv) := 
-\frac{1}{\alpha} \int_\Omega \f\cdot\bdiv(\btau)
+ \pil\btau\bn,\bu_\rD\pir_\Gamma 
+ \kappa_2 \int_\Gamma \bu_\rD\cdot\bv \,.
\end{equation}


\section{Analysis of the continuous problem}\label{sec:analysis-continuous-problem}

In this section we combine the Lax--Milgram theorem with the classical Schauder and
Banach fixed-point theorems, to prove the well-posedness of \eqref{eq:CBF-augmented-weak-formulation} 
under suitable smallness assumptions on the data.

\subsection{Preliminary results}

We begin by discussing the stability properties of the forms involved 
in \eqref{eq:CBF-augmented-weak-formulation}.
To that end, and for the sake of clarity, we set the notation
\begin{equation*}
\|(\btau,\bv)\|^2 \,:=\, \|\btau\|^2_{\bdiv;\Omega} + \|\bv\|^2_{1,\Omega} \quad \forall\,(\btau,\bv)\in \bbH_0(\bdiv;\Omega)\times \bH^1(\Omega)\,.
\end{equation*}
Next, given $\bw\in \bH^1(\Omega)$, using \eqref{eq:continuous-injection-H1-L4}--\eqref{eq:continuous-injection-H1-Lq}
and performing simple computations, we deduce from \eqref{eq:bilinear-form-A} 
and \eqref{eq:bilinear-form-Bw} that the bilinear forms $\bA$ and $\bB_\bw$, 
are bounded as indicated in what follows  
\begin{equation}\label{eq:continuity-A}
\big| \bA((\bzeta,\bz),(\btau,\bv)) \big| \,\leq\, C_\bA\,\|(\bzeta,\bz)\|\|(\btau,\bv)\| \qan
\end{equation}
\begin{equation}\label{eq:continuity-Bw}
\big| \bB_\bw((\bzeta,\bz),(\btau,\bv)) \big| 
\,\leq\, \left(\frac{1}{\nu}\,(1+\kappa_1)\,\|\bi_4\|^2\,\|\bw\|_{1,\Omega} + \frac{\tF}{\alpha}\,\|\bi_{\rq}\|^{\rp-1}\,\|\bw\|^{\rp-2}_{1,\Omega}\right)
\|\bz\|_{1,\Omega}\,\|(\btau,\bv)\| \,,
\end{equation}
where $C_\bA$ is a positive constant depending on $\nu, \alpha, \kappa_1$, and $\kappa_2$.
In addition, employing Cauchy--Schwarz's inequality, the continuity of 
the normal trace of $\bbH(\bdiv;\Omega)$ (cf. \cite[Theorem 1.7]{Gatica}) and
the trace inequality (see, e.g., \cite[Theorem 1.5]{Gatica}): $\|\bv\|_{0,\Gamma} \,\leq\, C_\Gamma\,\|\bv\|_{1,\Omega} \quad \forall\,\bv\in \bH^1(\Omega)$,
it is readily seen that $\bF$ (cf. \eqref{eq:functional-F}) is bounded:
\begin{equation}\label{eq:continuity-F}
| \bF(\btau,\bv)| 
\,\leq\, C_\bF\,\Big\{ \|\f\|_{0,\Omega} + \|\bu_\rD\|_{1/2,\Gamma} + \|\bu_\rD\|_{0,\Gamma} \Big\}\,\|(\btau,\bv)\| \,,
\end{equation}
with $C_\bF := \max\big\{1,1/\alpha,\kappa_2\,C_\Gamma \big\}$.
On the other hand, for later use, we recall from \cite[Lemma 5.3]{gm1975} that for all $p>2$ and $\bz,\by\in \R^d$ there exists a constant $c_p>0$ independent of $\bz$ and $\by$, such that
\begin{equation}\label{eq:glowinsky}
\big| |\bz|^{p-2}\bz - |\by|^{p-2}\by \big|
\,\leq\, c_p\,\big(|\bz| + |\by|\big)^{p-2} |\bz - \by| \,.
\end{equation}

Finally, we recall that there exist positive constants 
$c_1(\Omega)$ and $c_2(\Omega)$, such that (see \cite[Lemma~2.3]{Gatica} and \cite[Theorem~5.11.2]{kjf1977}, respectively, for details)
\begin{equation}\label{eq:tau-d-div-tau-inequality}
\|\btau^\rd\|^2_{0,\Omega} + \|\bdiv(\btau)\|^2_{0,\Omega} 
\,\geq\, c_1(\Omega)\,\|\btau\|^2_{0,\Omega} \quad \forall\, \btau\in \bbH_0(\bdiv;\Omega)
\end{equation}
\begin{equation}\label{eq:grad-v-v-inequality}
\qan \|\nabla\bv\|^2_{0,\Omega} + \|\bv\|^2_{0,\Gamma} 
\,\geq\, c_2(\Omega)\,\|\bv\|^2_{1,\Omega} \quad \forall\, \bv\in \bH^1(\Omega).
\end{equation}

Then, we establish next the ellipticity of the bilinear form $\bA$.
\begin{lem}\label{lem:Ellipticity-A}
Assume that $\kappa_1\in (0,2\,\nu)$ and $\kappa_2\in (0,+\infty)$. 
Then, there exists $\alpha_\bA > 0$, such that there holds
\begin{equation}\label{eq:ellipticity-A}
\bA((\btau,\bv),(\btau,\bv)) \,\geq\, \alpha_\bA\,\|(\btau,\bv)\|^2 \quad 
\forall\,(\btau,\bv)\in \bbH_0(\bdiv;\Omega)\times \bH^1(\Omega)\,.
\end{equation}
\end{lem}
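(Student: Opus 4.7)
The plan is to evaluate $\bA$ on the diagonal, control the indefinite cross term coming from $-\tfrac{\kappa_1}{\nu}\int_\Omega \bzeta^{\rd}\!:\!\nabla\bv$ by a Young-type estimate, and then recover the two norms $\|\btau\|_{\bdiv;\Omega}$ and $\|\bv\|_{1,\Omega}$ via the inequalities \eqref{eq:tau-d-div-tau-inequality} and \eqref{eq:grad-v-v-inequality}. Setting $(\bzeta,\bz)=(\btau,\bv)$ in \eqref{eq:bilinear-form-A} yields
\begin{equation*}
\bA((\btau,\bv),(\btau,\bv)) \,=\, \frac{1}{\nu}\|\btau^\rd\|_{0,\Omega}^2 + \frac{1}{\alpha}\|\bdiv(\btau)\|_{0,\Omega}^2 + \kappa_1\|\nabla\bv\|_{0,\Omega}^2 - \frac{\kappa_1}{\nu}\int_\Omega \btau^\rd\!:\!\nabla\bv + \kappa_2\|\bv\|_{0,\Gamma}^2 .
\end{equation*}

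Next, I would bound the cross term using Cauchy--Schwarz and Young's inequality with a free parameter $\delta>0$:
\begin{equation*}
\left|\frac{\kappa_1}{\nu}\int_\Omega \btau^\rd\!:\!\nabla\bv\right| \,\le\, \frac{\kappa_1}{2\nu\delta}\|\btau^\rd\|_{0,\Omega}^2 + \frac{\kappa_1\delta}{2\nu}\|\nabla\bv\|_{0,\Omega}^2 .
\end{equation*}
Substituting, the coefficients in front of $\|\btau^\rd\|_{0,\Omega}^2$ and $\|\nabla\bv\|_{0,\Omega}^2$ become, respectively, $\tfrac{1}{\nu}-\tfrac{\kappa_1}{2\nu\delta}$ and $\kappa_1-\tfrac{\kappa_1\delta}{2\nu}$. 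Both are strictly positive provided $\delta\in(\kappa_1/2,\,2\nu)$, an interval that is non-empty exactly under the hypothesis $\kappa_1\in(0,2\nu)$; this is the key place where the smallness of $\kappa_1$ is used. I would pick, for definiteness, $\delta$ in the middle of this interval (the specific value only affects the final constant).

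With positive constants $c_\btau,c_\bv>0$ in front of $\|\btau^\rd\|_{0,\Omega}^2$ and $\|\nabla\bv\|_{0,\Omega}^2$, I would then split $\tfrac{1}{\alpha}\|\bdiv(\btau)\|_{0,\Omega}^2$ into two halves: combine one half with $c_\btau\|\btau^\rd\|_{0,\Omega}^2$ and apply \eqref{eq:tau-d-div-tau-inequality} to produce $\min\{c_\btau,1/(2\alpha)\}\,c_1(\Omega)\,\|\btau\|_{0,\Omega}^2$; retain the other half to recover $\|\bdiv(\btau)\|_{0,\Omega}^2$, which together give a lower bound of the form $C_1\,\|\btau\|_{\bdiv;\Omega}^2$. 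Analogously, pair $c_\bv\|\nabla\bv\|_{0,\Omega}^2$ with $\kappa_2\|\bv\|_{0,\Gamma}^2$ and invoke \eqref{eq:grad-v-v-inequality} to get $C_2\,\|\bv\|_{1,\Omega}^2$.

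Combining both estimates yields $\bA((\btau,\bv),(\btau,\bv))\ge \alpha_\bA\,\|(\btau,\bv)\|^2$ with $\alpha_\bA:=\min\{C_1,C_2\}>0$, which is the desired inequality \eqref{eq:ellipticity-A}. The only delicate step is the calibration of $\delta$: it must lie in $(\kappa_1/2,2\nu)$, so the assumption $\kappa_1<2\nu$ is not an artefact of the argument but is genuinely what makes the cross term absorbable. The rest of the proof is a bookkeeping exercise once one decides how to partition the divergence norm between the two inequalities \eqref{eq:tau-d-div-tau-inequality}--\eqref{eq:grad-v-v-inequality}.
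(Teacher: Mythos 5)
Your proof is correct and follows essentially the same route as the paper's: Young's inequality to absorb the cross term $-\tfrac{\kappa_1}{\nu}\int_\Omega\btau^\rd:\nabla\bv$ (the paper in effect takes your $\delta=\nu$, which yields the coefficients $\tfrac{1}{\nu}\big(1-\tfrac{\kappa_1}{2\nu}\big)$ and $\tfrac{\kappa_1}{2}$), followed by \eqref{eq:tau-d-div-tau-inequality} and \eqref{eq:grad-v-v-inequality} with the divergence term split in half exactly as you describe. One small quibble with your closing remark: the interval $(\kappa_1/2,\,2\nu)$ is non-empty precisely when $\kappa_1<4\nu$, not $\kappa_1<2\nu$, so your free-parameter argument in fact establishes ellipticity on a slightly larger range than the stated hypothesis; the assumption $\kappa_1\in(0,2\nu)$ is sufficient for your proof but is not ``exactly'' what it requires.
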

\begin{proof}
Let $(\btau,\bv)\in \bbH_0(\bdiv;\Omega)\times \bH^1(\Omega)$.	
Then, from the definition of $\bA$ (cf. \eqref{eq:bilinear-form-A}), using Young's inequality 
and simple algebraic computations, we find that
\begin{equation*}
\bA((\btau,\bv),(\btau,\bv)) \,\geq\, \frac{1}{\nu}\left(1 - \frac{\kappa_1}{2\,\nu}\right)\,\|\btau^\rd\|^2_{0,\Omega} 
+ \frac{1}{\alpha}\,\|\bdiv(\btau)\|^2_{0,\Omega} 
+ \frac{\kappa_1}{2}\,\|\nabla\bv\|^2_{0,\Omega} + \kappa_2\,\|\bv\|^2_{0,\Gamma} \,. 
\end{equation*}
Then, assuming the stipulated ranges on $\kappa_1$ and $\kappa_2$,
and applying inequalities \eqref{eq:tau-d-div-tau-inequality} and \eqref{eq:grad-v-v-inequality},
we can define the positive constants
\begin{equation*}
\alpha_0 := \min\left\{ \frac{1}{\nu}\left(1 - \frac{\kappa_1}{2\,\nu}\right), \frac{1}{2\,\alpha} \right\} \,,\quad
\alpha_1 := \min\left\{ \alpha_0\,c_1(\Omega), \frac{1}{2\,\alpha} \right\} \,,\qan
\alpha_2 := c_2(\Omega)\,\min\left\{ \frac{\kappa_1}{2}, \kappa_2 \right\} \,,
\end{equation*}
which allows us to conclude \eqref{eq:ellipticity-A} with $\alpha_\bA := \min\{ \alpha_1, \alpha_2 \}$.
\end{proof}

\begin{rem}\label{rem:kappa-parameters}
We note that for computational purposes, and in order to maximize 
the ellipticity constant $\alpha_{\bA}$ (cf. \eqref{eq:ellipticity-A}), 
we can choose explicitly the parameter $\kappa_1$ and $\kappa_2$ by taking 
$\kappa_1$ as the middle point of its feasible range and $\kappa_2 \ge \frac{\kappa_1}{2}$.
More precisely, we can simply take
\begin{equation*}
\kappa_1 = \nu \qan \kappa_2 \ge \frac{\nu}{2}\,.
\end{equation*}
\end{rem}

\subsection{A fixed point strategy}

We begin the solvability analysis of \eqref{eq:CBF-augmented-weak-formulation} by defining the operator
$\bT:\bH^1(\Omega)\to \bH^1(\Omega)$ by
\begin{equation}\label{eq:operator-T}
\bT(\bw) \,:=\, \bu \quad \forall\,\bw\in \bH^1(\Omega) \,,
\end{equation}
where $\bu$ is the second component of the unique solution (to be confirmed below) 
of the problem: Find $(\bsi, \bu)\in \bbH_0(\bdiv;\Omega)\times \bH^1(\Omega)$ such that 
\begin{equation}\label{eq:CBF-augmented-weak-formulation-T}
\bA_{\bw}((\bsi,\bu),(\btau,\bv)) \,=\, \bF(\btau,\bv) \quad \forall (\btau,\bv)\in \bbH_0(\bdiv;\Omega)\times \bH^1(\Omega) \,.
\end{equation} 
Hence, it is not difficult to see that $(\bsi,\bu)\in \bbH_0(\bdiv;\Omega)\times \bH^1(\Omega)$
is a solution of \eqref{eq:CBF-augmented-weak-formulation} if and only if $\bu\in \bH^1(\Omega)$ is a fixed-point of $\bT$, that is
\begin{equation}\label{eq:fixed-point-T}
\bT(\bu) \,=\, \bu \,.
\end{equation}

In this way, in what follows we focus on proving that $\bT$ possesses a unique fixed-point.
However, we remark in advance that the definition of $\bT$ will make sense only in a closed ball of $\bH^1(\Omega)$.

We begin by establishing a result that provides sufficient conditions under which the operator $\bT$ (cf. \eqref{eq:operator-T}) is well-defined, or equivalently, the problem \eqref{eq:CBF-augmented-weak-formulation-T} is well-posed.
\begin{lem}\label{lem:T-well-defined}
Assume $\kappa_1$ and $\kappa_2$ as in Lemma \ref{lem:Ellipticity-A}.
Let $r\in (0,r_0]$, with $r_0 \,=\, \min\left\{ r_1, r_2 \right\}$, and
\begin{equation}\label{eq:radius-r0-r1-r2}
r_1 \,=\, \frac{\nu\,\alpha_\bA}{4\,(1 + \kappa_1)\,\|\bi_4\|^2} \qan
r_2 \,=\, \left(\frac{\alpha\,\alpha_\bA}{4\,\tF\,\|\bi_{\rq}\|^{\rp-1}}\right)^{1/(\rp-2)}\,, 
\end{equation} 
and let $\f\in \bL^2(\Omega)$ and $\bu_\rD\in \bH^{1/2}(\Gamma)$.
Then, the problem \eqref{eq:CBF-augmented-weak-formulation-T} has 
a unique solution $(\bsi,\bu)\in \bbH_0(\bdiv;\Omega)\times \bH^1(\Omega)$ 
for each $\bw\in \bH^1(\Omega)$ such that $\|\bw\|_{1,\Omega} \leq r$.
Moreover, there holds
\begin{equation}\label{eq:Tw-bound}
\|\bT(\bw)\|_{1,\Omega} \,=\, \|\bu\|_{1,\Omega} 
\,\leq\,\|(\bsi,\bu)\|
\,\leq\, \frac{2\,C_{\bF}}{\alpha_\bA}\,\Big\{ \|\f\|_{0,\Omega} + \|\bu_\rD\|_{1/2,\Gamma} + \|\bu_\rD\|_{0,\Gamma} \Big\} \,,
\end{equation}
with $C_\bF$ and $\alpha_\bA$ satisfying \eqref{eq:continuity-F} and \eqref{eq:ellipticity-A}, respectively.
\end{lem}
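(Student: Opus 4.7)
The plan is to recognize that problem \eqref{eq:CBF-augmented-weak-formulation-T} is a linear variational problem in the unknown $(\bsi,\bu)$ with bilinear form $\bA_\bw$ and right-hand side $\bF$, both defined on the Hilbert space $\bbH_0(\bdiv;\Omega)\times \bH^1(\Omega)$. Hence the natural tool is the Lax--Milgram theorem, and the only nontrivial item to check is that, for $\bw$ in the stated ball, the perturbed form $\bA_\bw = \bA + \bB_\bw$ inherits ellipticity from $\bA$. Boundedness of $\bA_\bw$ is immediate from \eqref{eq:continuity-A}--\eqref{eq:continuity-Bw}, and boundedness of $\bF$ was already recorded in \eqref{eq:continuity-F}.

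For ellipticity, first I would test $\bB_\bw$ on the diagonal, taking $(\bzeta,\bz) = (\btau,\bv)$ in \eqref{eq:continuity-Bw}, which yields
\begin{equation*}
\bigl|\bB_\bw((\btau,\bv),(\btau,\bv))\bigr|
\,\leq\, \left(\frac{(1+\kappa_1)\,\|\bi_4\|^2}{\nu}\,\|\bw\|_{1,\Omega}
+ \frac{\tF\,\|\bi_\rq\|^{\rp-1}}{\alpha}\,\|\bw\|^{\rp-2}_{1,\Omega}\right)\|(\btau,\bv)\|^2 \,.
\end{equation*}
Combining this with the ellipticity \eqref{eq:ellipticity-A} of $\bA$ from Lemma~\ref{lem:Ellipticity-A}, I would require the coefficient in parentheses to be at most $\alpha_\bA/2$. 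Splitting this budget equally between the two summands forces precisely $\|\bw\|_{1,\Omega} \leq r_1$ and $\|\bw\|_{1,\Omega} \leq r_2$ as defined in \eqref{eq:radius-r0-r1-r2}, i.e. $\|\bw\|_{1,\Omega} \leq r_0$. Under this constraint one obtains
\begin{equation*}
\bA_\bw((\btau,\bv),(\btau,\bv)) \,\geq\, \frac{\alpha_\bA}{2}\,\|(\btau,\bv)\|^2
\quad \forall\,(\btau,\bv)\in \bbH_0(\bdiv;\Omega)\times \bH^1(\Omega) \,.
\end{equation*}

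Having ellipticity with constant $\alpha_\bA/2$ and continuity, the Lax--Milgram theorem delivers a unique solution $(\bsi,\bu)\in \bbH_0(\bdiv;\Omega)\times \bH^1(\Omega)$ to \eqref{eq:CBF-augmented-weak-formulation-T}, so that $\bT(\bw) := \bu$ is well-defined. The a priori estimate then follows from the standard chain
\begin{equation*}
\frac{\alpha_\bA}{2}\,\|(\bsi,\bu)\|^2 \,\leq\, \bA_\bw((\bsi,\bu),(\bsi,\bu)) \,=\, \bF(\bsi,\bu) \,\leq\, \|\bF\|\,\|(\bsi,\bu)\| \,,
\end{equation*}
and using $\|\bu\|_{1,\Omega}\leq \|(\bsi,\bu)\|$ together with \eqref{eq:continuity-F} produces exactly \eqref{eq:Tw-bound}.

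The only subtle point is the algebra leading to the two threshold radii: one must correctly handle the nonlinear power $\|\bw\|_{1,\Omega}^{\rp-2}$ coming from the Forchheimer contribution in $\bB_\bw$, which is why the second radius $r_2$ is raised to the power $1/(\rp-2)$. Everything else—boundedness, ellipticity of $\bA$, and the duality/trace bounds for $\bF$—is already available from Section~3.1 and requires no additional ideas.
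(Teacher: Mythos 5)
Your proposal is correct and follows essentially the same route as the paper: bound $\bB_\bw$ on the diagonal via \eqref{eq:continuity-Bw}, absorb it into the ellipticity constant of $\bA$ by requiring each of the two summands to be at most $\alpha_\bA/4$ (which is exactly how the radii $r_1$ and $r_2$ in \eqref{eq:radius-r0-r1-r2} arise), then invoke Lax--Milgram and test with $(\bsi,\bu)$ to obtain \eqref{eq:Tw-bound}. No gaps.
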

\begin{proof}
First, given $\bw\in \bH^1(\Omega)$, we observe from \eqref{eq:bilinear-form-Aw}, \eqref{eq:bilinear-form-A} and \eqref{eq:bilinear-form-Bw} that 
$\bA_\bw, \bA$, and $\bB_\bw$ are clearly bilinear forms.
Then, using the ellipticity of $\bA$ and the continuity bound of $\bB_{\bw}$ (cf. \eqref{eq:ellipticity-A}, \eqref{eq:continuity-Bw}), we deduce that for all $(\btau,\bv)\in \bbH_0(\bdiv;\Omega)\times \bH^1(\Omega)$ there holds 
\begin{equation*}
\bA_\bw((\btau,\bv),(\btau,\bv)) 
\,\geq\, \left\{\alpha_\bA - \left(\frac{1}{\nu}\,(1 + \kappa_1)\,\|\bi_4\|^2\|\bw\|_{1,\Omega} + \frac{\tF}{\alpha}\,\|\bi_{\rq}\|^{\rp-1}\|\bw\|^{\rp-2}_{1,\Omega}\right)\right\}\|(\btau,\bv)\|^2 \,. 
\end{equation*}
Consequently, requiring now $\|\bw\|_{1,\Omega} \leq r_0$, with $r_0:=\min\{r_1,r_2\}$
and $r_1,r_2$ as in \eqref{eq:radius-r0-r1-r2}, we get 
\begin{equation}\label{eq:two-bounds-for-w}
\frac{1}{\nu}\,(1 + \kappa_1)\,\|\bi_4\|^2\|\bw\|_{1,\Omega} \,\leq\, \frac{\alpha_\bA}{4} \qan
\frac{\tF}{\alpha}\|\bi_{\rq}\|^{\rp-1}\|\bw\|^{\rp-2}_{1,\Omega} \,\leq\, \frac{\alpha_\bA}{4}\,,
\end{equation}
which yields
\begin{equation}\label{eq:ellipticity-Aw}
\bA_\bw((\btau,\bv),(\btau,\bv)) 
\,\geq\, \frac{\alpha_\bA}{2}\,\|(\btau,\bv)\|^2 
\quad \forall\, (\btau,\bv)\in \bbH_0(\bdiv;\Omega)\times \bH^1(\Omega) \,.
\end{equation}
In turn, using \eqref{eq:continuity-A}, \eqref{eq:continuity-Bw}, and \eqref{eq:two-bounds-for-w}, we deduce that $\bA_{\bw}$ is bounded as follows
\begin{equation}\label{eq:continuity-Aw}
\big| \bA_\bw((\bzeta,\bz),(\btau,\bv)) \big| 
\,\leq\, \Big( C_\bA + \frac{\alpha_\bA}{2} \Big)\,\|(\bzeta,\bz)\|\,\|(\btau,\bv)\| \,,
\end{equation}
for all $(\bzeta,\bz), (\btau,\bv)\in \bbH_0(\bdiv;\Omega)\times \bH^1(\Omega)$.

Summing up, and owing to the hypotheses on $\kappa_1$ and $\kappa_2$, we
have proved that for any $\bw\in \bH^1(\Omega)$ such that $\|\bw\|_{1,\Omega} \leq r_0$, 
the bilinear form $\bA_\bw$ and the functional $\bF$ satisfy the hypotheses of the Lax--Milgram theorem 
(see, e.g., \cite[Theorem 1.1]{Gatica}), which guarantees the well-posedness of \eqref{eq:CBF-augmented-weak-formulation-T}.
Finally, testing \eqref{eq:CBF-augmented-weak-formulation-T} with $(\btau,\bv)=(\bsi,\bu)$,
using \eqref{eq:ellipticity-Aw}, and 
the continuity bound of $\bF$ (cf. \eqref{eq:continuity-F}), we readily obtain that
\begin{equation*}
\frac{\alpha_\bA}{2}\,\|(\bsi,\bu)\| \,\leq\, C_\bF\,\Big\{ \|\f\|_{0,\Omega} + \|\bu_\rD\|_{1/2,\Gamma} + \|\bu_\rD\|_{0,\Gamma} \Big\} \,,
\end{equation*}
which implies \eqref{eq:Tw-bound} and complete the proof.
\end{proof}

\subsection{Well-posedness of the continuous problem}

Having proved the well-posedness of the problem \eqref{eq:CBF-augmented-weak-formulation-T},
which ensure that the operator $\bT$ is well defined, we now aim to establish the existence
of a unique fixed point of the operator $\bT$. For this purpose, in what follows we verify
the hypothesis of the Schauder and Banach fixed-point theorems.
We begin the analysis with the following straightforward consequence of Lemma \ref{lem:T-well-defined}.
\begin{lem}\label{lem:imagen-T-in-Wr}
Given $r\in (0,r_0]$, with $r_0:=\min\{r_1,r_2\}$ and $r_1,r_2$ as in \eqref{eq:radius-r0-r1-r2}, we let $\bW_r$
be the closed and convex subset of $\bH^1(\Omega)$ defined by
\begin{equation}\label{eq:ball-Wr}
\bW_r \,:=\, \Big\{ \bw\in \bH^1(\Omega) :\quad \|\bw\|_{1,\Omega} \,\leq\, r \Big\} \,.
\end{equation}
In addition, we take the stabilization parameters $\kappa_1$ and $\kappa_2$ as in Lemma \ref{lem:Ellipticity-A}, and assume that the data satisfy
\begin{equation}\label{eq:data-assumption-1}
\frac{2\,C_{\bF}}{\alpha_\bA}\,\Big\{ \|\f\|_{0,\Omega} + \|\bu_\rD\|_{1/2,\Gamma} + \|\bu_\rD\|_{0,\Gamma} \Big\} 
\,\leq\, r \,,
\end{equation}
with $C_\bF$ and $\alpha_\bA$ satisfying \eqref{eq:continuity-F} and \eqref{eq:ellipticity-A}, respectively.
Then there holds $\bT(\bW_r) \subseteq \bW_r$.
\end{lem}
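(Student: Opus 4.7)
The plan is to observe that this lemma is essentially a direct consequence of Lemma \ref{lem:T-well-defined}, combined with the smallness assumption \eqref{eq:data-assumption-1} on the data. No new analytical machinery is required; the work consists of chaining the bound \eqref{eq:Tw-bound} with \eqref{eq:data-assumption-1} to close the self-mapping property.

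First, I would fix $\bw \in \bW_r$, so by the very definition of $\bW_r$ (cf. \eqref{eq:ball-Wr}) we have $\|\bw\|_{1,\Omega} \leq r \leq r_0$, with $r_0 = \min\{r_1, r_2\}$ and $r_1, r_2$ as in \eqref{eq:radius-r0-r1-r2}. Next, since the hypotheses on $\kappa_1, \kappa_2$ coincide with those of Lemma \ref{lem:Ellipticity-A}, and since $\f \in \bL^2(\Omega)$, $\bu_\rD \in \bH^{1/2}(\Gamma)$ are assumed implicitly through \eqref{eq:data-assumption-1}, I can invoke Lemma \ref{lem:T-well-defined} to deduce that the problem \eqref{eq:CBF-augmented-weak-formulation-T} possesses a unique solution $(\bsi, \bu) \in \bbH_0(\bdiv;\Omega) \times \bH^1(\Omega)$, and that $\bT(\bw) = \bu$ satisfies the \emph{a priori} estimate
\begin{equation*}
\|\bT(\bw)\|_{1,\Omega} \,\leq\, \frac{2\,C_{\bF}}{\alpha_\bA}\,\Big\{ \|\f\|_{0,\Omega} + \|\bu_\rD\|_{1/2,\Gamma} + \|\bu_\rD\|_{0,\Gamma} \Big\}\,.
\end{equation*}

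Finally, I would apply the data assumption \eqref{eq:data-assumption-1} directly to the right-hand side of the inequality above to conclude that $\|\bT(\bw)\|_{1,\Omega} \leq r$, which means $\bT(\bw) \in \bW_r$. Since $\bw \in \bW_r$ was arbitrary, this yields $\bT(\bW_r) \subseteq \bW_r$, completing the argument.

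There is no genuine obstacle here. The only subtle point worth highlighting is the logical role of the two smallness conditions: the constraint $r \leq r_0$ is what makes $\bT$ \emph{well-defined} on $\bW_r$ (via the ellipticity of $\bA_\bw$ established in \eqref{eq:ellipticity-Aw}), whereas \eqref{eq:data-assumption-1} is precisely what makes $\bT$ \emph{stable} on $\bW_r$, i.e.\ the image remains in the same ball. Both conditions will be needed simultaneously later when applying the Schauder and Banach fixed-point theorems.
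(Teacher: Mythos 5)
Your proposal is correct and follows exactly the route the paper intends: the paper states this lemma without proof as a ``straightforward consequence'' of Lemma \ref{lem:T-well-defined}, namely chaining the bound \eqref{eq:Tw-bound} with the data assumption \eqref{eq:data-assumption-1}. Your remark on the distinct logical roles of $r\leq r_0$ (well-definedness of $\bT$) versus \eqref{eq:data-assumption-1} (invariance of the ball) is accurate and consistent with how the paper uses these conditions later.
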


We continue by providing an estimate needed to derive the 
continuity and compactness properties of the operator $\bT$.
To that end, we first observe that there exists $c_\rp>0$ such that
\begin{equation}\label{eq:glowinsky-2}
\big| |\bw|^{\rp-2} - |\wt{\bw}|^{\rp-2} \big|
\,=\, \big| |\bw|^{\rp-3}(|\bw|,\0) - |\wt{\bw}|^{\rp-3}(|\wt{\bw}|,\0) \big|
\,\leq\, c_\rp\,\big(|\bw| + |\wt{\bw}|\big)^{\rp-3} |\bw - \wt{\bw}| \,,
\end{equation}
which follows from \eqref{eq:glowinsky} with the setting 
$p=\rp-1\in [2,3]$, $\bz=(|\bw|,\0)$, $\by=(|\wt{\bw}|,\0)$, and $\0\in \R^{d-1}$.
The aforementioned result is established now.
\begin{lem}\label{lem:continuity-compactness-T}
Let $r\in (0,r_0]$, with $r_0:=\min\{r_1,r_2\}$ 
and $r_1,r_2$ as in \eqref{eq:radius-r0-r1-r2}, and let $\bW_r$ given by \eqref{eq:ball-Wr}.
Then, for each $\rq\in [4,6]$, there holds
\begin{equation}\label{eq:continuity-compactness-T}
\begin{array}{l}
\ds \|\bT(\bw) - \bT(\wt{\bw})\|_{1,\Omega}  \\[2ex]
\ds\quad \leq\, 
\frac{C_{\bF}}{\alpha_\bA\,r_0}
\,\Big\{ \|\f\|_{0,\Omega} + \|\bu_\rD\|_{1/2,\Gamma} + \|\bu_\rD\|_{0,\Gamma} \Big\}\, \bigg(\frac{1}{\|\bi_4\|}\,\|\bw - \wt{\bw}\|_{0,4;\Omega} + \frac{2^{\rp-3}\,c_\rp}{\|\bi_\rq\|}\,\|\bw - \wt{\bw}\|_{0,\rq;\Omega} \bigg) \,,
\end{array}
\end{equation}
for all $\bw, \wt{\bw}\in \bW_r$.
\end{lem}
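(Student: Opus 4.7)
The plan is to compare the two instances of problem \eqref{eq:CBF-augmented-weak-formulation-T} determined by $\bw$ and $\wt{\bw}$. Denoting their unique solutions (guaranteed by Lemma \ref{lem:T-well-defined}) by $(\bsi,\bu)$ and $(\wt{\bsi},\wt{\bu})$, so that $\bT(\bw)=\bu$ and $\bT(\wt{\bw})=\wt{\bu}$, the first move is to subtract the two identities and invoke the splitting \eqref{eq:bilinear-form-Aw}. After adding and subtracting $\bB_\bw((\wt{\bsi},\wt{\bu}),(\btau,\bv))$ one obtains
\begin{equation*}
\bA_\bw\big((\bsi-\wt{\bsi},\bu-\wt{\bu}),(\btau,\bv)\big)\,=\,\big(\bB_{\wt{\bw}}-\bB_\bw\big)\big((\wt{\bsi},\wt{\bu}),(\btau,\bv)\big)
\end{equation*}
for every $(\btau,\bv)\in \bbH_0(\bdiv;\Omega)\times \bH^1(\Omega)$. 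Testing with $(\btau,\bv)=(\bsi-\wt{\bsi},\bu-\wt{\bu})$ and invoking the uniform coercivity \eqref{eq:ellipticity-Aw}, which is available because $\bw\in \bW_{r_0}$, reduces matters to controlling $\big|(\bB_{\wt{\bw}}-\bB_\bw)((\wt{\bsi},\wt{\bu}),(\bsi-\wt{\bsi},\bu-\wt{\bu}))\big|$ in the desired form.

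For the convective piece, the identity $\bw\otimes\wt{\bu}-\wt{\bw}\otimes\wt{\bu}=(\bw-\wt{\bw})\otimes\wt{\bu}$, H\"older's inequality, and the injection $\bi_4:\bH^1(\Omega)\to\bL^4(\Omega)$ yield, after keeping the $\bL^4$-norm on $\bw-\wt{\bw}$, a bound proportional to $\frac{(1+\kappa_1)}{\nu}\,\|\bi_4\|\,\|\wt{\bu}\|_{1,\Omega}\,\|\bw-\wt{\bw}\|_{0,4;\Omega}\,\|(\bsi-\wt{\bsi},\bu-\wt{\bu})\|$. This step mirrors the continuity argument of \eqref{eq:continuity-Bw}.

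The main obstacle is the Forchheimer piece. Here, the auxiliary inequality \eqref{eq:glowinsky-2} replaces $\big||\bw|^{\rp-2}-|\wt{\bw}|^{\rp-2}\big|$ by $c_\rp\,(|\bw|+|\wt{\bw}|)^{\rp-3}\,|\bw-\wt{\bw}|$, and then I would apply a fourfold H\"older inequality with exponents $\rq/(\rp-3),\,\rq,\,\rq,\,2$. The choice $\rq=2(\rp-1)$ makes their reciprocals add exactly to $1$, which is the algebraic compatibility on which the whole scheme is built. Combining this with the injection $\bi_\rq:\bH^1(\Omega)\to \bL^\rq(\Omega)$ and the fact that $\|\bw\|_{1,\Omega},\|\wt{\bw}\|_{1,\Omega}\le r_0$, one reaches a contribution of order $\frac{\tF}{\alpha}\,c_\rp\,(2\,r_0)^{\rp-3}\,\|\bi_\rq\|^{\rp-2}\,\|\wt{\bu}\|_{1,\Omega}\,\|\bw-\wt{\bw}\|_{0,\rq;\Omega}\,\|(\bsi-\wt{\bsi},\bu-\wt{\bu})\|$.

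The final step is bookkeeping. Using the explicit form of $r_1$ in \eqref{eq:radius-r0-r1-r2} recasts the convective prefactor as $\frac{(1+\kappa_1)\|\bi_4\|}{\nu}=\frac{\alpha_\bA}{4\,r_1\,\|\bi_4\|}\le\frac{\alpha_\bA}{4\,r_0\,\|\bi_4\|}$, while the explicit form of $r_2$, together with $r_0\le r_2$ and $\rp-3\ge 0$, gives $\frac{\tF\,\|\bi_\rq\|^{\rp-2}}{\alpha}\,r_0^{\rp-3}\le\frac{\alpha_\bA}{4\,r_0\,\|\bi_\rq\|}$ via the elementary identity $r_0^{\rp-3}/r_2^{\rp-2}\le 1/r_0$. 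Dividing by $\frac{\alpha_\bA}{2}\,\|(\bsi-\wt{\bsi},\bu-\wt{\bu})\|$, replacing $\|\wt{\bu}\|_{1,\Omega}$ by the a priori bound \eqref{eq:Tw-bound} applied to $\wt{\bw}$, and using $\|\bT(\bw)-\bT(\wt{\bw})\|_{1,\Omega}=\|\bu-\wt{\bu}\|_{1,\Omega}\le \|(\bsi-\wt{\bsi},\bu-\wt{\bu})\|$, yields the claimed inequality \eqref{eq:continuity-compactness-T}. The expected main difficulty is the careful matching of H\"older exponents in the Forchheimer term and the subsequent constant bookkeeping.
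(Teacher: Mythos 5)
Your proposal is correct and follows essentially the same route as the paper: subtract the two instances of \eqref{eq:CBF-augmented-weak-formulation-T}, isolate the difference $\bB_{\bw}-\bB_{\wt{\bw}}$ acting on one of the two (a priori bounded) solutions, test with the error, invoke the coercivity \eqref{eq:ellipticity-Aw}, estimate the convective and Forchheimer pieces via H\"older, \eqref{eq:glowinsky-2} and the injections $\bi_4,\bi_\rq$, and then absorb the constants using the explicit forms of $r_1,r_2$ and the bound \eqref{eq:Tw-bound}. The only (immaterial) difference is the symmetric choice of which solution carries the $\bB$-difference — you use $(\wt{\bsi},\wt{\bu})$ with coercivity of $\bA_{\bw}$, the paper uses $(\bsi,\bu)$ with coercivity of $\bA_{\wt{\bw}}$ — and your exponent bookkeeping ($\rq/(\rp-3),\rq,\rq,2$ summing to one, and $r_0^{\rp-3}/r_2^{\rp-2}\le 1/r_0$) checks out.
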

\begin{proof}
Given $\bw, \wt{\bw}\in \bW_r$, we let $\bu:=\bT(\bw)$ and $\wt{\bu}:=\bT(\wt{\bw})$.
According to the definition of $\bT$ (cf. \eqref{eq:CBF-augmented-weak-formulation-T})
and the definitions of the forms $\bA_{\bw}$ and $\bB_{\bw}$ (cf. \eqref{eq:bilinear-form-Aw}, \eqref{eq:bilinear-form-Bw}), it follows that
\begin{equation*}
\bA_{\wt{\bw}}((\bsi,\bu) - (\wt{\bsi},\wt{\bu}), (\btau,\bv) ) 
\,=\, -(\bB_{\bw} - \bB_{\wt{\bw}})((\bsi,\bu), (\btau,\bv) ) 
\quad \forall\, (\btau,\bv)\in \bbH_0(\bdiv;\Omega)\times \bH^1(\Omega) \,.
\end{equation*}
Hence, taking $(\btau,\bv) = (\bsi,\bu) - (\wt{\bsi},\wt{\bu})$ in the foregoing identity,
and then employing the ellipticity of $\bA_{\bw}$ (cf. \eqref{eq:ellipticity-Aw}),
the continuity of $\bB_{\bw}$ (cf. \eqref{eq:continuity-Bw})
in combination with Cauchy--Schwarz and H\"older's inequalities, and 
\eqref{eq:glowinsky-2}, we readily get
\begin{equation*}
\begin{array}{l}
\ds \frac{\alpha_\bA}{2}\,\|(\bsi - \wt{\bsi},\bu - \wt{\bu})\|^2
\,\leq\, \bigg(\frac{1}{\nu}\,(1 + \kappa_1)\,\|\bi_4\|\,\|\bw - \wt{\bw}\|_{0,4;\Omega} \\[2ex] 
\ds\quad  
+\, \frac{\tF}{\alpha}\,c_\rp\,\|\bi_{\rq}\|^{\rp-2}\big( \|\bw\|_{1,\Omega} + \|\wt{\bw}\|_{1,\Omega} \big)^{\rp-3}\,\|\bw - \wt{\bw}\|_{0,\rq;\Omega} \bigg)
\,\|\bu\|_{1,\Omega}\,
\|(\bsi - \wt{\bsi},\bu - \wt{\bu})\| \,.
\end{array}
\end{equation*}
Then, using the definition of $r_1$ and $r_2$ (cf. \eqref{eq:radius-r0-r1-r2}), 
the fact that both $\|\bw\|_{1,\Omega}$ and $\|\wt{\bw}\|_{1,\Omega}$
are bounded by $r_2$, and simple algebraic manipulations, we obtain
\begin{equation}\label{eq:continuity-T-1}
\|(\bsi - \wt{\bsi},\bu - \wt{\bu})\| 
\,\leq\, \frac{1}{2}\,\|\bu\|_{1,\Omega}\,\bigg(\frac{1}{r_1\,\|\bi_4\|}\,\|\bw - \wt{\bw}\|_{0,4;\Omega} + \frac{2^{\rp-3}\,c_\rp}{r_2\,\|\bi_\rq\|}\,\|\bw - \wt{\bw}\|_{0,\rq;\Omega} \bigg) \,.
\end{equation}
Finally, from \eqref{eq:continuity-T-1}, noting that both $1/r_1$ and $1/r_2$ are bounded by $1/r_0$ and bounding $\|\bu\|_{1,\Omega}$ by \eqref{eq:Tw-bound} instead of by $r$, 
we obtain \eqref{eq:continuity-compactness-T} and conclude the proof.
\end{proof}

Owing to the above analysis, we establish now the announced properties of the operator $\bT$.
\begin{lem}\label{lem:Hypotheses-Schauder-T}
Let $r\in (0,r_0]$, with $r_0 := \min\{r_1,r_2\}$, and $r_1, r_2$ as in \eqref{eq:radius-r0-r1-r2}. Assume that the stabilization parameters 
$\kappa_1$ and $\kappa_2$ are taken as in Lemma \ref{lem:Ellipticity-A}, 
and that the data satisfy \eqref{eq:data-assumption-1}. 
Then $\bT: \bW_r \to \bW_r$  is continuous and $\ov{\bT(\bW_r)}$ is compact.
\end{lem}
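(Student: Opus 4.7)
The plan is to combine the Lipschitz-type bound from Lemma \ref{lem:continuity-compactness-T} with (i) the continuous Sobolev injections to deduce continuity, and (ii) the compact Sobolev embeddings to deduce the relative compactness of $\bT(\bW_r)$. Lemma \ref{lem:imagen-T-in-Wr} already guarantees $\bT(\bW_r)\subseteq \bW_r$, so $\bT$ is a well-defined self-map of the closed convex set $\bW_r$.

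For the continuity statement, given $\bw,\wt{\bw}\in \bW_r$, I would bound the $\bL^4(\Omega)$- and $\bL^{\rq}(\Omega)$-norms of $\bw-\wt{\bw}$ that appear on the right-hand side of \eqref{eq:continuity-compactness-T} by means of the continuous injections $\bi_4$ and $\bi_{\rq}$ already exploited in \eqref{eq:continuous-injection-H1-L4}--\eqref{eq:continuous-injection-H1-Lq}. The factors $\|\bi_4\|^{-1}$ and $\|\bi_{\rq}\|^{-1}$ present in \eqref{eq:continuity-compactness-T} cancel conveniently, yielding a uniform Lipschitz bound
\[
\|\bT(\bw)-\bT(\wt{\bw})\|_{1,\Omega} \,\leq\, L\,\|\bw-\wt{\bw}\|_{1,\Omega} \quad \forall\,\bw,\wt{\bw}\in \bW_r,
\]
with a constant $L>0$ depending only on $\f$, $\bu_\rD$, $\alpha_\bA$, $C_\bF$, $r_0$ and $c_{\rp}$. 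Lipschitz (and hence ordinary) continuity of $\bT$ on $\bW_r$ is then immediate.

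For the relative compactness of $\bT(\bW_r)$, I would argue sequentially: let $\{\bw_n\}\subset \bW_r$ be arbitrary. Since $\bW_r$ is bounded, closed and convex in the reflexive space $\bH^1(\Omega)$, there is a subsequence (still denoted $\{\bw_n\}$) and some $\bw\in \bW_r$ with $\bw_n \rightharpoonup \bw$ in $\bH^1(\Omega)$. By the Rellich--Kondrachov theorem, $\bw_n\to\bw$ strongly in $\bL^4(\Omega)$ and in $\bL^{\rq}(\Omega)$ for the admissible range of $\rq$, the borderline exponent $\rq=6$ in three dimensions being handled by interpolating between strong convergence in some $\bL^{\rq'}(\Omega)$ with $\rq'<6$ and the uniform $\bL^6(\Omega)$-bound supplied by the Sobolev embedding. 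Invoking \eqref{eq:continuity-compactness-T} applied to the differences $\bw_n-\bw_m$, these strong convergences force $\{\bT(\bw_n)\}$ to be a Cauchy sequence in $\bH^1(\Omega)$ and hence convergent; since the sequence was arbitrary, $\overline{\bT(\bW_r)}$ is (sequentially) compact.

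The main technical subtlety is the critical Sobolev exponent in three dimensions when $\rp=4$ (so $\rq=6$), where $\bH^1(\Omega)\hookrightarrow \bL^6(\Omega)$ is only continuous and not compact, which is why the interpolation step above is needed; everything else reduces to a routine transfer of compactness and continuity through the \emph{a priori} bound already proved in Lemma \ref{lem:continuity-compactness-T}.
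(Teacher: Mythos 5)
Your overall route is the same as the paper's: combine the Lipschitz-type estimate \eqref{eq:continuity-compactness-T} with Lemma \ref{lem:imagen-T-in-Wr}, the weak compactness of bounded sets in the Hilbert space $\bH^1(\Omega)$, and the Rellich--Kondrachov compact embeddings to pass from weak $\bH^1$-convergence of $\{\bw_n\}$ to strong $\bL^4$- and $\bL^{\rq}$-convergence, and hence to the Cauchy property of $\{\bT(\bw_n)\}$. The continuity part, via the uniform Lipschitz bound obtained by reinserting $\|\bi_4\|$ and $\|\bi_{\rq}\|$, is exactly the paper's estimate \eqref{eq:continuity-Tw} and is fine.

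The one step that does not hold up is your treatment of the borderline case $d=3$, $\rp=4$, i.e.\ $\rq=6$. Interpolating $\|\bw_n-\bw\|_{0,q;\Omega}\leq \|\bw_n-\bw\|_{0,q';\Omega}^{\theta}\,\|\bw_n-\bw\|_{0,6;\Omega}^{1-\theta}$ between strong convergence in $\bL^{q'}(\Omega)$ with $q'<6$ and the uniform $\bL^6(\Omega)$-bound yields strong convergence only for exponents $q<6$ (where $\theta>0$); at the endpoint $q=6$ one would need $\theta=0$ and the argument gives nothing. This is not a removable technicality: $\bH^1(\Omega)\hookrightarrow \bL^6(\Omega)$ is genuinely non-compact in three dimensions (concentration counterexamples), and the H\"older exponents in the Forchheimer term cannot be rebalanced to avoid $\rq=6$ when $\rp=4$. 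The paper itself quietly sidesteps this by invoking compactness of $\bi_{\rq}$ only for $\rq\in[4,6)$ when $d=3$ and deferring to \cite[Lemma~3.8]{cgot2016}, so your instinct that the endpoint requires special care is correct; but the interpolation fix you propose does not close it, and as written your proof of compactness is valid only for $d=2$ (any $\rp\in[3,4]$) and for $d=3$ with $\rp<4$.
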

\begin{proof}
The required result follows straightforwardly from estimate \eqref{eq:continuity-compactness-T}, 
the compactness of the injections $\bi_4 :\bH^1(\Omega)\to \bL^4(\Omega)$ and 
$\bi_\rq :\bH^1(\Omega)\to \bL^\rq(\Omega)$, with $\rq\in [4,6]$ when $d=2$ 
and $\rq\in [4,6)$ when $d=3$ (see, e.g., \cite[Theorem~1.3.5]{Quarteroni-Valli}), 
and the well-known fact that every bounded sequence in a Hilbert space has a weakly 
convergent subsequence. 
We omit further details and refer to \cite[Lemma~3.8]{cgot2016}.
\end{proof}

Finally, the main result of this section is stated as follows.
\begin{thm}\label{thm:fixed-point-T}
Let $\rp\in [3,4]$.	
Assume the same hypothesis of Lemma \ref{lem:Hypotheses-Schauder-T}. Then
the operator $\bT$ has a fixed point $\bu\in \bW_r$ (cf. \eqref{eq:ball-Wr}).
Equivalently, the continuous problem \eqref{eq:CBF-augmented-weak-formulation} 
has a solution $(\bsi, \bu)\in \bbH_0(\bdiv;\Omega)\times \bH^1(\Omega)$ 
with $\bu\in \bW_r$. Moreover, there holds
\begin{equation}\label{eq:dependence-continuous-CBF}
\|(\bsi,\bu)\| 
\,\leq\, \frac{2\,C_{\bF}}{\alpha_\bA}\, \Big\{ \|\f\|_{0,\Omega} + \|\bu_\rD\|_{1/2,\Gamma} + \|\bu_\rD\|_{0,\Gamma} \Big\} \,.
\end{equation}	 
In addition, if the data satisfy
\begin{equation}\label{eq:data-assumption-2}
\big( 1 + 2^{\rp-3}\,c_\rp \big)\,\frac{C_{\bF}}{\alpha_\bA r_0}
\,\Big\{ \|\f\|_{0,\Omega} + \|\bu_\rD\|_{1/2,\Gamma} + \|\bu_\rD\|_{0,\Gamma} \Big\} < 1\,,
\end{equation}
then the aforementioned fixed point (equivalently, the solution of 
\eqref{eq:CBF-augmented-weak-formulation}) is unique. 
\end{thm}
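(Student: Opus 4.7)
The plan is to apply the Schauder fixed-point theorem for existence and Banach's contraction principle for uniqueness, piggybacking on the preparatory lemmas that have already done the heavy lifting.

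\textbf{Existence.} First I observe that $\bW_r$ is a closed, bounded, convex subset of the Hilbert space $\bH^1(\Omega)$. Under the standing hypotheses on $\kappa_1, \kappa_2$ and on the data (namely \eqref{eq:data-assumption-1}), Lemma \ref{lem:imagen-T-in-Wr} guarantees that $\bT$ maps $\bW_r$ into itself, and Lemma \ref{lem:Hypotheses-Schauder-T} supplies the continuity of $\bT$ on $\bW_r$ and the relative compactness of $\bT(\bW_r)$. These are precisely the hypotheses of Schauder's fixed-point theorem, so $\bT$ admits at least one fixed point $\bu \in \bW_r$. By the equivalence between \eqref{eq:fixed-point-T} and the original augmented weak formulation, $(\bsi,\bu)$ solves \eqref{eq:CBF-augmented-weak-formulation}, and the \emph{a priori} bound \eqref{eq:dependence-continuous-CBF} is immediate from the estimate \eqref{eq:Tw-bound} of Lemma \ref{lem:T-well-defined} applied at $\bw=\bu$.

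\textbf{Uniqueness.} Suppose $\bu$ and $\wt\bu$ are two fixed points of $\bT$ in $\bW_r$, and let $(\bsi,\bu),(\wt\bsi,\wt\bu)$ be the corresponding solutions of \eqref{eq:CBF-augmented-weak-formulation-T}. Lemma \ref{lem:continuity-compactness-T}, applied with $\bw=\bu$ and $\wt\bw=\wt\bu$, gives
\begin{equation*}
\|\bu-\wt\bu\|_{1,\Omega} \,\leq\, \frac{C_\bF}{\alpha_\bA\,r_0}\,\Big\{\|\f\|_{0,\Omega} + \|\bu_\rD\|_{1/2,\Gamma} + \|\bu_\rD\|_{0,\Gamma}\Big\}\,\bigg(\frac{1}{\|\bi_4\|}\|\bu-\wt\bu\|_{0,4;\Omega} + \frac{2^{\rp-3}c_\rp}{\|\bi_\rq\|}\|\bu-\wt\bu\|_{0,\rq;\Omega}\bigg).
\end{equation*}
Now I exploit the continuous injections $\bi_4:\bH^1(\Omega)\to\bL^4(\Omega)$ and $\bi_\rq:\bH^1(\Omega)\to\bL^\rq(\Omega)$ to estimate $\|\bu-\wt\bu\|_{0,4;\Omega}\le \|\bi_4\|\,\|\bu-\wt\bu\|_{1,\Omega}$ and $\|\bu-\wt\bu\|_{0,\rq;\Omega}\le \|\bi_\rq\|\,\|\bu-\wt\bu\|_{1,\Omega}$. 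The factors $\|\bi_4\|$ and $\|\bi_\rq\|$ cancel exactly, yielding
\begin{equation*}
\|\bu-\wt\bu\|_{1,\Omega} \,\leq\, \bigl(1+2^{\rp-3}c_\rp\bigr)\,\frac{C_\bF}{\alpha_\bA\,r_0}\,\Big\{\|\f\|_{0,\Omega} + \|\bu_\rD\|_{1/2,\Gamma} + \|\bu_\rD\|_{0,\Gamma}\Big\}\,\|\bu-\wt\bu\|_{1,\Omega},
\end{equation*}
and the smallness assumption \eqref{eq:data-assumption-2} forces $\bu=\wt\bu$. Equality of the $\bsi$-component then follows from Lax--Milgram uniqueness of \eqref{eq:CBF-augmented-weak-formulation-T} at $\bw=\bu$.

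The delicate point I anticipate is entirely contained in Lemma \ref{lem:continuity-compactness-T}: the denominator structure involving $r_0$, $\|\bi_4\|$, and $\|\bi_\rq\|$ is engineered precisely so that the subsequent use of the injections produces the dimensionally clean contractivity condition \eqref{eq:data-assumption-2}, free of any residual injection constants. No additional work is needed beyond invoking the correct lemma and being careful with these cancellations; the Schauder half is essentially a bookkeeping check that the previous lemmas cover every hypothesis.
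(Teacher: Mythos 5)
Your proposal is correct and follows essentially the same route as the paper: Schauder's theorem via Lemmas \ref{lem:imagen-T-in-Wr} and \ref{lem:Hypotheses-Schauder-T} for existence, the bound \eqref{eq:Tw-bound} for \eqref{eq:dependence-continuous-CBF}, and the Lipschitz estimate obtained from Lemma \ref{lem:continuity-compactness-T} combined with the injections $\bi_4,\bi_\rq$ for uniqueness. The only cosmetic difference is that you apply the contraction estimate directly to two fixed points rather than formally invoking the Banach fixed-point theorem, which is the same argument.
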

\begin{proof}
The equivalence between \eqref{eq:CBF-augmented-weak-formulation} and the fixed point 
equation \eqref{eq:fixed-point-T}, together with Lemmas \ref{lem:imagen-T-in-Wr} and 
\ref{lem:Hypotheses-Schauder-T}, confirm the existence of solution of \eqref{eq:CBF-augmented-weak-formulation} 
as a direct application of the Schauder fixed-point theorem \cite[Theorem~9.12-1(b)]{Ciarlet}.
In addition, it is clear that the estimate \eqref{eq:dependence-continuous-CBF} follows from 
\eqref{eq:Tw-bound}. On the other hand, using the estimate 
\eqref{eq:continuity-compactness-T} and the continuous injection $\bi_4$ (resp. $\bi_\rq$) of $\bH^1(\Omega)$ into 
$\bL^4(\Omega)$ (resp. $\bL^\rq(\Omega)$, with $\rq\in [4,6]$), we easily obtain
\begin{equation}\label{eq:continuity-Tw}
\|\bT(\bw) - \bT(\wt{\bw})\|_{1,\Omega} 
\,\leq\, \big( 1 + 2^{\rp-3}\,c_\rp \big)\,\frac{C_{\bF}}{\alpha_\bA\,r_0}
\,\Big\{ \|\f\|_{0,\Omega} + \|\bu_\rD\|_{1/2,\Gamma} + \|\bu_\rD\|_{0,\Gamma} \Big\}\,\|\bw - \wt{\bw}\|_{1,\Omega} \,,
\end{equation} 
which, thanks to \eqref{eq:data-assumption-2} and the Banach fixed-point theorem, 
yields the uniqueness.
\end{proof}

\section{The Galerkin scheme}\label{sec:Galerkin-scheme}

In this section, we introduce and analyze the corresponding Galerkin scheme 
for the mixed formulation \eqref{eq:CBF-augmented-weak-formulation}. 
The solvability of this scheme is addressed following analogous tools to those employed
throughout Section \ref{sec:analysis-continuous-problem}.
Finally, we derive the corresponding C\'ea estimate and rates of convergence of the Galerkin scheme.

\subsection{Discrete setting}\label{sec:discrete-setting}

We first let $\{\cT_h\}_{h>0}$ be a regular family of triangulations of $\ov{\Omega}$ 
by triangles $T$ (respectively tetrahedra $T$ in $\R^3$), and set $h:=\max\big\{ h_T:\,\, T\in \cT_h \big\}$.
In turn, given an integer $l\geq 0$ and a subset $S$ of $\R^d$, we denote by $\rP_l(S)$
the space of polynomials of total degree at most $l$ defined on $S$.
Hence, for each integer $k\geq 0$ and for each $T\in \cT_h$, we define the local Raviart--Thomas 
space of order $k$ as
\begin{equation*}
\bRT_k(T) := \bP_k(T)\oplus \wt{\rP}_k(T)\,\bx\,,
\end{equation*}
where $\bx:= (x_1,\dots, x_d)^\rt$ is a generic vector of $\R^d$,
$\wt{\rP}_k(T)$ is the space of polynomials of total degree equal to $k$ defined on $T$,
and, according to the convention in Section \ref{sec:introduction}, we set $\bP_k(T):=[\rP_k(T)]^d$.
In this way, introducing the finite element subspaces:
\begin{subequations}\label{eq:subspaces}
\begin{align}
\ds \bbH_{h}^\bsi & := \Big\{ \btau_h\in \bbH_0(\bdiv;\Omega) :\quad \bc^\rt\btau_h|_T\in \bRT_k(T),\quad \forall\,\bc\in \R^d,\quad \forall\,T\in \cT_h \Big\} \,, \label{eq:subspaces-a} \\
\ds \bH_h^\bu & := \Big\{ \bv_h\in \bC(\ov{\Omega}) :\quad \bv_h|_T\in \bP_{k+1}(T) \,,\quad \forall\,T\in \cT_h \Big\} \,, \label{eq:subspaces-b}
\end{align}
\end{subequations}
the Galerkin scheme for \eqref{eq:CBF-augmented-weak-formulation} reads: 
Find $(\bsi_h, \bu_h)\in \bbH_h^\bsi \times \bH_h^\bu$ such that 
\begin{equation}\label{eq:CBF-augmented-discrete-formulation-H0}
\bA_{\bu_h}((\bsi_h,\bu_h),(\btau_h,\bv_h)) \,=\, \bF(\btau_h,\bv_h) \quad \forall\,(\btau_h,\bv_h)\in \bbH_h^\bsi \times \bH_h^\bu \,.
\end{equation} 

Similarly to the continuous context, in order to analyze problem \eqref{eq:CBF-augmented-discrete-formulation-H0} we rewrite it equivalently as a fixed-point problem.
Indeed, we define the operator $\bT_\ttd: \bH_h^\bu \to \bH_h^\bu$ by
\begin{equation*}
\bT_\ttd(\bw_h) \,=\, \bu_h \quad \forall\,\bw_h\in \bH_h^\bu\,,
\end{equation*}
where $(\bsi_h, \bu_h)$ is the unique solution of the discrete version of the problem \eqref{eq:CBF-augmented-weak-formulation-T}:
Find $(\bsi_h,\bu_h)\in \bbH_h^\bsi\times \bH_h^\bu$ such that 
\begin{equation}\label{eq:CBF-augmented-discrete-formulation-FixedP}
\bA_{\bw_h}((\bsi_h,\bu_h),(\btau_h,\bv_h)) \,=\, \bF(\btau_h,\bv_h) \quad \forall\,(\btau_h,\bv_h)\in \bbH_h^\bsi \times \bH_h^\bu \,,
\end{equation} 
where the bilinear form $\bA_{\bw_h}$ and the functional $\bF$ are defined in \eqref{eq:bilinear-form-Aw} (with $\bw_h$ instead of $\bw$) and \eqref{eq:functional-F}, respectively. 
Therefore solving \eqref{eq:CBF-augmented-discrete-formulation-H0} is equivalent to seeking a fixed point of the operator $\bT_\ttd$, that is:
Find $\bu_h\in \bH_h^\bu$ such that
\begin{equation}\label{eq:fixed-point-Td}
\bT_\ttd (\bu_h) = \bu_h \,.
\end{equation}

\subsection{Solvability Analysis}\label{sec:solv}

We begin by remarking that the same tools employed in the proof of Lemma 
\ref{lem:T-well-defined} can be used now to prove the unique solvability 
of \eqref{eq:CBF-augmented-discrete-formulation-H0}. In fact, under 
the same assumptions from Lemma \ref{lem:Ellipticity-A} 
on the stabilization parameters, we find that for each $\bw_h\in \bH^{\bu}_h$,
$\bA_{\bw_h}$ is bounded and elliptic on $\bbH^{\bsi}_h\times \bH^{\bu}_h$ with the same 
constants obtained in \eqref{eq:continuity-Aw} and \eqref{eq:ellipticity-Aw}, respectively. 
In turn, from \eqref{eq:functional-F} and \eqref{eq:continuity-F}, 
the functional $\bF$ is linear and bounded. 
The foregoing discussion and the Lax--Milgram theorem allow us to conclude the following result.
\begin{lem}\label{lem:Tdw-bound}
Let $r\in (0,r_0]$, with $r_0 := \min\{r_1,r_2\}$, and $r_1, r_2$ as in \eqref{eq:radius-r0-r1-r2} 
and assume $\kappa_1$, $\kappa_2$ as in Lemma \ref{lem:Ellipticity-A}. 
Then, for each $\bw_h \in \bH_h^\bu$ satisfying $\|\bw_h\|_{1,\Omega} \leq r$, 
the problem \eqref{eq:CBF-augmented-discrete-formulation-FixedP} has a unique solution 
$(\bsi_h,\bu_h)\in \bbH_h^\bsi \times \bH_h^\bu$. Moreover, there holds
\begin{equation}\label{eq:Tdw-bound}
\|\bT_\ttd(\bw_h)\|_{1,\Omega} \,=\, \|\bu_h\|_{1,\Omega}
\,\leq\, \|(\bsi_h,\bu_h)\| \,\leq\, \frac{2\,C_{\bF}}{\alpha_\bA}\,
\Big\{ \|\f\|_{0,\Omega} + \|\bu_\rD\|_{1/2,\Gamma} + \|\bu_\rD\|_{0,\Gamma} \Big\} \,,
\end{equation}
with $C_\bF$ and $\alpha_\bA$ satisfying \eqref{eq:continuity-F} and \eqref{eq:ellipticity-A}, respectively.
\end{lem}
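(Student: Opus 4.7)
The plan is to mirror the argument of Lemma \ref{lem:T-well-defined} verbatim at the discrete level, using the key fact that the conforming subspaces $\bbH_h^\bsi \subseteq \bbH_0(\bdiv;\Omega)$ and $\bH_h^\bu \subseteq \bH^1(\Omega)$ inherit both the ellipticity of $\bA$ and the continuity of $\bA$, $\bB_\bw$, and $\bF$ without any change of constants. Concretely, I would first fix $\bw_h \in \bH_h^\bu$ with $\|\bw_h\|_{1,\Omega} \le r \le r_0$ and observe that $\bA_{\bw_h}$ is bilinear by its very definition \eqref{eq:bilinear-form-Aw}.

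Next, I would check ellipticity: testing $\bA_{\bw_h}((\btau_h,\bv_h),(\btau_h,\bv_h))$ for an arbitrary $(\btau_h,\bv_h) \in \bbH_h^\bsi \times \bH_h^\bu$, I would apply \eqref{eq:ellipticity-A} (which holds on the whole space, hence on the subspace) to bound the $\bA$-part from below by $\alpha_\bA\,\|(\btau_h,\bv_h)\|^2$, and then use \eqref{eq:continuity-Bw} combined with $\|\bw_h\|_{1,\Omega} \le r_0$ and the definitions of $r_1,r_2$ in \eqref{eq:radius-r0-r1-r2} to bound the $\bB_{\bw_h}$-part in absolute value by $\tfrac{\alpha_\bA}{2}\,\|(\btau_h,\bv_h)\|^2$, exactly as in \eqref{eq:two-bounds-for-w}. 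This yields
\[
\bA_{\bw_h}((\btau_h,\bv_h),(\btau_h,\bv_h)) \,\ge\, \frac{\alpha_\bA}{2}\,\|(\btau_h,\bv_h)\|^2
\qquad \forall\,(\btau_h,\bv_h)\in \bbH_h^\bsi \times \bH_h^\bu.
\]
Continuity of $\bA_{\bw_h}$ on $\bbH_h^\bsi \times \bH_h^\bu$ follows from \eqref{eq:continuity-A} and \eqref{eq:continuity-Bw} (restricted to the subspaces), with constant $C_\bA + \tfrac{\alpha_\bA}{2}$ as in \eqref{eq:continuity-Aw}, and linearity plus boundedness of $\bF$ on the discrete space is immediate from \eqref{eq:functional-F} and \eqref{eq:continuity-F}.

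With these three ingredients in hand, the Lax--Milgram theorem applied to the Hilbert space $\bbH_h^\bsi \times \bH_h^\bu$ (a closed subspace of the product Hilbert space, being finite-dimensional) delivers the existence and uniqueness of $(\bsi_h,\bu_h)\in \bbH_h^\bsi \times \bH_h^\bu$ solving \eqref{eq:CBF-augmented-discrete-formulation-FixedP}. Finally, testing the equation with $(\btau_h,\bv_h) = (\bsi_h,\bu_h)$, using the discrete ellipticity bound on the left and \eqref{eq:continuity-F} on the right, gives
\[
\frac{\alpha_\bA}{2}\,\|(\bsi_h,\bu_h)\| \,\le\, C_\bF\,\Big\{\|\f\|_{0,\Omega} + \|\bu_\rD\|_{1/2,\Gamma} + \|\bu_\rD\|_{0,\Gamma}\Big\},
\]
which is exactly \eqref{eq:Tdw-bound} after noting $\|\bT_\ttd(\bw_h)\|_{1,\Omega} = \|\bu_h\|_{1,\Omega} \le \|(\bsi_h,\bu_h)\|$. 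There is no real obstacle here: the entire proof is a conforming transcription of Lemma \ref{lem:T-well-defined}, the only point to flag explicitly being that the smallness condition $\|\bw_h\|_{1,\Omega} \le r_0$ is what allows the continuous ellipticity/boundedness constants to transfer unchanged; no inf--sup condition or discrete compatibility is needed because both inequalities \eqref{eq:tau-d-div-tau-inequality} and \eqref{eq:grad-v-v-inequality} hold pointwise on the respective spaces and thus on their subspaces.
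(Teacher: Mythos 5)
Your proposal is correct and follows essentially the same route as the paper: the paper's own argument (given in the paragraph preceding the lemma) is precisely that the conforming inclusions $\bbH_h^\bsi\subseteq \bbH_0(\bdiv;\Omega)$ and $\bH_h^\bu\subseteq \bH^1(\Omega)$ let $\bA_{\bw_h}$ inherit the ellipticity and boundedness constants of \eqref{eq:ellipticity-Aw} and \eqref{eq:continuity-Aw}, after which Lax--Milgram and testing with $(\bsi_h,\bu_h)$ yield \eqref{eq:Tdw-bound}. No discrepancies to report.
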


We now proceed to analyze the fixed-point equation \eqref{eq:fixed-point-Td}.
We begin with the discrete version of Lemma \ref{lem:imagen-T-in-Wr}, whose proof, follows straightforwardly from Lemma \ref{lem:Tdw-bound}.
\begin{lem}\label{lem:clausure-wr}
Let $r\in (0,r_0]$, with $r_0 := \min\{r_1,r_2\}$, and $r_1, r_2$ as in \eqref{eq:radius-r0-r1-r2},
and let $\wt{\bW}_r$ be the bounded subset of $\bH_h^\bu$ defined by
\begin{equation}\label{eq:discrete-Wr}
\wt{\bW}_r \,:=\, \Big\{ \bw_h\in\bH_h^\bu : \quad \|\bw_h\|_{1,\Omega} \,\leq\, r \Big\} \,.
\end{equation}
Assume $\kappa_1$ and $\kappa_2$ as in Lemma \ref{lem:Ellipticity-A}
and that the data $\f$ and $\bu_\rD$ satisfy \eqref{eq:data-assumption-1}.
Then $\bT_\ttd(\wt{\bW}_r) \subseteq \wt{\bW}_r$.
\end{lem}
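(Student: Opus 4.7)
The plan is to mirror the continuous-case argument of Lemma \ref{lem:imagen-T-in-Wr}, now relying on the discrete well-posedness result of Lemma \ref{lem:Tdw-bound}. Concretely, I would start by fixing an arbitrary $\bw_h \in \wt{\bW}_r$, so that $\|\bw_h\|_{1,\Omega} \leq r \leq r_0$. Since the stabilization parameters $\kappa_1, \kappa_2$ are chosen as in Lemma \ref{lem:Ellipticity-A}, the hypotheses of Lemma \ref{lem:Tdw-bound} are satisfied, and hence problem \eqref{eq:CBF-augmented-discrete-formulation-FixedP} admits a unique solution $(\bsi_h, \bu_h) \in \bbH_h^\bsi \times \bH_h^\bu$ with $\bT_\ttd(\bw_h) = \bu_h$.

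Next, I would invoke the a priori bound \eqref{eq:Tdw-bound}, which gives
\[
\|\bT_\ttd(\bw_h)\|_{1,\Omega} \,\leq\, \frac{2\,C_{\bF}}{\alpha_\bA}\,\Big\{ \|\f\|_{0,\Omega} + \|\bu_\rD\|_{1/2,\Gamma} + \|\bu_\rD\|_{0,\Gamma} \Big\}.
\]
Combining this estimate with the data smallness assumption \eqref{eq:data-assumption-1}, namely that the right-hand side above is bounded by $r$, yields $\|\bT_\ttd(\bw_h)\|_{1,\Omega} \leq r$, so that $\bT_\ttd(\bw_h) \in \wt{\bW}_r$. Since $\bw_h \in \wt{\bW}_r$ was arbitrary, this gives $\bT_\ttd(\wt{\bW}_r) \subseteq \wt{\bW}_r$.

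There is essentially no obstacle here: the discrete ellipticity and continuity of $\bA_{\bw_h}$ hold on $\bbH_h^\bsi \times \bH_h^\bu$ with the very same constants as in the continuous case (since these are conforming subspaces of $\bbH_0(\bdiv;\Omega) \times \bH^1(\Omega)$ and the ellipticity/continuity bounds \eqref{eq:ellipticity-Aw}--\eqref{eq:continuity-Aw} are inherited verbatim), and $\bF$ remains bounded by \eqref{eq:continuity-F}. Thus the argument is a direct transcription of the continuous one, with Lemma \ref{lem:Tdw-bound} playing the role of Lemma \ref{lem:T-well-defined}, and the proof reduces to a short two-line verification.
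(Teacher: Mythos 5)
Your proof is correct and follows exactly the route the paper intends: the paper itself states that the lemma ``follows straightforwardly from Lemma \ref{lem:Tdw-bound},'' which is precisely the combination of the a priori bound \eqref{eq:Tdw-bound} with the data assumption \eqref{eq:data-assumption-1} that you spell out. Your remark that the ellipticity and continuity constants are inherited on the conforming subspaces matches the discussion preceding Lemma \ref{lem:Tdw-bound} in the paper.
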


Next, we address the discrete counterpart of \eqref{eq:continuity-Tw} (see also Lemma \ref{lem:continuity-compactness-T}),
whose proof, being almost verbatim of the continuous one, is omitted.
Thus, we simply state the corresponding result as follows.
\begin{lem}\label{lem:continuity-Th}
Let $\rp\in [3,4]$ and $r\in (0,r_0]$, with $r_0 := \min\{r_1,r_2\}$, and $r_1, r_2$ as in \eqref{eq:radius-r0-r1-r2},
and let $C_\bF, \alpha_{\bA}$ satisfying \eqref{eq:continuity-F}, \eqref{eq:ellipticity-A}, respectively.
Then, there holds
\begin{equation}\label{eq:discrete-continuous-Th}
\|\bT_\ttd(\bw_h) - \bT_\ttd(\wt{\bw}_h)\|_{1,\Omega} 
\,\leq\, \big(1 + 2^{\rp-3}c_\rp\big)\,\frac{C_{\bF}}{\alpha_\bA\,r_0}\,
\Big\{ \|\f\|_{0,\Omega} + \|\bu_\rD\|_{1/2,\Gamma} + \|\bu_\rD\|_{0,\Gamma} \Big\}\,\|\bw_h - \wt{\bw}_h\|_{1,\Omega} \,,
\end{equation}
for all $\bw_h, \wt{\bw}_h \in \wt{\bW}_r$.
\end{lem}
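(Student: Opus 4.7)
The plan is to mimic the continuous argument from Lemma \ref{lem:continuity-compactness-T}, followed by the injection step that takes us from \eqref{eq:continuity-compactness-T} to \eqref{eq:continuity-Tw}, but now working entirely inside the discrete product space $\bbH_h^\bsi\times\bH_h^\bu$. The key observation justifying a verbatim transfer is that $\bbH_h^\bsi\times\bH_h^\bu$ is a closed linear subspace of $\bbH_0(\bdiv;\Omega)\times \bH^1(\Omega)$, so the ellipticity bound \eqref{eq:ellipticity-Aw} and the continuity bounds \eqref{eq:continuity-A}--\eqref{eq:continuity-Bw} of $\bA$, $\bA_{\bw_h}$, and $\bB_{\bw_h}$ remain valid with the same constants as long as the argument $\bw_h$ satisfies $\|\bw_h\|_{1,\Omega}\le r_0$, which is precisely the definition of $\wt{\bW}_r$.

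First, given $\bw_h, \wt{\bw}_h\in \wt{\bW}_r$, I would set $\bu_h:=\bT_\ttd(\bw_h)$ and $\wt{\bu}_h:=\bT_\ttd(\wt{\bw}_h)$, with the associated pseudostresses $\bsi_h, \wt{\bsi}_h\in\bbH_h^\bsi$ provided by Lemma \ref{lem:Tdw-bound}. Subtracting the two corresponding instances of \eqref{eq:CBF-augmented-discrete-formulation-FixedP} and using the splitting \eqref{eq:bilinear-form-Aw}, I obtain the Galerkin-type identity
\[
\bA_{\wt{\bw}_h}\bigl((\bsi_h-\wt{\bsi}_h,\bu_h-\wt{\bu}_h),(\btau_h,\bv_h)\bigr)
\,=\, -(\bB_{\bw_h}-\bB_{\wt{\bw}_h})\bigl((\bsi_h,\bu_h),(\btau_h,\bv_h)\bigr)
\]
for every $(\btau_h,\bv_h)\in\bbH_h^\bsi\times\bH_h^\bu$. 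Testing with the admissible choice $(\btau_h,\bv_h)=(\bsi_h-\wt{\bsi}_h,\bu_h-\wt{\bu}_h)$, and then invoking the discrete ellipticity \eqref{eq:ellipticity-Aw} on the left together with the explicit bound on $\bB_{\bw_h}-\bB_{\wt{\bw}_h}$ on the right via Cauchy--Schwarz, H\"older, and the pointwise inequality \eqref{eq:glowinsky-2}, I expect to reproduce the analogue of \eqref{eq:continuity-T-1}, namely
\[
\|(\bsi_h-\wt{\bsi}_h,\bu_h-\wt{\bu}_h)\|
\,\le\, \tfrac{1}{2}\|\bu_h\|_{1,\Omega}\Bigl(\tfrac{1}{r_1\|\bi_4\|}\|\bw_h-\wt{\bw}_h\|_{0,4;\Omega}+\tfrac{2^{\rp-3}c_\rp}{r_2\|\bi_\rq\|}\|\bw_h-\wt{\bw}_h\|_{0,\rq;\Omega}\Bigr),
\]
where the bounds $\|\bw_h\|_{1,\Omega},\|\wt{\bw}_h\|_{1,\Omega}\le r_2$ inherited from $\wt{\bW}_r$ are used to absorb the $\rp-3$ power in the Forchheimer term.

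To close, I would substitute the a priori bound \eqref{eq:Tdw-bound} for $\|\bu_h\|_{1,\Omega}$, replace $1/r_1$ and $1/r_2$ by the common upper bound $1/r_0$, and finally apply the continuous injections $\bi_4:\bH^1(\Omega)\to\bL^4(\Omega)$ and $\bi_\rq:\bH^1(\Omega)\to\bL^\rq(\Omega)$ to replace the $\bL^4$- and $\bL^\rq$-norms of $\bw_h-\wt{\bw}_h$ by its $\bH^1$-norm; this is exactly the extra step that turns \eqref{eq:continuity-compactness-T} into \eqref{eq:continuity-Tw} in the proof of Theorem \ref{thm:fixed-point-T}. The outcome is precisely \eqref{eq:discrete-continuous-Th}. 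I do not anticipate any real obstacle: because ellipticity and boundedness of $\bA_{\wt{\bw}_h}$ hold on the conforming subspace with the continuous constants, no discrete inf--sup condition or Fortin-type projection is needed, and the only subtlety worth flagging explicitly is that the difference $(\bsi_h-\wt{\bsi}_h,\bu_h-\wt{\bu}_h)$ itself lives in $\bbH_h^\bsi\times\bH_h^\bu$, which is what legitimizes its use as a test function and makes the proof genuinely verbatim of the continuous one.
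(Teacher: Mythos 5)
Your proposal is correct and follows exactly the route the paper intends: the paper explicitly omits this proof as being ``almost verbatim of the continuous one'' (Lemma \ref{lem:continuity-compactness-T} together with the injection step yielding \eqref{eq:continuity-Tw}), and you have filled in precisely those details, correctly noting that conformity of $\bbH_h^\bsi\times\bH_h^\bu$ is what lets the ellipticity and continuity constants carry over and legitimizes testing with the difference.
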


We are now in position of establishing the well-posedness of \eqref{eq:CBF-augmented-discrete-formulation-H0}.
\begin{thm}\label{th:contractive}
Let $\rp\in [3,4]$.	
Assume the same hypothesis of Lemma \ref{lem:clausure-wr}.
Then, the operator $\bT_\ttd$ has a fixed point $\bu_h \in \wt{\bW}_r$ (cf. \eqref{eq:discrete-Wr}). 
Equivalently, the discrete problem \eqref{eq:CBF-augmented-discrete-formulation-H0} has a solution $(\bsi_h,\bu_h)\in \bbH_h^\bsi \times \bH_h^\bu$, with $\bu_h\in \wt{\bW}_r$. 
Moreover, there holds
\begin{equation}\label{eq:discrete-dependence-continuous-CBF}
\|(\bsi_h,\bu_h)\|
\,\leq\, \frac{2\,C_{\bF}}{\alpha_\bA}\,\Big\{ \|\f\|_{0,\Omega} + \|\bu_\rD\|_{1/2,\Gamma} + \|\bu_\rD\|_{0,\Gamma} \Big\} \,.
\end{equation}
In addition, if the data satisfy \eqref{eq:data-assumption-2},
%
then the aforementioned fixed point (equivalently, the solution of \eqref{eq:CBF-augmented-discrete-formulation-H0}) is unique.
\end{thm}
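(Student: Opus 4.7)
The plan is to follow the exact same three-step pattern used in the continuous case (Theorem~\ref{thm:fixed-point-T}), replacing Schauder's theorem with Brouwer's, which is available because the ambient space $\bH_h^\bu$ is finite-dimensional. First, I would note that Lemma~\ref{lem:clausure-wr} already guarantees $\bT_\ttd(\wt{\bW}_r)\subseteq \wt{\bW}_r$ under the hypotheses, and that $\wt{\bW}_r$ is a nonempty, closed, convex, and bounded subset of the finite-dimensional Hilbert space $\bH_h^\bu$ (equipped with the $\|\cdot\|_{1,\Omega}$-norm).

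Second, I would establish continuity of $\bT_\ttd$ on $\wt{\bW}_r$ as an immediate consequence of estimate \eqref{eq:discrete-continuous-Th} in Lemma~\ref{lem:continuity-Th}: the right-hand side depends linearly on $\|\bw_h-\wt{\bw}_h\|_{1,\Omega}$ with a constant depending only on the data and on fixed parameters, so Lipschitz continuity (hence continuity) on $\wt{\bW}_r$ is automatic. With $\bT_\ttd:\wt{\bW}_r\to\wt{\bW}_r$ continuous and $\wt{\bW}_r$ a compact convex subset of a finite-dimensional normed space (compactness coming for free in finite dimensions, unlike the continuous case where compactness of $\ov{\bT(\bW_r)}$ required a genuine argument via compact Sobolev embeddings), the Brouwer fixed-point theorem yields the existence of $\bu_h\in \wt{\bW}_r$ such that $\bT_\ttd(\bu_h)=\bu_h$. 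By construction of $\bT_\ttd$ this is equivalent to the existence of a solution $(\bsi_h,\bu_h)\in\bbH_h^\bsi\times \bH_h^\bu$ of \eqref{eq:CBF-augmented-discrete-formulation-H0} with $\bu_h\in\wt{\bW}_r$, and the \emph{a priori} bound \eqref{eq:discrete-dependence-continuous-CBF} is nothing but \eqref{eq:Tdw-bound} applied to the fixed point.

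Third, for the uniqueness claim under the stronger data assumption \eqref{eq:data-assumption-2}, I would invoke \eqref{eq:discrete-continuous-Th} once more: the assumption states precisely that the Lipschitz constant of $\bT_\ttd$ on $\wt{\bW}_r$ is strictly less than $1$, so $\bT_\ttd$ is a contraction. Uniqueness then follows either from the Banach fixed-point theorem or by a direct argument: if $\bu_h,\wt{\bu}_h\in\wt{\bW}_r$ are two fixed points, then \eqref{eq:discrete-continuous-Th} gives $\|\bu_h-\wt{\bu}_h\|_{1,\Omega}\le C\,\|\bu_h-\wt{\bu}_h\|_{1,\Omega}$ with $C<1$, forcing $\bu_h=\wt{\bu}_h$, and the uniqueness of $\bsi_h$ follows from that of $\bu_h$ via the linear problem \eqref{eq:CBF-augmented-discrete-formulation-FixedP} with $\bw_h=\bu_h$, which is uniquely solvable by Lemma~\ref{lem:Tdw-bound}.

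There is essentially no main obstacle here: all the heavy lifting (ellipticity of $\bA_{\bw_h}$ on the discrete product space, boundedness of $\bF$, Lipschitz estimate on $\bT_\ttd$, invariance of $\wt{\bW}_r$) has already been packaged in Lemmas~\ref{lem:Tdw-bound}--\ref{lem:continuity-Th}. The only conceptual point worth stressing is that, unlike in the continuous case where Schauder's theorem was needed because $\bH^1(\Omega)$ is infinite-dimensional and one had to exploit the compactness of the embeddings $\bH^1(\Omega)\hookrightarrow\bL^4(\Omega), \bL^\rq(\Omega)$ to obtain compactness of $\ov{\bT(\bW_r)}$, the discrete setting makes compactness of $\wt{\bW}_r$ trivial, so Brouwer suffices for existence and no compact embedding is invoked.
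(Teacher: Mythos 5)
Your proposal is correct and follows essentially the same route as the paper: invariance of $\wt{\bW}_r$ from Lemma \ref{lem:clausure-wr}, continuity of $\bT_\ttd$ from the Lipschitz estimate \eqref{eq:discrete-continuous-Th}, existence via the Brouwer fixed-point theorem, the bound \eqref{eq:discrete-dependence-continuous-CBF} from \eqref{eq:Tdw-bound}, and uniqueness via the contraction property under \eqref{eq:data-assumption-2}. Your added remark that compactness is free in the finite-dimensional setting is a correct and fair elaboration of why Brouwer replaces Schauder here.
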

\begin{proof}
It follows similarly to the proof of Theorem \ref{thm:fixed-point-T}. 
Indeed, we first notice from Lemma \ref{lem:clausure-wr} that $\bT_\ttd$ maps the ball $\wt{\bW}_r$ 
into itself. In turn, it is easy to see from \eqref{eq:discrete-continuous-Th}  
that $\bT_\ttd:\wt{\bW}_r\to \wt{\bW}_r$ 
is continuous, and hence the existence result follows from 
the Brouwer fixed-point theorem \cite[Theorem 9.9-2]{Ciarlet}. 
In addition, it is clear that the estimate \eqref{eq:discrete-dependence-continuous-CBF} follows from 
\eqref{eq:Tdw-bound}.
On the other hand, the estimate \eqref{eq:discrete-continuous-Th} and the assumption  \eqref{eq:data-assumption-2} show that $\bT_\ttd$ is a contraction mapping,
which, thanks to the Banach fixed-point theorem, implies the uniqueness result and concludes the proof. 
\end{proof}

%
%
%
%
%
%
%

\subsection{\textit{A priori} error analysis}\label{sec:a-priori-error-analysis}

In this section, we first derive the C\'ea estimate for the Galerkin scheme \eqref{eq:CBF-augmented-discrete-formulation-H0} with the finite element subspaces given by \eqref{eq:subspaces-a}-\eqref{eq:subspaces-b},
and then use the approximation properties of the latter to establish the corresponding rates of convergence.
In fact, let $(\bsi,\bu)\in \bbH_0(\bdiv;\Omega)\times \bH^1(\Omega)$, with $\bu\in \bW_r$, be the unique solution of the problem \eqref{eq:CBF-augmented-weak-formulation}, and let
$(\bsi_h,\bu_h)\in \bbH_h^\bsi\times \bH_h^\bu$, with $\bu_h\in \wt{\bW}_r$, be the unique solution of the discrete problem \eqref{eq:CBF-augmented-discrete-formulation-H0}.
Then, we are interested in obtaining an {\it a priori} estimate for the error
\begin{equation*}
\|(\bsi,\bu) - (\bsi_h,\bu_h)\|^2
\,:=\, \|\bsi - \bsi_h\|^2_{\bdiv;\Omega} + \|\bu - \bu_h\|^2_{1,\Omega} \,.
\end{equation*}
For this purpose, we establish next an ad-hoc Strang-type estimate.
Hereafter, given a subspace $H_h$ of a generic Hilbert space $(H,\|\cdot\|_H)$, we set as usual
\begin{equation*}
\dist(x,H_h) \,:=\, \inf_{x_h\in H_h} \|x - x_h\|_H \quad \mbox{for all } x\in H\,.
\end{equation*}

\begin{lem}\label{lem:Strang-lemma}
Let $H$ be a Hilbert space, $F\in H'$, and let $a: H \times H \to \R$ be a bounded and $H$-elliptic bilinear form, with respective constants $\|a\|$ and $\alpha$.
In addition, let $\{ H_h \}_{h>0}$ be a sequence of finite dimensional subspaces of $H$, and for each $h>0$ consider a bounded bilinear form $a_h:H_h\times H_h\to \R$, with boundedness constant $\|a_h\|$ independent of $h$.
Assume that the family $\{ a_h \}_{h>0}$ is uniformly elliptic, that is, there exists a constant $\wt{\alpha}>0$, independent of $h$, such that
\begin{equation*}
a_h(v_h,v_h) \,\geq\, \wt{\alpha}\,\|v_h\|^2_H \quad \forall\,v_h\in H_h\,,\quad \forall\,h>0\,.
\end{equation*}
In turn, let $u\in H$ and $u_h\in H_h$ such that
\begin{equation}\label{eq:Strang-problems}
a(u,v)\,=\, F(v) \quad \forall\,v \in H
\qan
a_h(u_h,v_h) \,=\, F(v_h)\quad \forall\,v_h \in H_h\,.
\end{equation}
Then, for each $h>0$, there holds
\begin{equation*}
\|u - u_h\|_H \,\leq\, C_{S,1}\,\dist(u,H_h) + C_{S,2}\,\sup_{0\neq v_h\in H_h} \frac{\big| a(u,v_h) - a_h(u,v_h) \big|}{\|v_h\|_H} \,,
\end{equation*}
where $C_{S,1}$ and $C_{S,2}$ are the positive constants given by 
\begin{equation*}
C_{S,1} \,:=\, \left(1 + \frac{2\,\|a\|}{\wt{\alpha}} + \frac{\|a_h\|}{\wt{\alpha}} \right) \qan 
C_{S,2} \,:=\, \frac{1}{\wt{\alpha}} \,.
\end{equation*}
\end{lem}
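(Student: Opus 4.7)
The plan is to follow the classical argument behind Strang's first lemma: split $u - u_h$ through an arbitrary intermediate element $w_h \in H_h$, control the purely discrete difference $w_h - u_h$ by means of the uniform coercivity of $a_h$, and then isolate exactly the consistency quantity $|a(u,v_h) - a_h(u,v_h)|$ via a careful algebraic identity that produces both $\|a\|$ and $\|a_h\|$ in the final constant.

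Concretely, I would fix $w_h \in H_h$ and start from the triangle inequality $\|u - u_h\|_H \leq \|u - w_h\|_H + \|w_h - u_h\|_H$. For the second summand, I set $v_h := w_h - u_h \in H_h$ and apply the uniform $a_h$-ellipticity together with the two problems in \eqref{eq:Strang-problems}:
\begin{equation*}
\wt{\alpha}\,\|w_h - u_h\|_H^2 \;\leq\; a_h(w_h - u_h, v_h) \;=\; a_h(w_h, v_h) - F(v_h) \;=\; a_h(w_h, v_h) - a(u, v_h).
\end{equation*}

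The key algebraic step is then to rewrite the right-hand side, by adding and subtracting $a_h(u,v_h)$ and $a(w_h - u, v_h)$, as
\begin{equation*}
a_h(w_h, v_h) - a(u, v_h) \;=\; \bigl(a_h - a\bigr)(u, v_h) \;+\; \bigl(a_h - a\bigr)(w_h - u, v_h) \;+\; a(w_h - u, v_h),
\end{equation*}
where $a_h(u,\cdot)$ is understood as the natural extension making the consistency supremum in the statement meaningful. Bounding the three summands separately by the consistency supremum, by $(\|a\| + \|a_h\|)\,\|u - w_h\|_H\,\|v_h\|_H$, and by $\|a\|\,\|u - w_h\|_H\,\|v_h\|_H$, and then dividing by $\|w_h - u_h\|_H = \|v_h\|_H$, one obtains
\begin{equation*}
\wt{\alpha}\,\|w_h - u_h\|_H \;\leq\; \bigl(\|a_h\| + 2\|a\|\bigr)\,\|u - w_h\|_H \;+\; \sup_{0 \neq v_h \in H_h} \frac{|a(u,v_h) - a_h(u,v_h)|}{\|v_h\|_H}.
\end{equation*}

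Combining this with the triangle inequality and taking the infimum over $w_h \in H_h$ yields the claim with $C_{S,1} = 1 + \frac{2\|a\|}{\wt{\alpha}} + \frac{\|a_h\|}{\wt{\alpha}}$ and $C_{S,2} = \frac{1}{\wt{\alpha}}$ exactly as stated. I do not anticipate any real obstacle here: the only subtle point is that the three-term splitting above is chosen precisely so that the coefficients $2$ in front of $\|a\|$ and $1$ in front of $\|a_h\|$ in $C_{S,1}$ come out correctly; a naive decomposition would yield a weaker (but still correct) bound involving only $\|a_h\|/\wt{\alpha}$.
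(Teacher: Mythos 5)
Your proof is correct and recovers exactly the constants $C_{S,1}$ and $C_{S,2}$ of the statement; it is precisely the classical Strang-type argument (coercivity of $a_h$ applied to $w_h-u_h$, plus the three-term splitting that relocates the consistency term from $w_h$ to $u$) that the paper itself only sketches by reference to \cite[Lemma 5.1]{cgos2020} and \cite[Theorem 11.1]{Roberts-Thomas}, with the discrete inf-sup condition replaced by uniform ellipticity. The only point worth keeping explicit, which you already flag, is that the terms $a_h(u,v_h)$ and $a_h(w_h-u,v_h)$ require $a_h$ to extend boundedly (with the same constant $\|a_h\|$) to $H\times H_h$ — harmless here since in the application $\bA_{\bu_h}$ is defined and bounded on the whole product space.
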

\begin{proof}
It is basically a suitable modification of the proof of \cite[Lemma 5.1]{cgos2020}, which in turn,
is a modification of \cite[Theorem 11.1]{Roberts-Thomas}.
We omit further details and just stress that the inf-sup conditions of the respective linear operator $a_h$ from \cite[Lemma 5.1]{cgos2020} is now replaced by the corresponding uniform ellipticity of the present bilinear form $a_h$.
\end{proof}

%

We now establish the main result of this section.
\begin{thm}\label{thm:error-sighuh}
Assume that the data $\f\in \bL^2(\Omega)$ and $\bu_\rD\in \bH^{1/2}(\Gamma)$ satisfy
\begin{equation}\label{eq:data-assumption-Cea}
\big(1 + 2^{\rp-3}c_\rp\big)\,\frac{C_{\bF}}{\alpha_\bA\,r_0}\,
\Big\{ \|\f\|_{0,\Omega} + \|\bu_\rD\|_{1/2,\Gamma} + \|\bu_\rD\|_{0,\Gamma} \Big\} 
\,\leq\, \frac{1}{2} \,.
\end{equation}
Then, there exists a positive constant $C$, independent of $h$, such that
\begin{equation}\label{eq:Cea-estimate}
\|(\bsi,\bu)-(\bsi_h,\bu_h)\|
\,\leq\, C\,\Big\{ \dist(\bsi,\bbH_h^\bsi) + \dist(\bu,\bH_h^\bu) \Big\} \,.
\end{equation}
\end{thm}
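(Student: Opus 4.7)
The plan is to derive \eqref{eq:Cea-estimate} as an application of the abstract Strang-type Lemma \ref{lem:Strang-lemma} together with an absorption argument, where the data smallness assumption \eqref{eq:data-assumption-Cea} plays the crucial role.

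First, I would set $H := \bbH_0(\bdiv;\Omega)\times \bH^1(\Omega)$, $H_h := \bbH_h^\bsi\times \bH_h^\bu$, and consider the two bilinear forms $a := \bA_\bu$ (defined on $H\times H$) and $a_h := \bA_{\bu_h}$ (restricted to $H_h\times H_h$). Since both $\bu\in \bW_r$ and $\bu_h\in \wt{\bW}_r$ satisfy the bound required in Lemma \ref{lem:T-well-defined}, the estimates \eqref{eq:ellipticity-Aw} and \eqref{eq:continuity-Aw} apply uniformly, giving ellipticity with constant $\alpha_\bA/2$ and boundedness with a constant independent of $h$. Moreover, \eqref{eq:CBF-augmented-weak-formulation} and \eqref{eq:CBF-augmented-discrete-formulation-H0} fit exactly the structure \eqref{eq:Strang-problems}. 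Applying Lemma \ref{lem:Strang-lemma} thus yields
\begin{equation*}
\|(\bsi,\bu)-(\bsi_h,\bu_h)\|
\le C_{S,1}\,\dist((\bsi,\bu),H_h)
+ C_{S,2}\sup_{(\btau_h,\bv_h)\neq 0}
\frac{\bigl|(\bA_\bu-\bA_{\bu_h})((\bsi,\bu),(\btau_h,\bv_h))\bigr|}{\|(\btau_h,\bv_h)\|}.
\end{equation*}

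Next, since $\bA_\bw = \bA + \bB_\bw$, the consistency term reduces to $(\bB_\bu-\bB_{\bu_h})((\bsi,\bu),\cdot)$, which by the definition of $\bB_\bw$ in \eqref{eq:bilinear-form-Bw} involves the differences $((\bu-\bu_h)\otimes\bu)^\rd$ and $(|\bu|^{\rp-2}-|\bu_h|^{\rp-2})\bu$. I would bound the first using \eqref{eq:continuity-injection-H1-L4} and the second using \eqref{eq:glowinsky-2} followed by \eqref{eq:continuity-injection-H1-Lq}, taking advantage of $\|\bu\|_{1,\Omega},\|\bu_h\|_{1,\Omega}\le r_0$. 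This yields, arguing as in Lemma \ref{lem:continuity-compactness-T},
\begin{equation*}
\bigl|(\bB_\bu-\bB_{\bu_h})((\bsi,\bu),(\btau_h,\bv_h))\bigr|
\le \frac{\alpha_\bA}{2}\,(1+2^{\rp-3}c_\rp)\,\frac{\|\bu\|_{1,\Omega}}{r_0}\cdot\frac{1}{\tfrac12\alpha_\bA}\cdot\|\bu-\bu_h\|_{1,\Omega}\,\|(\btau_h,\bv_h)\|
\end{equation*}
(up to the standard manipulations with $r_1,r_2$), so that $C_{S,2}$ times this bound has the form $M\,\|\bu-\bu_h\|_{1,\Omega}\,\|(\btau_h,\bv_h)\|$ with
\begin{equation*}
M := (1+2^{\rp-3}c_\rp)\,\frac{\|\bu\|_{1,\Omega}}{r_0}.
\end{equation*}

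Then I would invoke the a priori bound \eqref{eq:dependence-continuous-CBF} for $\|\bu\|_{1,\Omega}$ together with the data assumption \eqref{eq:data-assumption-Cea}, which together imply $M\le 1/2$. Using also that $\|\bu-\bu_h\|_{1,\Omega}\le \|(\bsi,\bu)-(\bsi_h,\bu_h)\|$, the Strang estimate becomes
\begin{equation*}
\|(\bsi,\bu)-(\bsi_h,\bu_h)\|
\le C_{S,1}\,\dist((\bsi,\bu),H_h) + \tfrac12\,\|(\bsi,\bu)-(\bsi_h,\bu_h)\|,
\end{equation*}
so that absorbing the last term on the left and using the product-space identity $\dist((\bsi,\bu),H_h)^2 = \dist(\bsi,\bbH_h^\bsi)^2 + \dist(\bu,\bH_h^\bu)^2$ yields \eqref{eq:Cea-estimate}.

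The main technical obstacle is the consistency estimate for $\bB_\bu-\bB_{\bu_h}$: one has to carefully carry the factors $1/r_1$, $1/r_2$, $\|\bi_4\|$, $\|\bi_\rq\|$ so that, after pairing with the bound for $\|\bu\|_{1,\Omega}$, the resulting constant reproduces exactly the left-hand side of \eqref{eq:data-assumption-Cea}. Once that is done, the absorption step is automatic, and the final estimate \eqref{eq:Cea-estimate} drops out with $C$ depending only on $C_{S,1}$, the physical parameters, and the injection constants.
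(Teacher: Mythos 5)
Your proposal follows essentially the same route as the paper's proof: the Strang-type Lemma \ref{lem:Strang-lemma} applied with $a=\bA_{\bu}$ and $a_h=\bA_{\bu_h}$ (both uniformly elliptic and bounded since $\bu\in\bW_r$ and $\bu_h\in\wt{\bW}_r$), the consistency term $(\bB_{\bu}-\bB_{\bu_h})((\bsi,\bu),\cdot)$ estimated via \eqref{eq:glowinsky-2} and the Sobolev injections, and finally absorption using the a priori bound \eqref{eq:Tw-bound} together with the data assumption \eqref{eq:data-assumption-Cea}. The only caveat is a dropped factor of $\tfrac12$ in your definition of $M$ (the careful computation, as in the paper, gives $M=\tfrac12\,(1+2^{\rp-3}c_\rp)\,\|\bu\|_{1,\Omega}/r_0$, which is precisely what yields $M\le\tfrac12$ rather than $M\le 1$); this is the ``standard manipulation'' you flag and does not affect the validity of the argument.
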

\begin{proof}
First, note that the continuous and discrete problems \eqref{eq:CBF-augmented-weak-formulation} 
and \eqref{eq:CBF-augmented-discrete-formulation-H0} have the structure of the ones in 
\eqref{eq:Strang-problems}.
In addition, using the fact that $\bu\in \bW_r$ and $\bu_h\in \wt{\bW}_r$, we observe from \eqref{eq:ellipticity-Aw} and \eqref{eq:continuity-Aw} that the bilinear forms $\bA_{\bu}$ and $\bA_{\bu_h}$ are elliptic and bounded with the same constants $\alpha_\bA/2$ and $C_\bA+ \alpha_{\bA}/2$, respectively.
In turn, $\bF$ and $\bF|_{(\bbH_h^\bsi\times \bH_h^\bu)'}$ are bounded and linear functional in $\bbH_0(\bdiv;\Omega)\times \bH^1(\Omega)$ and $\bbH_h^\bsi\times \bH_h^\bu$, respectively. 
Thus, as a direct application of Lemma \ref{lem:Strang-lemma}, we obtain
\begin{equation}\label{eq:Sob-abs-apply}
\begin{array}{l}
\ds \|(\bsi,\bu) - (\bsi_h,\bu_h)\|
\,\leq\, C_{S,1}\,\Big\{ \dist(\bsi,\bbH_h^\bsi) + \dist(\bu,\bH_h^\bu) \Big\} \\[2ex]
\ds\quad +\, C_{S,2}\,\sup_{\0\neq (\btau_h,\bv_h)\in \bbH_h^\bsi\times \bH_h^\bu} \frac{\big|\bB_\bu((\bsi,\bu),(\btau_h\bv_h)) - \bB_{\bu_h}((\bsi,\bu),(\btau_h,\bv_h))\big|} {\|(\btau_h,\bv_h)\|} \,,
\end{array}
\end{equation}
where
\begin{equation}\label{eq:constants-CS-1-2}
C_{S,1} \,:=\, 4 + 6\,\frac{C_\bA}{\alpha_\bA} \qan
C_{S,2} \,:=\, \frac{2}{\alpha_\bA} \,.
\end{equation}
Next, proceeding as in \eqref{eq:continuity-T-1}, using \eqref{eq:glowinsky-2} and the continuity of $\bB_{\bw}$ (cf. \eqref{eq:continuity-Bw}), it follows that
\begin{equation}\label{eq:Cea-estimate-Bu-Buh}
\begin{array}{l}
\ds \big|\bB_\bu((\bsi,\bu),(\btau_h\bv_h)) - \bB_{\bu_h}((\bsi,\bu),(\btau_h,\bv_h))\big| \\[1ex] 
\ds\quad
\leq\, \|\bu\|_{1,\Omega}\bigg(\frac{1}{\nu}\,(1 + \kappa_1) \|\bi_4\|^2  
+ \frac{\tF}{\alpha}\,c_\rp\,\|\bi_{\rq}\|^{\rp-1}\big( \|\bu\|_{1,\Omega} + \|\bu_h\|_{1,\Omega} \big)^{\rp-3} \bigg)
\|\bu - \bu_h\|_{1,\Omega}
\|(\btau_h,\bv_h)\| \,.
\end{array}
\end{equation}
Thus, replacing \eqref{eq:Cea-estimate-Bu-Buh} back into \eqref{eq:Sob-abs-apply}, 
using the explicit expression of $C_{S,2}$ (cf. \eqref{eq:constants-CS-1-2}),
and the fact that $\|\bu\|_{1,\Omega} + \|\bu_h\|_{1,\Omega} \leq 2\,r_2$, since $\bu\in \bW_r$ and $\bu_h\in \wt{\bW}_r$, with $r\in (0,r_0]$ and $r_0:=\min\{r_1,r_2\}$ (cf. \eqref{eq:radius-r0-r1-r2}), we find that
\begin{equation*}
\begin{array}{l}
\ds \|(\bsi,\bu) - (\bsi_h,\bu_h)\|
\,\leq\, C_{S,1}\,\Big\{ \dist(\bsi,\bbH_h^\bsi) + \dist(\bu,\bH_h^\bu) \Big\} \\[2ex]
\ds\quad +\, \frac{1}{2}\,\|\bu\|_{1,\Omega}\bigg( \frac{1}{r_1}  
+ \frac{2^{\rp-3}\,c_\rp}{r_2} \bigg)
\|(\bsi,\bu) - (\bsi_h,\bu_h)\| \,.
\end{array}
\end{equation*}
Finally, using the fact that $1/r_1, 1/r_2$ are bounded by $1/r_0$, and bounding now $\|\bu\|_{1,\Omega}$ as in \eqref{eq:Tw-bound} instead of directly by $r$, we get
\begin{equation}\label{eq:error-bound-data-term}
\begin{array}{l}
\ds \|(\bsi,\bu) - (\bsi_h,\bu_h)\|
\,\leq\, C_{S,1}\,\Big\{ \dist(\bsi,\bbH_h^\bsi) + \dist(\bu,\bH_h^\bu) \Big\} \\[2ex]
\ds\quad +\, \big(1 + 2^{\rp-3}\,c_\rp \big)\,\frac{C_{\bF}}{\alpha_\bA\,r_0}\,
\Big\{ \|\f\|_{0,\Omega} + \|\bu_\rD\|_{1/2,\Gamma} + \|\bu_\rD\|_{0,\Gamma} \Big\}\,
\|(\bsi,\bu) - (\bsi_h,\bu_h)\|\,,
\end{array}
\end{equation}
which, together with the data assumption \eqref{eq:data-assumption-Cea}, implies \eqref{eq:Cea-estimate} and conclude the proof.
\end{proof}

Now, in order to provide the theoretical rate of convergence of the Galerkin scheme \eqref{eq:CBF-augmented-discrete-formulation-H0}, we recall the approximation properties of the subspaces involved (see, e.g., \cite{Brezzi-Fortin,Ciarlet,Gatica}).
Note that each one of them is named after the unknown to which it is applied later on.

\medskip
\noindent $(\mathbf{AP}_h^\bsi)$ For each $l\in (0,k+1]$ and for each $\btau\in \bbH^l\cap\bbH_{0}(\bdiv;\Omega)$ with $\bdiv(\btau)\in \bH^l(\Omega)$, there holds
\begin{equation*}
\dist(\btau,\bbH_h^\bsi) \,:=\, \inf_{\btau_h\in\bbH_h^\bsi} \|\btau - \btau_h\|_{\bdiv;\Omega}
\,\leq\, C\,h^l\,\Big\{ \|\btau\|_{l,\Omega} + \|\bdiv(\btau)\|_{l,\Omega} \Big\}\,.
\end{equation*}

\noindent $(\mathbf{AP}_h^\bu)$ For each $l\in [0,k+1]$ and for each $\bv\in \bH^{l+1}(\Omega)$, there holds
\begin{equation*}
\dist(\bv,\bH_h^\bu) \,:=\, \inf_{\bv_h\in\bH_h^\bu} \|\bv - \bv_h\|_{1,\Omega}
\,\leq\, C\,h^l\,\|\bv\|_{l+1,\Omega} \,.
\end{equation*}

The following theorem provides the theoretical optimal rate of convergence of the Galerkin scheme \eqref{eq:CBF-augmented-discrete-formulation-H0}, under suitable regularity assumptions on the exact solution.
\begin{thm}\label{thm:approximation}
In addition to the hypotheses of Theorems \ref{thm:fixed-point-T}, \ref{th:contractive}, and \ref{thm:error-sighuh}, assume that there exists $l\in (0,k+1]$ such that $\bsi \in \bbH^l(\Omega)\cap \bbH_0(\bdiv;\Omega)$, $\bdiv(\bsi)\in \bH^l(\Omega)$ and $\bu\in\bH^{l+1}(\Omega)$. 
Then, there exists $C>0$, independent of $h$, such that
\begin{equation}
\|(\bsi,\bu) - (\bsi_h,\bu_h)\|
\,\leq\, C\,h^l\,\Big\{ \|\bsi\|_{l,\Omega} + \|\bdiv(\bsi)\|_{l,\Omega} + \|\bu\|_{l+1,\Omega} \Big\} \,.
\end{equation}
\end{thm}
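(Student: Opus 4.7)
The plan is to combine the Céa-type estimate already obtained in Theorem \ref{thm:error-sighuh} with the approximation properties $(\mathbf{AP}_h^\bsi)$ and $(\mathbf{AP}_h^\bu)$ recalled just above the statement. All the analytical work has essentially been done: the well-posedness hypotheses of Theorems \ref{thm:fixed-point-T} and \ref{th:contractive} guarantee that the continuous and discrete solutions exist, belong to $\bW_r$ and $\wt{\bW}_r$ respectively, and the small-data assumption \eqref{eq:data-assumption-Cea} absorbs the nonlinear cross term in \eqref{eq:error-bound-data-term} into the left-hand side, leaving the clean Céa bound \eqref{eq:Cea-estimate}.

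First I would invoke \eqref{eq:Cea-estimate} to reduce the task to estimating $\dist(\bsi,\bbH_h^\bsi)$ and $\dist(\bu,\bH_h^\bu)$. The regularity hypotheses $\bsi\in\bbH^l(\Omega)\cap\bbH_0(\bdiv;\Omega)$, $\bdiv(\bsi)\in\bH^l(\Omega)$, $\bu\in\bH^{l+1}(\Omega)$ with $l\in(0,k+1]$ are precisely those needed to apply $(\mathbf{AP}_h^\bsi)$ and $(\mathbf{AP}_h^\bu)$, yielding
\begin{equation*}
\dist(\bsi,\bbH_h^\bsi)\,\leq\, C\,h^l\,\Big\{\|\bsi\|_{l,\Omega}+\|\bdiv(\bsi)\|_{l,\Omega}\Big\}
\qan
\dist(\bu,\bH_h^\bu)\,\leq\, C\,h^l\,\|\bu\|_{l+1,\Omega}\,.
\end{equation*}

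Substituting these two bounds into \eqref{eq:Cea-estimate} and collecting constants would immediately deliver the desired estimate
\begin{equation*}
\|(\bsi,\bu)-(\bsi_h,\bu_h)\|\,\leq\, C\,h^l\,\Big\{\|\bsi\|_{l,\Omega}+\|\bdiv(\bsi)\|_{l,\Omega}+\|\bu\|_{l+1,\Omega}\Big\}\,,
\end{equation*}
with $C$ depending only on $\nu,\alpha,\tF,\kappa_1,\kappa_2$, the domain, and the constants appearing in the approximation properties, but independent of $h$.

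Since the nonlinear machinery has already been handled inside Theorem \ref{thm:error-sighuh}, there is no real obstacle here: the argument is a one-line application of Céa plus standard Raviart--Thomas/Lagrange interpolation error estimates. The only subtlety worth noting is that the upper bound $l\leq k+1$ is dictated by $(\mathbf{AP}_h^\bsi)$, so the optimal order attainable is $h^{k+1}$, consistent with the polynomial degrees chosen for $\bbH_h^\bsi$ and $\bH_h^\bu$ in \eqref{eq:subspaces-a}--\eqref{eq:subspaces-b}.
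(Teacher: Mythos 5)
Your proposal is correct and follows exactly the paper's own argument: the paper likewise proves this theorem by a direct application of the C\'ea estimate of Theorem \ref{thm:error-sighuh} combined with the approximation properties $(\mathbf{AP}_h^\bsi)$ and $(\mathbf{AP}_h^\bu)$. No further comment is needed.
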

\begin{proof}
The result follows from a direct application of Theorem \ref{thm:error-sighuh} and the approximation properties of the discrete subspaces. Further details are omitted.
\end{proof}


\section{A residual-based \textit{a posteriori} error estimator}\label{eq:first-a-posteriori-error-estimator}

In this section we derive a reliable and efficient residual based {\it a posteriori}
error estimator for the Galerkin scheme \eqref{eq:CBF-augmented-discrete-formulation-H0}.
To this end, in what follows we employ the notations and results from Appendix \ref{sec:appendix-B}, 
and assume the hypotheses from Theorems \ref{thm:fixed-point-T} and \ref{th:contractive}, 
which guarantee the existence of unique solutions 
$(\bsi,\bu)\in \bbH_0(\bdiv;\Omega)\times\bH^1(\Omega)$ and 
$(\bsi_h,\bu_h)\in \bbH_{h}^\bsi\times \bH_h^\bu$ of the continuous and discrete problems 
\eqref{eq:CBF-augmented-weak-formulation} and \eqref{eq:CBF-augmented-discrete-formulation-H0}, 
respectively. 
Then a first global {\it a posteriori} error estimator is defined by
\begin{equation}\label{eq:global-estimator-1}
\Theta_1 \,:=\, \left\{ \sum_{T\in \cT_h} \Theta_{1,T}^2 \right\}^{1/2} \,,
\end{equation}
where, for each $T\in \cT_h$, the local error indicator $\Theta^2_{1,T}$ is defined as follows:
\begin{equation}\label{eq:local-estimator-1}
\begin{array}{l}
\ds \Theta_{1,T}^2 \,:=\, \Big\|\nabla \bu_h - \frac{1}{\nu}\big(\bsi_h + (\bu_h\otimes\bu_h)\big)^\rd\Big\|_{0,T}^2
+ \|\alpha\,\bu_h + \tF\,|\bu_h|^{\rp-2}\bu_h - \bdiv(\bsi_h) - \f\|_{0,T}^2 \\[2ex]
\ds\quad +\, h_T^2\,\left\| \ubcurl\left(\frac{1}{\nu}\big(\bsi_h + (\bu_h\otimes\bu_h)\big)^\rd\right) \right\|_{0,T}^2 
+ \sum_{e\in \cE_{h,T}(\Omega)} h_e\, \left\|\jump{\ubgamma_*\left(\frac{1}{\nu}\big(\bsi_h + (\bu_h\otimes\bu_h)\big)^\rd\right)} \right\|_{0,e}^2 \\[2ex]
\ds\quad +\, \sum_{e\in\cE_{h,T}(\Gamma)} h_e\,\left\|\ubgamma_*\left(\nabla \bu_\rD - \frac{1}{\nu}\big( \bsi_h + (\bu_h\otimes\bu_h)\big)^\rd \right)\right\|_{0,e}^2
+ \sum_{e\in\cE_{h,T}(\Gamma)} \|\bu_\rD-\bu_h\|_{0,e}^2 \,.
\end{array}
\end{equation}
Notice that the fifth term in \eqref{eq:local-estimator-1} requires $\ubgamma_{*}(\nabla\bu_\rD)|_e\in \bL^2(e)$ for all $e\in \cE_h(\Gamma)$, which is overcome below (cf. Lemma \ref{lem:reliability-R1}) by simply assuming that $\bu_\rD\in \bH^1(\Gamma)$.
We observe in advance that alternatively to \eqref{eq:global-estimator-1} a second {\it a posteriori} error estimator for the Galerkin scheme \eqref{eq:CBF-augmented-discrete-formulation-H0} is derived and analyzed in Appendix \ref{sec:appendix-C}.

The main goal of the present section is to establish, under suitable assumptions, 
the reliability and efficiency of $\Theta_1$. We begin with the reliability of the estimator.

\subsection{Reliability}\label{sec:reliability-Theta-1}

The main result of this section is stated in the following theorem.
\begin{thm}\label{thm:reliability-Theta-1}
Assume that the data $\f$ and $\bu_\rD$ satisfy \eqref{eq:data-assumption-Cea}.
Then, there exists a positive constant $C_{\tt rel}$, independent of $h$, such that
\begin{equation}\label{eq:reliability-Theta-1}
\|(\bsi,\bu) - (\bsi_h,\bu_h)\| \,\leq\, C_{\tt rel}\,\Theta_1 \,.
\end{equation}
\end{thm}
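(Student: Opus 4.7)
The plan is to follow the standard residual-based template for augmented mixed methods in Hilbert spaces, adapted so as to absorb the convective and Forchheimer nonlinearities. \emph{Stage 1}: introduce the residual functional $\cR(\btau,\bv):=\bF(\btau,\bv)-\bA_{\bu_h}((\bsi_h,\bu_h),(\btau,\bv))$, which vanishes on $\bbH_h^{\bsi}\times\bH_h^{\bu}$ thanks to \eqref{eq:CBF-augmented-discrete-formulation-H0}. Writing $\bA_{\bu}=\bA_{\bu_h}+(\bB_{\bu}-\bB_{\bu_h})$, the continuous equation \eqref{eq:CBF-augmented-weak-formulation} yields the error identity $\bA_{\bu}((\bsi-\bsi_h,\bu-\bu_h),(\btau,\bv))=\cR(\btau,\bv)-(\bB_{\bu}-\bB_{\bu_h})((\bsi_h,\bu_h),(\btau,\bv))$. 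Combining the ellipticity \eqref{eq:ellipticity-Aw} of $\bA_{\bu}$ (available since $\bu\in\bW_r$) with a supremum argument, bounding the perturbation $(\bB_{\bu}-\bB_{\bu_h})((\bsi_h,\bu_h),\cdot)$ using \eqref{eq:continuity-Bw}, the nonlinear inequality \eqref{eq:glowinsky-2}, the a priori bound \eqref{eq:Tdw-bound} for $\|\bu_h\|_{1,\Omega}$, and the smallness condition \eqref{eq:data-assumption-Cea} to absorb it on the left, I obtain
\begin{equation*}
\|(\bsi-\bsi_h,\bu-\bu_h)\| \,\le\, C\,\sup_{(\btau,\bv)\ne\0}\frac{|\cR(\btau,\bv)|}{\|(\btau,\bv)\|}.
\end{equation*}

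\emph{Stage 2}: algebraic manipulation of $\cR$. I split $\cR(\btau,\bv)=\cR_1(\btau)+\cR_2(\bv)$ by separating contributions of the two test functions and set $\cE_h:=\nabla\bu_h-\tfrac{1}{\nu}(\bsi_h+\bu_h\otimes\bu_h)^{\rd}$ and $\cM_h:=\alpha\bu_h+\tF|\bu_h|^{\rp-2}\bu_h-\bdiv(\bsi_h)-\f$. A direct expansion gives $\cR_2(\bv)=-\kappa_1\int_\Omega\cE_h:\nabla\bv+\kappa_2\int_\Gamma(\bu_\rD-\bu_h)\cdot\bv$; after using the rearrangement $\bdiv(\bsi_h)+\f-\tF|\bu_h|^{\rp-2}\bu_h=\alpha\bu_h-\cM_h$ and integrating $\int_\Omega\bu_h\cdot\bdiv(\btau)$ by parts (legitimate because $\bu_h\in\bH^1(\Omega)$), also
\begin{equation*}
\cR_1(\btau) \,=\, \int_\Omega\cE_h:\btau \,+\, \frac{1}{\alpha}\int_\Omega\cM_h\cdot\bdiv(\btau) \,+\, \pil\btau\bn,\bu_\rD-\bu_h\pir_\Gamma.
\end{equation*}
Direct Cauchy--Schwarz on $\cR_2$ and on the middle term of $\cR_1$, together with the trace inequality for the boundary part of $\cR_2$, already recover the first, second, and sixth contributions to $\Theta_{1,T}^2$ times $\|(\btau,\bv)\|$.

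\emph{Stage 3}: Helmholtz decomposition and localization of the remaining piece $\int_\Omega\cE_h:\btau+\pil\btau\bn,\bu_\rD-\bu_h\pir_\Gamma$. I would apply the stable Helmholtz decomposition $\btau=\nabla\bz+\ubcurl\bchi$ from Appendix~B, with $\|\bz\|_{1,\Omega}$ and $\|\bchi\|_{1,\Omega}$ bounded by $C\|\btau\|_{\bdiv;\Omega}$, and then use Galerkin orthogonality $\cR_1(\btau_h)=0$ with $\btau_h:=\nabla\bz_h+\ubcurl\bchi_h$ constructed from a Raviart--Thomas interpolation of $\nabla\bz$ and a Cl\'ement interpolation of $\bchi$. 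Element-by-element integration by parts of $\int_\Omega\cE_h:\ubcurl(\bchi-\bchi_h)$ converts the volume part into an interior curl residual $\ubcurl(\cE_h)$ plus tangential traces on $\partial T$, which combine across interfaces into the third and fourth terms of $\Theta_{1,T}^2$; the boundary term $\pil\btau\bn,\bu_\rD-\bu_h\pir_\Gamma$, after invoking the tangential identity requiring $\bu_\rD\in\bH^1(\Gamma)$, produces precisely the fifth indicator $\ubgamma_*(\nabla\bu_\rD-\tfrac{1}{\nu}(\bsi_h+\bu_h\otimes\bu_h)^{\rd})$ on $\Gamma$. Summing over $T\in\cT_h$ and applying the local approximation properties of the two interpolants from Appendix~B then yields $\sup|\cR|/\|(\btau,\bv)\|\le C\,\Theta_1$, whence \eqref{eq:reliability-Theta-1}. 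The hard part will be Stage~3: performing the tensor-valued integration by parts with $\ubcurl$ correctly, matching the tangential boundary trace to $\nabla\bu_\rD$ to produce the fifth indicator, and verifying that the interpolation, Helmholtz, ellipticity, and smallness constants remain compatible with the absorption performed in Stage~1.
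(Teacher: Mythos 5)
Your proposal follows essentially the same route as the paper's own proof (Lemmas \ref{lem:preliminary-reliability}--\ref{lem:reliability-R1}): the residual functional $\cR$ combined with the uniform ellipticity of $\bA_{\bu}$ and absorption of the nonlinear perturbation under \eqref{eq:data-assumption-Cea}, direct Cauchy--Schwarz for $\cR_2$ and for the momentum part of $\cR_1$, and then the stable Helmholtz decomposition with Raviart--Thomas/Cl\'ement interpolation, Galerkin orthogonality, and local integration by parts to localize the remaining terms. Only two small repairs are needed: the decomposition of Lemma \ref{lem:helmholtz-decomposition} gives $\bz\in\bH^{2}(\Omega)$ with $\|\bz\|_{2,\Omega}+\|\bchi\|_{1,\Omega}\le C_{\tt Hel}\|\btau\|_{\bdiv;\Omega}$ (the $\bH^1$ control of $\bz$ you wrote would not suffice to define $\bPi_h^k(\nabla\bz)$ or to invoke Lemma \ref{lem:Phih-properties}), and the discrete test function must be corrected by $\jmath_0\,\bbI$ so that it lies in $\bbH_0(\bdiv;\Omega)$, which is harmless because $\cR_1(\bbI)=0$ by the compatibility condition \eqref{eq:compatibility-condition}.
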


We begin the derivation of \eqref{eq:reliability-Theta-1} with a preliminary lemma, 
for which we first note that, using the fact that $\bu\in \bW_r$ and \eqref{eq:ellipticity-Aw}, 
we have that the bilinear form $\bA_\bu$ is uniformly elliptic on 
$\bbH_{\tt CBF} \,:=\, \bbH_0(\bdiv;\Omega)\times \bH^1(\Omega)$
with positive constant $\alpha_{\bA}/2$ independent of $h$. This implies that
\begin{equation}\label{eq:global-inf-sup}
\sup_{\0\neq(\btau,\bv)\in\bbH_{\tt CBF}} \frac{\bA_\bu((\bzeta,\bz),(\btau,\bv))}{\|(\btau,\bv)\|} 
\,\geq\, \frac{\alpha_{\bA}}{2}\,\|(\bzeta,\bz)\| \quad \forall\,(\bzeta,\bz)\in \bbH_{\tt CBF} \,.
\end{equation}

\begin{lem}\label{lem:preliminary-reliability}
Assume that the data $\f$ and $\bu_\rD$ satisfy \eqref{eq:data-assumption-Cea}. 
Then, there exists a positive constant $C$, independent of $h$, such that
\begin{equation}\label{eq:error-bounded-by-R}
\|(\bsi,\bu) - (\bsi_h,\bu_h)\| \,\leq\, C\,\sup_{\0\neq (\btau,\bv)\in \bbH_{\tt CBF}} \frac{\big| \cR(\btau,\bv) \big|}{\|(\btau,\bv)\|} \,,
\end{equation}
where $\cR :\bbH_{\tt CBF}\to \R$ is the residual functional given by
\begin{equation*}
\cR(\btau,\bv) \,:=\, \bF(\btau,\bv) - \bA_{\bu_h}((\bsi_h,\bu_h),(\btau,\bv)) \quad 
\forall\,(\btau,\bv) \in \bbH_{\tt CBF} \,.
\end{equation*}
\end{lem}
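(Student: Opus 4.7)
The plan is to derive (5.5) as a direct consequence of the global inf-sup condition \eqref{eq:global-inf-sup} applied to the error pair $(\bsi-\bsi_h,\bu-\bu_h)\in \bbH_{\tt CBF}$, after rewriting $\bA_\bu((\bsi-\bsi_h,\bu-\bu_h),(\btau,\bv))$ in terms of the computable residual $\cR$ plus a small, absorbable perturbation that comes from linearizing the convective and Forchheimer nonlinearities around $\bu$ versus $\bu_h$.

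Concretely, I would first use that $(\bsi,\bu)$ solves \eqref{eq:CBF-augmented-weak-formulation}, so $\bA_\bu((\bsi,\bu),(\btau,\bv))=\bF(\btau,\bv)$, and then add and subtract $\bA_{\bu_h}((\bsi_h,\bu_h),(\btau,\bv))$ to obtain
\[
\bA_\bu((\bsi-\bsi_h,\bu-\bu_h),(\btau,\bv)) \,=\, \cR(\btau,\bv) \,+\, \big(\bB_{\bu_h}-\bB_\bu\big)((\bsi_h,\bu_h),(\btau,\bv)),
\]
recalling from \eqref{eq:bilinear-form-Aw} that $\bA_{\bw}-\bA=\bB_\bw$. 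Expanding the second summand via the definition \eqref{eq:bilinear-form-Bw}, I get one term with factor $((\bu_h-\bu)\otimes\bu_h)^\rd$ tested against $\btau-\kappa_1\nabla\bv$, and another term of the form $(|\bu_h|^{\rp-2}-|\bu|^{\rp-2})\bu_h\cdot\bdiv(\btau)$. For the first I use Cauchy--Schwarz and $\bi_4$ as in \eqref{eq:continuous-injection-H1-L4}; for the second I apply \eqref{eq:glowinsky-2} together with H\"older with exponents $(\rq,\rq,\rq/(\rp-3),2)$ and the injection $\bi_\rq$ of \eqref{eq:continuous-injection-H1-Lq}, yielding an extra factor $(\|\bu\|_{1,\Omega}+\|\bu_h\|_{1,\Omega})^{\rp-3}$.

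Next, since $\bu\in \bW_r$ and $\bu_h\in \wt{\bW}_r$ with $r\leq r_0=\min\{r_1,r_2\}$, the definitions in \eqref{eq:radius-r0-r1-r2} give $\frac{1}{\nu}(1+\kappa_1)\|\bi_4\|^2\leq \alpha_\bA/(4r_1)$ and $\frac{\tF}{\alpha}\|\bi_\rq\|^{\rp-1}(\|\bu\|_{1,\Omega}+\|\bu_h\|_{1,\Omega})^{\rp-3}\leq 2^{\rp-3}\alpha_\bA/(4r_2)$. Bounding $\|\bu_h\|_{1,\Omega}$ via \eqref{eq:Tw-bound} (in its discrete form \eqref{eq:Tdw-bound}) and using $1/r_1,1/r_2\le 1/r_0$, this produces
\[
\big|(\bB_{\bu_h}-\bB_\bu)((\bsi_h,\bu_h),(\btau,\bv))\big| \,\le\, \tfrac{1}{2}\,\big(1+2^{\rp-3}c_\rp\big)\,\tfrac{C_\bF}{r_0}\,\big\{\|\f\|_{0,\Omega}+\|\bu_\rD\|_{1/2,\Gamma}+\|\bu_\rD\|_{0,\Gamma}\big\}\,\|\bu-\bu_h\|_{1,\Omega}\,\|(\btau,\bv)\|.
\]
Combining with \eqref{eq:global-inf-sup}, dividing by $\|(\btau,\bv)\|$ and taking supremum, the $\|\bu-\bu_h\|_{1,\Omega}\le \|(\bsi-\bsi_h,\bu-\bu_h)\|$ term is absorbed into the left-hand side thanks to \eqref{eq:data-assumption-Cea}, leaving the bound \eqref{eq:error-bounded-by-R} with $C=4/\alpha_\bA$.

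The only delicate point is step (3): handling $|\bu_h|^{\rp-2}\bu_h-|\bu|^{\rp-2}\bu$ so that the bound is linear in $\|\bu-\bu_h\|_{1,\Omega}$ and dimensionally compatible with the chosen $\rq=2(\rp-1)$. The right move is to apply \eqref{eq:glowinsky-2} and select the unique H\"older quadruple $(\rq,\rq,\rq/(\rp-3),2)$, which is precisely balanced because $2(\rp-1)/\rq+1/2=1$; any other grouping overshoots the regularity available and spoils the absorption argument.
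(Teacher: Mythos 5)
Your proposal is correct and follows essentially the same route as the paper: apply the uniform ellipticity (inf-sup) of $\bA_\bu$ to the error, add and subtract $\bA_{\bu_h}((\bsi_h,\bu_h),\cdot)$ to isolate $\cR$ plus the perturbation $(\bB_{\bu_h}-\bB_\bu)((\bsi_h,\bu_h),\cdot)$, bound the latter via \eqref{eq:glowinsky-2}, H\"older, and the injections $\bi_4,\bi_\rq$ exactly as in \eqref{eq:Cea-estimate-Bu-Buh}, and absorb it using \eqref{eq:data-assumption-Cea} to land on $C=4/\alpha_\bA$. The only blemish is the stated balance identity for the H\"older quadruple, which should read $(\rp-1)/\rq+1/2=1$ rather than $2(\rp-1)/\rq+1/2=1$; the quadruple $(\rq,\rq,\rq/(\rp-3),2)$ itself is correct.
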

\begin{proof}
First, applying the inf-sup condition \eqref{eq:global-inf-sup} to the error
$(\bzeta,\bz) = (\bsi - \bsi_h,\bu - \bu_h)$, adding and substracting 
$\bB_{\bu_h}((\bsi_h,\bu_h),(\btau,\bv))$, and using the first equation of 	
\eqref{eq:CBF-augmented-weak-formulation}, we deduce that
\begin{equation*}
\begin{array}{l}
\ds \frac{\alpha_{\bA}}{2}\,\|(\bsi - \bsi_h,\bu - \bu_h)\| 
\,\leq\, \sup_{\0\neq(\btau,\bv)\in\bbH_{\tt CBF}} \frac{\big|\cR(\btau,\bv)\big|}{\|(\btau,\bv)\|} 
+ \sup_{\0\neq (\btau,\bv)\in\bbH_{\tt CBF}} \frac{\big|\big(\bB_{\bu} - \bB_{\bu_h}\big)((\bsi_h,\bu_h),(\btau,\bv))\big|}{\|(\btau,\bv)\|} \,,
\end{array}
\end{equation*}
which, combined with the continuity bound of $\bB_{\bw}$ (cf. \eqref{eq:Cea-estimate-Bu-Buh}), \eqref{eq:glowinsky-2} and proceeding as in \eqref{eq:error-bound-data-term}, implies
\begin{equation*}
\begin{array}{l}
\ds \|(\bsi-\bsi_h,\bu-\bu_h)\| \,\leq\, \frac{2}{\alpha_{\bA}}\,\sup_{\0\neq (\btau,\bv)\in \bbH_{\tt CBF}} \frac{\big| \cR(\btau,\bv) \big|}{\|(\btau,\bv)\|} \\[3ex]
\ds\quad +\, \big( 1 + 2^{\rp-3}\,c_\rp\big)\,\frac{C_{\bF}}{\alpha_{\bA} r_0}\Big\{ \|\f\|_{0,\Omega} + \|\bu_\rD\|_{1/2,\Gamma} + \|\bu_\rD\|_{0,\Gamma} \Big\}\,\|\bu-\bu_h\|_{1;\Omega} \,,
\end{array}
\end{equation*}
which, together with the data assumption \eqref{eq:data-assumption-Cea}, 
yields \eqref{eq:error-bounded-by-R} with $C=4/\alpha_\bA$ concluding the proof.
\end{proof}

We now aim to bound the suprema in \eqref{eq:error-bounded-by-R}.
Indeed, in virtue of the definitions of the forms $\bA_\bw, \bA$ and $\bB_\bw$ (cf. \eqref{eq:bilinear-form-Aw}--\eqref{eq:bilinear-form-Bw}), we find that, for any $(\btau,\bv)\in \bbH_0(\bdiv;\Omega)\times \bH^1(\Omega)$, there holds
\begin{equation*}
\cR(\btau,\bv) \,=\, \cR_1(\btau) \,+\, \cR_2(\bv) \,,
\end{equation*}
where
\begin{equation}\label{eq:residual-functional-tau}
\begin{array}{l}
\ds \cR_1(\btau) \,=\, \langle \btau\bn,\bu_\rD\rangle_\Gamma 
- \frac{1}{\nu} \int_\Omega \big( \bsi_h + (\bu_h \otimes \bu_h)\big)^\rd:\btau 
- \int_\Omega \bu_h\cdot\bdiv(\btau) \\[2ex]
\ds\quad +\, \frac{1}{\alpha} \int_\Omega \big( \alpha\,\bu_h + \tF\,|\bu_h|^{\rp-2}\bu_h 
- \bdiv(\bsi_h) - \f \big)\cdot\bdiv(\btau) 
\end{array}
\end{equation}
\begin{equation}\label{eq:residual-functional-v}
\qan \cR_2(\bv) \,=\,  
- \kappa_1\int_\Omega \Big( \nabla \bu_h - \frac{1}{\nu}\big(\bsi_h + (\bu_h\otimes\bu_h)\big)^\rd \Big):\nabla \bv
+ \kappa_2 \int_\Gamma (\bu_\rD-\bu_h)\cdot\bv \,.
\end{equation}
Notice that for convenience of the subsequent analysis we have added and subtracted the term given by $\int_\Omega \bu_h\cdot\bdiv(\btau)$ in \eqref{eq:residual-functional-tau}.
Then, the supremum in \eqref{eq:error-bounded-by-R} can be bounded in terms of $\cR_1$ and $\cR_2$ as follows
\begin{equation}\label{eq:error-bound-by-R1-R2}
\|(\bsi,\bu) - (\bsi_h,\bu_h)\| 
\,\leq\, C\,\Big\{ \|\cR_1\|_{\bbH_0(\bdiv;\Omega)'}
+ \|\cR_2\|_{\bH^1(\Omega)'} \Big\} \,,
\end{equation}
and hence our next purpose is to derive suitable upper bounds for each one of the terms on the right-hand side of \eqref{eq:error-bound-by-R1-R2}.
We begin by establishing the corresponding estimate for $\cR_2$ (cf. \eqref{eq:residual-functional-v}),
which follows from a straightforward application of the Cauchy--Schwarz inequality.
\begin{lem}\label{lem:reliability-R2} 
There exists a positive constant $C$, independent of $h$, such that	
\begin{equation*}
\|\cR_2\|_{\bH^1(\Omega)'} \,\leq\, C\,\left\{
\sum_{T\in \cT_h} \left( \Big\|\nabla \bu_h - \frac{1}{\nu}\big(\bsi_h + (\bu_h \otimes \bu_h)\big)^\rd \Big\|^2_{0,T} 
+ \sum_{e\in \cE_{h,T}(\Gamma)} \|\bu_\rD - \bu_h\|^2_{0,e} \right)
\right\}^{1/2} \,.
\end{equation*}
\end{lem}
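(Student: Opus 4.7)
The plan is essentially routine since the statement is announced as a direct application of Cauchy--Schwarz. Starting from the definition \eqref{eq:residual-functional-v}, for any test function $\bv \in \bH^1(\Omega)$ I would first split $\cR_2(\bv)$ into its interior volume contribution and its boundary contribution, and bound each separately.

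For the volume term, I apply Cauchy--Schwarz in $\bbL^2(\Omega)$, obtaining the bound
\[
\Bigl| \kappa_1\int_\Omega \Bigl(\nabla \bu_h - \tfrac{1}{\nu}(\bsi_h + \bu_h\otimes\bu_h)^\rd\Bigr):\nabla\bv \Bigr|
\,\le\, \kappa_1\,\Bigl\|\nabla \bu_h - \tfrac{1}{\nu}(\bsi_h + \bu_h\otimes\bu_h)^\rd\Bigr\|_{0,\Omega}\,\|\nabla \bv\|_{0,\Omega},
\]
and then break the global $\bbL^2$ norm into the sum of local contributions over $T\in\cT_h$. For the boundary term, I apply Cauchy--Schwarz edge-by-edge over $e\in\cE_h(\Gamma)$ to get
\[
\Bigl|\kappa_2\int_\Gamma (\bu_\rD - \bu_h)\cdot\bv\Bigr|
\,\le\, \kappa_2\,\sum_{e\in\cE_h(\Gamma)} \|\bu_\rD - \bu_h\|_{0,e}\,\|\bv\|_{0,e},
\]
and then use a discrete Cauchy--Schwarz inequality together with the trace inequality $\|\bv\|_{0,\Gamma} \le C_\Gamma\,\|\bv\|_{1,\Omega}$ recalled just before \eqref{eq:continuity-F} to bound $\sum_e \|\bv\|_{0,e}^2$ by a constant times $\|\bv\|_{1,\Omega}^2$.

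Combining both estimates, taking the supremum over $\|\bv\|_{1,\Omega} = 1$, and using that $\|\nabla\bv\|_{0,\Omega}\le\|\bv\|_{1,\Omega}$ yields the desired bound with $C$ depending only on $\kappa_1$, $\kappa_2$, and $C_\Gamma$. The edge-indicator sum can be rewritten as $\sum_{T\in\cT_h}\sum_{e\in\cE_{h,T}(\Gamma)} \|\bu_\rD-\bu_h\|_{0,e}^2$ since each boundary edge belongs to exactly one element, which matches the form of the stated bound.

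There is no real obstacle here: no bubble functions, no Helmholtz decomposition, no inverse inequalities are needed; the only non-algebraic ingredient is the continuity of the trace operator already recorded earlier in the paper. The estimate is sharp in the sense that both terms appearing on the right-hand side coincide with the corresponding summands in the local indicator $\Theta_{1,T}^2$ defined in \eqref{eq:local-estimator-1}.
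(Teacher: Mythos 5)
Your proof is correct and follows exactly the route the paper intends: the paper itself states that the lemma "follows from a straightforward application of the Cauchy--Schwarz inequality," and your split into the volume term (global Cauchy--Schwarz, then localization over $T\in\cT_h$) and the boundary term (Cauchy--Schwarz plus the trace inequality $\|\bv\|_{0,\Gamma}\le C_\Gamma\|\bv\|_{1,\Omega}$) is precisely that argument. No issues.
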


We now bound the term $\|\cR_1\|_{\bbH_0(\bdiv;\Omega)'}$. 
To this end, we first observe that integrating by parts the expression $\int_\Omega \bu_h\cdot\bdiv(\btau)$ in \eqref{eq:residual-functional-tau}, the functional $\cR_1$ can be rewritten as follows
\begin{equation}\label{eq:first-ressidual-alternative}
\begin{array}{l}
\ds \cR_1(\btau) \,=\, \langle \btau\bn,\bu_\rD - \bu_h\rangle_\Gamma 
+ \int_\Omega \Big( \nabla\bu_h - \frac{1}{\nu}\big(\bsi_h + (\bu_h\otimes\bu_h)\big)^\rd \Big):\btau \\[2ex]
\ds\quad +\, \frac{1}{\alpha} \int_\Omega \big( \alpha\,\bu_h + \tF\,|\bu_h|^{\rp-2}\bu_h - \bdiv(\bsi_h) - \f \big)\cdot \bdiv(\btau) \,.
\end{array}
\end{equation}
For simplicity, we prove the aforementioned result for the 3D case.
The two dimensional one proceeds analogously.
Given $\btau \in \bbH_0(\bdiv;\Omega)$, it follows from part $b)$ (respectively $a)$ for the 2D setting) 
of Lemma \ref{lem:helmholtz-decomposition} that there exist $\bz\in \bH^2(\Omega)$ and 
$\bchi\in \bbH^1(\Omega)$ such that $\btau = \nabla\bz + \ubcurl(\bchi)$ in $\Omega$, and
\begin{equation}\label{eq:Helmholtz-decomposition}
\|\bz\|_{2,\Omega} + \|\bchi\|_{1,\Omega} \,\leq\, C_{\tt Hel}\,\|\btau\|_{\bdiv;\Omega} \,.
\end{equation}
Then, we set $\btau_h := \bPi_h^k(\nabla\bz) + \ubcurl(\bI_h(\bchi)) + \jmath_0\,\bbI$, 
where $\jmath_0\in \R$ is chosen so that $\int_\Omega \tr(\btau_h) = 0$. 
In addition, bearing in mind the definition of $\cR_1$ (cf. \eqref{eq:residual-functional-tau}),
and using the Galerkin scheme \eqref{eq:CBF-augmented-weak-formulation} and the compatibility 
condition \eqref{eq:compatibility-condition}, we deduce that 
$\cR_1(\btau_h) = 0$ and $\cR_1(\bbI)=0$, whence
\begin{equation}\label{eq:R1-Helmoltz-decomposition}
\cR_1(\btau) \,=\, \cR_1(\btau - \btau_h)
\,=\, \cR_1(\nabla\bz - \bPi_h^k(\nabla\bz)) + \cR_1(\ubcurl(\bchi-\bI_h(\bchi))) \,.
\end{equation}

The following lemma establishes the estimate for $\cR_1$.
\begin{lem}\label{lem:reliability-R1}
Assume that $\bu_\rD\in \bH^1(\Gamma)$. 
Then, there exists a positive constant $C$, independent of $h$, such that
\begin{equation*}
\|\cR_1\|_{\bbH_0(\bdiv;\Omega)'} 
\,\leq\, C\,\left\{ \sum_{T\in \cT_h} \wt{\Theta}_{1,T}^2 \right\}^{1/2} \,,
\end{equation*}
where
\begin{equation}\label{eq:local-estimator-1-tilde}
\begin{array}{l}
\ds \wt{\Theta}_{1,T}^2 \,:=\, \|\alpha\,\bu_h + \tF\,|\bu_h|^{\rp-2}\bu_h - \bdiv(\bsi_h) - \f\|_{0,T}^2 
+ h_T^2\,\Big\|\nabla \bu_h - \frac{1}{\nu}\big(\bsi_h + (\bu_h\otimes\bu_h)\big)^\rd \Big\|_{0,T}^2 \\[2ex]
\ds\quad +\, h_T^2\left\|\ubcurl\left(\frac{1}{\nu}\big(\bsi_h + (\bu_h \otimes \bu_h)\big)^\rd \right)\right\|_{0,T}^2
+ \sum_{e\in \cE_{h,T}(\Omega)} h_e\, \left\|\jump{\ubgamma\left(\frac{1}{\nu}\big(\bsi_h + (\bu_h \otimes \bu_h)\big)^\rd \right)}\right\|_{0,e}^2 \\[2ex] 
\ds\quad +\, \sum_{e\in\cE_{h,T}(\Gamma)} h_e\,\left\|\ubgamma_{*}\left(\nabla \bu_\rD - \frac{1}{\nu}\big(\bsi_h + (\bu_h\otimes\bu_h)\big)^\rd \right)\right\|_{0,e}^2
+ \sum_{e\in\cE_{h,T}(\Gamma)} h_e\,\|\bu_\rD-\bu_h\|_{0,e}^2 \,.
\end{array}
\end{equation}
\end{lem}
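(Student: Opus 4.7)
The plan is to bound the two contributions on the right-hand side of \eqref{eq:R1-Helmoltz-decomposition} separately by $C\,\{\sum_T \wt{\Theta}_{1,T}^2\}^{1/2}\,\|\btau\|_{\bdiv;\Omega}$, then apply \eqref{eq:Helmholtz-decomposition} and take the supremum over $\btau\in\bbH_0(\bdiv;\Omega)$. Throughout, I use the alternative form \eqref{eq:first-ressidual-alternative} of $\cR_1$ and the local approximation properties of the Raviart--Thomas operator $\bPi_h^k$ and the Cl\'ement interpolant $\bI_h$ collected in Appendix \ref{sec:appendix-B}.

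For the first piece, $\cR_1(\nabla\bz-\bPi_h^k(\nabla\bz))$, I would insert this test function into \eqref{eq:first-ressidual-alternative} and estimate term by term. The boundary integral $\langle(\nabla\bz-\bPi_h^k(\nabla\bz))\bn,\bu_\rD-\bu_h\rangle_\Gamma$ is split edge by edge and bounded via Cauchy--Schwarz together with the trace-type estimate $\|(\nabla\bz-\bPi_h^k(\nabla\bz))\bn\|_{0,e}\le C\,h_e^{1/2}\|\bz\|_{2,T_e}$, giving the $h_e\|\bu_\rD-\bu_h\|_{0,e}^2$ contribution. The middle term uses the $L^2$ approximation bound $\|\nabla\bz-\bPi_h^k(\nabla\bz)\|_{0,T}\le C\,h_T\|\bz\|_{2,T}$, producing the weight $h_T^2$ against the constitutive residual. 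For the last volume term, I invoke the commuting-diagram property $\bdiv\circ\bPi_h^k=P_h^k\circ\bdiv$ and $L^2$-stability of $P_h^k$, which yields $\|\bdiv(\nabla\bz-\bPi_h^k(\nabla\bz))\|_{0,T}\le\|\bdiv(\btau)\|_{0,T}$; this produces the momentum-equation residual with no $h_T$ weight.

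For the second piece, $\cR_1(\ubcurl(\bchi-\bI_h(\bchi)))$, the identity $\bdiv\circ\ubcurl=\mathbf{0}$ kills the momentum-residual term, leaving only the boundary integral on $\Gamma$ and the volume pairing with $\nabla\bu_h-\tfrac{1}{\nu}(\bsi_h+\bu_h\otimes\bu_h)^\rd$. I would integrate this volume pairing by parts element by element through a Green identity for $\ubcurl$. This produces volumetric $\ubcurl$ terms on each $T$, jumps of the tangential trace $\ubgamma_*(\nu^{-1}(\bsi_h+\bu_h\otimes\bu_h)^\rd)$ across interior edges, and a boundary contribution on $\Gamma$. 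The latter is combined with the original boundary integral from \eqref{eq:first-ressidual-alternative}; one further integration by parts along $\Gamma$ (which is exactly where the hypothesis $\bu_\rD\in\bH^1(\Gamma)$ is used) transfers the tangential derivative onto $\bu_\rD$, so that the residual factor becomes $\ubgamma_*(\nabla\bu_\rD-\tfrac{1}{\nu}(\bsi_h+\bu_h\otimes\bu_h)^\rd)$. Inserting the Cl\'ement estimates $\|\bchi-\bI_h(\bchi)\|_{0,T}\le C\,h_T\|\bchi\|_{1,\omega_T}$ and $\|\bchi-\bI_h(\bchi)\|_{0,e}\le C\,h_e^{1/2}\|\bchi\|_{1,\omega_e}$, and summing with a discrete Cauchy--Schwarz, delivers the third, fourth, and fifth contributions of $\wt{\Theta}_{1,T}^2$.

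Combining both bounds with \eqref{eq:Helmholtz-decomposition} gives $|\cR_1(\btau)|\le C\,\{\sum_T\wt{\Theta}_{1,T}^2\}^{1/2}\|\btau\|_{\bdiv;\Omega}$, and the result follows on dividing by $\|\btau\|_{\bdiv;\Omega}$ and taking the supremum. The delicate step will be the boundary analysis in the curl part: one must correctly identify the tangential derivatives of $\bu_\rD$ that have to be moved off $\bchi-\bI_h(\bchi)$ and onto the discrete residual so that the distinctive factor $\ubgamma_*(\nabla\bu_\rD-\nu^{-1}(\bsi_h+\bu_h\otimes\bu_h)^\rd)$ of the fifth term of $\wt{\Theta}_{1,T}^2$ appears with the right sign and scaling; this is precisely where the assumption $\bu_\rD\in\bH^1(\Gamma)$ is essential.
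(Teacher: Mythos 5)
Your proposal is correct and follows essentially the same route as the paper: the same split of $\cR_1$ via the Helmholtz decomposition and \eqref{eq:R1-Helmoltz-decomposition}, the same Raviart--Thomas and Cl\'ement approximation estimates for the two pieces (including the commuting-diagram argument that leaves the momentum residual unweighted), and the same integration by parts along $\Gamma$ — which is exactly where $\bu_\rD\in\bH^1(\Gamma)$ enters — to produce the factor $\ubgamma_{*}\big(\nabla\bu_\rD - \nu^{-1}(\bsi_h+(\bu_h\otimes\bu_h))^\rd\big)$. The only cosmetic difference is that for the curl piece the paper reverts to the original expression \eqref{eq:residual-functional-tau} of $\cR_1$, which avoids carrying the (ultimately vanishing) elementwise curls and tangential jumps of $\nabla\bu_h$ that appear in your version based on \eqref{eq:first-ressidual-alternative}.
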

\begin{proof}
We begin by considering the expression for $\cR_1$ given by \eqref{eq:first-ressidual-alternative}. 
Thus, proceeding analogously as in the proof of \cite[Lemma 4.4]{gms2010}, i.e., 
using the Cauchy--Schwarz inequality, and applying the approximation properties of 
$\bPi_h^k$ (cf. Lemma \ref{lem:Phih-properties}), we find that
\begin{equation}\label{eq:R1-reliability-1}
\begin{array}{l}
\ds \big|\cR_1(\nabla\bz - \bPi_h^k(\nabla\bz)) \big| \,\leq\, C_1\,\Bigg\{
\sum_{T\in\cT_h} \| \alpha\,\bu_h + \tF\,|\bu_h|^{\rp-2}\bu_h - \bdiv(\bsi_h) - \f \|_{0,T}^2 \\[3ex]
\ds\quad +\, \sum_{T\in\cT_h} h_T^2\,\Big\|\nabla \bu_h - \frac{1}{\nu}\big(\bsi_h + (\bu_h\otimes\bu_h)\big)^\rd \Big\|_{0,T}^2 
+ \sum_{e\in\cE_h(\Gamma)} h_e\,\|\bu_\rD - \bu_h\|_{0,e}^2
\Bigg\}^{1/2}\,\|\bz\|_{2,\Omega} \,,
\end{array}
\end{equation}
with $C_1>0$, independent of $h$. 
In turn, in order to bound $\cR_1(\ubcurl(\bchi - \bI_h(\bchi)))$, we appeal to the original definition
of $\cR_1$ in \eqref{eq:residual-functional-tau}, and proceed as in \cite[Lemma 4.3]{gms2010} 
by using the integration by parts formula on the boundary $\Gamma$ obtained from \cite[Chapter I, eq. (2.17) and Theorem 2.11]{Girault-Raviart} (see also \cite[Lemma 3.5, eq. (3.34) for 2D case]{dgm2015}):
\begin{equation*}
\langle \ubcurl(\bchi - \bI_h(\bchi))\bn , \bu_\rD \rangle_\Gamma 
\,=\, -\,\langle \nabla\bu_\rD\times\bn, \bchi - \bI_h(\bchi) \rangle_\Gamma 
\,=\, -\,\langle \ubgamma_{*}(\nabla\bu_\rD),\bchi - \bI_h(\bchi) \rangle_\Gamma \,,
\end{equation*}
so that applying local integration by parts, the Cauchy--Schwarz inequality, and 
the approximation properties of $\bI_h$ (cf. Lemma \ref{lem:clement}), we obtain	
\begin{equation}\label{eq:R1-reliability-2}
\begin{array}{l}
\ds \big| \cR_1(\ubcurl(\bchi - \bI_h(\bchi))) \big| \,\leq\, C_2\,\Bigg\{
\sum_{T\in\cT_h} h_T^2\,\left\|\ubcurl\left(\frac{1}{\nu}\big(\bsi_h + (\bu_h \otimes \bu_h)\big)^\rd \right)\right\|_{0,T}^2 \\[3ex]
\ds\quad +\, \sum_{e\in \cE_h(\Omega)} h_e\,\left\|\jump{\ubgamma_{*}\left(\frac{1}{\nu}\big(\bsi_h + (\bu_h\otimes\bu_h)\big)^\rd \right)}\right\|_{0,e}^2 \\[3ex]
\ds\quad +\, \sum_{e\in\cE_h(\Gamma)} h_e\,\left\|\ubgamma_{*}\left(\nabla \bu_\rD - \frac{1}{\nu}\big(\bsi_h + (\bu_h\otimes\bu_h)\big)^\rd \right)\right\|_{0,e}^2 \Bigg\}^{1/2}\,\|\bchi\|_{1,\Omega}\,,
\end{array}
\end{equation}
where the term involving the set $\cE_h(\Gamma)$ remain valid if $\bu_\rD\in \bH^1(\Gamma)$.
The conclusion follows directly from \eqref{eq:R1-Helmoltz-decomposition}, 
\eqref{eq:R1-reliability-1}, \eqref{eq:R1-reliability-2}, and the stability of 
the Helmholtz decomposition (cf. \eqref{eq:Helmholtz-decomposition}).
\end{proof}

We end this section by observing that the estimate \eqref{eq:reliability-Theta-1} is a
straightforward consequence of Lemmas \ref{lem:preliminary-reliability} 
and \ref{lem:reliability-R2}--\ref{lem:reliability-R1}, the definition of the global estimator 
$\Theta_1$ (cf. \eqref{eq:global-estimator-1}), and the fact that the terms 
$h_T^2\,\big\|\nabla \bu_h - \frac{1}{\nu}\big(\bsi_h + (\bu_h\otimes\bu_h)\big)^\rd \big\|_{0,T}^2$ 
and $h_e\,\|\bu_\rD - \bu_h\|_{0,e}^2$, which form part of $\wt{\Theta}_{1,T}$ 
(cf. \eqref{eq:local-estimator-1-tilde}), are dominated by 
$\big\|\nabla \bu_h - \frac{1}{\nu}\big(\bsi_h + (\bu_h\otimes\bu_h)\big)^\rd \big\|_{0,T}^2$ 
and $\|\bu_\rD - \bu_h\|_{0,e}^2$, respectively.

\subsection{Efficiency}\label{sec:efficiency-Theta-1}

We now aim to establish the efficiency estimate of $\Theta_1$ (cf. \eqref{eq:global-estimator-1}).
For this purpose, we will make extensive use of the original system of equations given by 
\eqref{eq:convective-Brinkman-Forchheimer-2}, which is recovered from the mixed continuous
formulation \eqref{eq:CBF-augmented-weak-formulation} by choosing suitable test functions 
and integrating by parts backwardly the corresponding equations.
The following theorem is the main result of this section.
\begin{thm}\label{th:efficience-estimator1}
Supose that the data $\f$ and $\bu_\rD$ satisfy \eqref{eq:data-assumption-Cea}.
Then, there exists a positive constant $C_{\tt eff}$, independent of $h$, such that 
\begin{equation}\label{eq:efficiency-estimator-1}
C_{\tt eff}\,\Theta_1 + {\tt h.o.t.} \,\leq\, \|(\bsi,\bu) - (\bsi_h,\bu_h)\| \,,
\end{equation}
where ${\tt h.o.t.}$ stands for one or several terms of higher order.
\end{thm}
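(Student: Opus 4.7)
The plan is to bound each of the six local contributions in $\Theta_{1,T}^2$ either directly by the error $\|\bsi - \bsi_h\|_{\bdiv;\Omega}^2 + \|\bu - \bu_h\|_{1,\Omega}^2$ restricted to a small patch around $T$, or by the error plus a higher order data-oscillation term. Throughout, set $\bPhi := \tfrac{1}{\nu}\bigl(\bsi + (\bu\otimes\bu)\bigr)^{\rd}$ and $\bPhi_h := \tfrac{1}{\nu}\bigl(\bsi_h + (\bu_h\otimes\bu_h)\bigr)^{\rd}$; by the continuous constitutive identity \eqref{eq:convective-Brinkman-Forchheimer-2a}, $\bPhi = \nabla\bu$ in $\Omega$, and in particular $\ubcurl(\bPhi) = \0$.

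The three ``directly-bounded'' terms are the first two $\bbL^2$-volume residuals and the $\bu_\rD$-trace residual (terms 1, 2 and 6 of \eqref{eq:local-estimator-1}). Using \eqref{eq:convective-Brinkman-Forchheimer-2a}--\eqref{eq:convective-Brinkman-Forchheimer-2c} to replace the vanishing continuous residuals, they become
$$\nabla\bu_h - \bPhi_h \,=\, \nabla(\bu_h-\bu) + \tfrac{1}{\nu}(\bsi - \bsi_h)^{\rd} + \tfrac{1}{\nu}\bigl((\bu\otimes\bu) - (\bu_h\otimes\bu_h)\bigr)^{\rd},$$
$$\alpha\bu_h + \tF|\bu_h|^{\rp-2}\bu_h - \bdiv(\bsi_h) - \f \,=\, \alpha(\bu_h - \bu) + \tF\bigl(|\bu_h|^{\rp-2}\bu_h - |\bu|^{\rp-2}\bu\bigr) - \bdiv(\bsi_h - \bsi),$$
and $\bu_\rD - \bu_h = \bu - \bu_h$ on $\Gamma$. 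Writing $\bu\otimes\bu - \bu_h\otimes\bu_h = (\bu-\bu_h)\otimes\bu + \bu_h\otimes(\bu-\bu_h)$, then using H\"older and the continuous injections $\bH^1\hookrightarrow\bL^4$ and $\bH^1\hookrightarrow\bL^\rq$ (as in \eqref{eq:continuous-injection-H1-L4}--\eqref{eq:continuous-injection-H1-Lq}) together with the uniform a priori bounds \eqref{eq:dependence-continuous-CBF}--\eqref{eq:discrete-dependence-continuous-CBF} on $\bu$ and $\bu_h$, handles the quadratic convective term. For the Forchheimer difference, estimate \eqref{eq:glowinsky} with $p = \rp$ and the H\"older split $\tfrac{1}{2} = \tfrac{\rp-2}{\rq} + \tfrac{1}{\rq}$ (which is precisely why $\rq = 2(\rp-1)$ is the right exponent) suffice; the trace residual follows from the usual trace inequality.

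The remaining terms (curl, interior jumps, boundary tangential; terms 3, 4 and 5) will be treated by the Verf\"urth element- and edge-bubble technique used, e.g., in \cite{gms2010,gos2018,cgo2019-CMA}. For term 3, set $\bpsi_h := \ubcurl(\bPhi_h)|_T$, multiply by the element bubble $b_T\bpsi_h$, integrate by parts (no boundary contribution since $b_T|_{\partial T}=0$), exploit $\ubcurl(\nabla\bu)=0$, and close with the inverse estimate $\|\ubcurl(b_T\bpsi_h)\|_{0,T}\lesssim h_T^{-1}\|\bpsi_h\|_{0,T}$; this yields $h_T\|\bpsi_h\|_{0,T}\lesssim \|\bPhi_h - \nabla\bu\|_{0,T}$, whose right-hand side is already bounded by the first volume indicator. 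An edge-bubble version of the same argument, carried out on the patch $\omega_e = T^+\cup T^-$, gives the interior-jump estimate. For term 5, the identity $\bu = \bu_\rD$ on $\Gamma$, together with the hypothesis $\bu_\rD\in\bH^1(\Gamma)$, yields $\ubgamma_*(\nabla\bu_\rD) = \ubgamma_*(\nabla\bu)$ on $\Gamma$, and an edge-bubble argument on the boundary patch closes the estimate.

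The main technical obstacle will be the Forchheimer nonlinearity: the Lipschitz bound \eqref{eq:glowinsky} produces a factor $(|\bu|+|\bu_h|)^{\rp-2}$ whose $\bL^{\rq/(\rp-2)}$-norm is only controlled through Sobolev embedding and the uniform a priori bounds on $\bu, \bu_h$, and it is precisely the smallness-on-data assumption \eqref{eq:data-assumption-Cea} that keeps the resulting multiplicative constants bounded and independent of $h$, enabling the final summation over $T\in\cT_h$ that produces \eqref{eq:efficiency-estimator-1}. The higher order terms ${\tt h.o.t.}$ will collect the usual data oscillations of the form $h_T\|\f - \f_h\|_{0,T}$ and $h_e^{1/2}\|\bu_\rD - \bu_{\rD,h}\|_{0,e}$ arising when $\f$ and $\bu_\rD$ are replaced by piecewise polynomial approximations inside the bubble-function arguments.
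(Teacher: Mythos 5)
Your proposal follows essentially the same route as the paper's proof: the first two volume residuals and the boundary $\L^2$-residual are bounded directly from the continuous identities \eqref{eq:convective-Brinkman-Forchheimer-2a}--\eqref{eq:convective-Brinkman-Forchheimer-2c} and the trace inequality, the curl, interior-jump and boundary-tangential terms are handled by the element/edge-bubble technique of \cite{gms2010}, and the nonlinear differences $\bu\otimes\bu-\bu_h\otimes\bu_h$ and $|\bu|^{\rp-2}\bu-|\bu_h|^{\rp-2}\bu_h$ are controlled via \eqref{eq:glowinsky}, H\"older, Sobolev embeddings and the uniform bounds $\bu\in\bW_r$, $\bu_h\in\wt{\bW}_r$ (your H\"older split through $\bL^{\rq}$ differs cosmetically from the paper's $\bL^{3(\rp-2)}$--$\bL^6$ split, but both exponents lie in the admissible embedding range). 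The argument is correct and matches the paper's proof in all essentials.
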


Throughout this section we assume, without loss of generality, that $\f$ and $\bu_\rD$,
are all piecewise polynomials. Otherwise, if $\f$ and $\bu_\rD$ are sufficiently smooth,
one proceeds similarly to \cite[Section 6.2]{cmo2016}, so that higher order terms
given by the errors arising from suitable polynomial approximation of these functions
appear in \eqref{eq:efficiency-estimator-1}, which explains the eventual ${\tt h.o.t.}$ in 
this inequality.

We begin the derivation of the efficiency estimates with the following result.
\begin{lem}\label{lem:eff_bound_func-1}
There exist $C_1>0$ and $C_2>0$, independent of $h$, such that 
for each $T\in \cT_h$ there hold
\begin{equation}\label{eq:eff-bound-func1}
\begin{array}{l}
\ds \Big\|\nabla\bu_h - \frac{1}{\nu}\big(\bsi_h + (\bu_h\otimes\bu_h)\big)^\rd\Big\|_{0,T} \\[2ex]
\ds\quad \leq\, C_1\,\Big\{ \|\bu - \bu_h\|_{1,T} 
+ \|\bsi - \bsi_h\|_{0,T}
+ \|\bu\otimes\bu - \bu_h\otimes\bu_h\|_{0,T} \Big\} 
\end{array}
\end{equation}
and
\begin{equation}\label{eq:eff-bound-func2}
\begin{array}{l}
\ds \|\alpha\,\bu_h + \tF\,|\bu_h|^{\rp-2}\bu_h - \bdiv(\bsi_h) - \f\|_{0,T} \\[2ex]
\ds\quad \leq\, C_2\,\Big\{ \|\bsi - \bsi_h\|_{\bdiv;T} 
+ \|\bu-\bu_h\|_{0,T} 
+ \big\| |\bu|^{\rp-2}\bu - |\bu_h|^{\rp-2}\bu_h \big\|_{0,T} \Big\} \,.
\end{array}
\end{equation}
\end{lem}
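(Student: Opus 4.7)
The plan is to exploit the strong form of the convective Brinkman--Forchheimer system \eqref{eq:convective-Brinkman-Forchheimer-2a}--\eqref{eq:convective-Brinkman-Forchheimer-2b}, which the exact solution $(\bsi,\bu)$ satisfies pointwise almost everywhere in $\Omega$. The two estimates are essentially residual identities obtained by inserting $(\bsi,\bu)$ into the quantity being bounded, followed by the triangle inequality; no bubble functions or inverse estimates are needed for these two particular contributions to $\Theta_{1,T}^2$.

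For \eqref{eq:eff-bound-func1}, I would start from \eqref{eq:convective-Brinkman-Forchheimer-2a}, which (since $\bsi^\rd+(\bu\otimes\bu)^\rd=(\bsi+\bu\otimes\bu)^\rd$) gives the identity $\nabla\bu - \tfrac{1}{\nu}(\bsi+\bu\otimes\bu)^\rd = \0$ in $\Omega$. Subtracting this vanishing expression from the discrete residual yields, on any $T\in\cT_h$,
\begin{equation*}
\nabla\bu_h - \tfrac{1}{\nu}(\bsi_h+\bu_h\otimes\bu_h)^\rd
\,=\, \nabla(\bu_h-\bu) + \tfrac{1}{\nu}(\bsi-\bsi_h)^\rd + \tfrac{1}{\nu}(\bu\otimes\bu - \bu_h\otimes\bu_h)^\rd.
\end{equation*}
Taking the $\bL^2(T)$ norm, using the fact that the deviatoric part is $\bL^2$-bounded by the full tensor, and gathering constants into a single $C_1$ depending only on $\nu$, delivers \eqref{eq:eff-bound-func1}.

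For \eqref{eq:eff-bound-func2}, the plan is analogous: the momentum equation \eqref{eq:convective-Brinkman-Forchheimer-2b} yields $\alpha\bu + \tF|\bu|^{\rp-2}\bu - \bdiv(\bsi) - \f = \0$ in $\Omega$, so subtracting from the discrete residual on $T$ gives
\begin{equation*}
\alpha\bu_h + \tF|\bu_h|^{\rp-2}\bu_h - \bdiv(\bsi_h) - \f
\,=\, \alpha(\bu_h - \bu) + \tF\bigl(|\bu_h|^{\rp-2}\bu_h - |\bu|^{\rp-2}\bu\bigr) - \bdiv(\bsi_h - \bsi).
\end{equation*}
Applying the $\bL^2(T)$ triangle inequality and absorbing the constants $\alpha,\tF$ and the norm-domination $\|\bdiv(\bsi-\bsi_h)\|_{0,T}\le \|\bsi-\bsi_h\|_{\bdiv;T}$ into one constant $C_2$ yields \eqref{eq:eff-bound-func2}.

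Neither step presents a genuine obstacle; both are local pointwise identities combined with the triangle inequality, and no localization machinery (bubble functions, inverse inequalities, or trace estimates) is required here. The only modest subtlety is that the nonlinear contributions $\bu\otimes\bu - \bu_h\otimes\bu_h$ and $|\bu|^{\rp-2}\bu - |\bu_h|^{\rp-2}\bu_h$ are kept in unfactored form at this stage of the argument; their further control in terms of $\|\bu-\bu_h\|_{1,\Omega}$ (e.g.\ via \eqref{eq:glowinsky} and the injections $\bi_4,\bi_\rq$ from \eqref{eq:continuous-injection-H1-L4}--\eqref{eq:continuous-injection-H1-Lq}) will be postponed to the subsequent lemmas where the remaining curl/edge/boundary terms of $\Theta_{1,T}$ are bounded and the proof of Theorem \ref{th:efficience-estimator1} is assembled.
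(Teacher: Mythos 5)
Your proposal is correct and follows exactly the route the paper takes: its proof simply recalls the strong-form identities $\nabla\bu = \frac{1}{\nu}\big(\bsi+(\bu\otimes\bu)\big)^\rd$ and $\f = \alpha\,\bu + \tF\,|\bu|^{\rp-2}\bu - \bdiv(\bsi)$ from \eqref{eq:convective-Brinkman-Forchheimer-2a}--\eqref{eq:convective-Brinkman-Forchheimer-2b} and omits the remaining details, which are precisely the subtraction and triangle-inequality steps you spell out. Your added observation that the nonlinear differences are deliberately left unfactored at this stage is also consistent with how the paper handles them later via \eqref{eq:Forch_bound-3} and \eqref{eq:eff-convective-term}.
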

\begin{proof}
It suffices to recall that $\nabla \bu = \dfrac{1}{\nu}\big(\bsi + (\bu\otimes\bu)\big)^\rd$ in $\Omega$
and $\f = \alpha\,\bu + \tF\,|\bu|^{\rp-2}\bu - \bdiv(\bsi)$ in $\Omega$ (cf. \eqref{eq:convective-Brinkman-Forchheimer-2a}--\eqref{eq:convective-Brinkman-Forchheimer-2b}).
We omit further details.
\end{proof}

Next, we provide the upper bound for the residual terms involving the Dirichlet datum $\bu_\rD$. 
\begin{lem}\label{lem:eff_bound_func-2}
There exists a positive constant $C_3$, independent of $h$, such that
\begin{equation*}
\sum_{e\in \mathcal E_h(	\Gamma)} \|\bu_{\rD} - \bu_{h}\|^2_{0,e} 
\,\leq\, C_3\,\|\bu - \bu_h\|^2_{1,\Omega} \,.
\end{equation*}
\end{lem}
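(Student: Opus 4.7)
The plan is to exploit the Dirichlet boundary condition satisfied by the exact velocity, namely $\bu = \bu_\rD$ on $\Gamma$ (cf. \eqref{eq:convective-Brinkman-Forchheimer-2c}), which turns the residual $\bu_\rD - \bu_h$ on each boundary edge $e$ into exactly the trace of the error $\bu - \bu_h$ restricted to $e$. This identification reduces the claim to a standard trace estimate, which is the only real ingredient needed.

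More concretely, I would first assemble the local $L^2(e)$ contributions into the global boundary norm: since the edges $e\in\mathcal E_h(\Gamma)$ form a disjoint partition of $\Gamma$ (up to measure zero), one has
\begin{equation*}
\sum_{e\in\mathcal E_h(\Gamma)} \|\bu_\rD - \bu_h\|_{0,e}^2 \,=\, \|\bu_\rD - \bu_h\|_{0,\Gamma}^2.
\end{equation*}
Then, using that $\bu|_\Gamma = \bu_\rD$ in the sense of traces, I would rewrite this as $\|\bu - \bu_h\|_{0,\Gamma}^2$.

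Finally, I would invoke the trace inequality already cited in the paper just before \eqref{eq:continuity-F}, namely $\|\bv\|_{0,\Gamma} \le C_\Gamma\,\|\bv\|_{1,\Omega}$ for all $\bv\in\bH^1(\Omega)$, applied to $\bv = \bu - \bu_h \in \bH^1(\Omega)$, to conclude
\begin{equation*}
\sum_{e\in\mathcal E_h(\Gamma)} \|\bu_\rD - \bu_h\|_{0,e}^2 \,=\, \|\bu - \bu_h\|_{0,\Gamma}^2 \,\leq\, C_\Gamma^2\,\|\bu - \bu_h\|_{1,\Omega}^2,
\end{equation*}
so that the desired bound holds with $C_3 := C_\Gamma^2$. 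There is essentially no obstacle here: the only subtlety is verifying that $\bu_h$ is admissible as an argument of the trace operator, which is immediate since $\bH_h^\bu \subset \bC(\ov\Omega)\subset \bH^1(\Omega)$ by \eqref{eq:subspaces-b}, and that $\bu-\bu_h$ genuinely coincides with $\bu_\rD - \bu_h$ edge-by-edge, which follows from the boundary condition and the compatibility of the Lebesgue/trace norms on polygonal boundaries.
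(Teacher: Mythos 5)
Your proposal is correct and follows exactly the paper's argument: identify $\bu_\rD - \bu_h$ with $\bu - \bu_h$ on $\Gamma$ via the Dirichlet condition, assemble the edgewise $L^2$ norms into $\|\bu-\bu_h\|_{0,\Gamma}^2$, and apply the trace inequality. Nothing further is needed.
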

\begin{proof}
It suffices to observe that 
\begin{equation*}
\sum_{e\in \mathcal E_h(\Gamma)} \|\bu_{\rD} - \bu_{h}\|^2_{0,e}  \,=\,
\sum_{e\in \mathcal E_h(\Gamma)} \|\bu - \bu_{h}\|^2_{0,e} \,=\,
\|\bu - \bu_h\|^2_{0,\Gamma} \,,
\end{equation*}
and then apply the trace inequality.
\end{proof}

The corresponding bounds for the remaining terms defining $\Theta_{1,T}$ 
(cf. \eqref{eq:local-estimator-1}) are stated in the following lemma.
\begin{lem}\label{lem:eff_bound_func-3}
There exist $C_4>0$ and $C_5>0$, independent of $h$, such that
\begin{equation}\label{eq:curl-bound-eff}
h_T\,\left\|\ubcurl\left(\frac{1}{\nu}\big(\bsi_h + (\bu_h\otimes\bu_h)\big)^\rd\right)\right\|_{0,T}
\,\leq\, C_4\,\Big\{ \|\bsi - \bsi_h\|_{0,T} + \|\bu\otimes\bu - \bu_h\otimes\bu_h\|_{0,T} \Big\}
\end{equation}
for all $T\in \cT_h$ and
\begin{equation}\label{eq:jump-bound-eff}
h_e^{1/2}\,\left\|\jump{\ubgamma_{*}\left(\frac{1}{\nu}\big(\bsi_h + (\bu_h\otimes\bu_h)\big)^\rd\right)}\right\|_{0,e}
\,\leq\, C_5\,\Big\{ \|\bsi - \bsi_h\|_{0,\omega_e} + \|\bu\otimes\bu - \bu_h\otimes\bu_h\|_{0,\omega_e} \Big\}
\end{equation}
for all $e\in \cE_h(\Omega)$, where $\omega_e$ denotes the union of the two elements of $\cT_h$ sharing the edge/face $e$.
Additionally, if $\bu_\rD$ is piecewise polynomial, there exists $C_6>0$, 
independent of $h$, such that
\begin{equation}\label{eq:gamma-bound-eff}
h_e^{1/2}\,\left\|\ubgamma_{*}\left(\nabla \bu_\rD - \frac{1}{\nu}\big(\bsi_h + (\bu_h\otimes\bu_h)\big)^\rd\right) \right\|_{0,e}
\,\leq\, C_6\,\Big\{ \|\bsi - \bsi_h\|_{0,T_e} + \|\bu\otimes\bu - \bu_h\otimes\bu_h\|_{0,T_e} \Big\}
\end{equation}
for all $e\in \cE_h(\Gamma)$, where $T_e$ is the element to which the boundary 
edge or boundary face $e$ belongs.
\end{lem}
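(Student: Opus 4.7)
\medskip\noindent
\textbf{Proof plan.} All three estimates will follow from the classical Verf\"urth bubble-function technique applied to the tensors
\[
\bG \,:=\, \frac{1}{\nu}\big(\bsi + (\bu\otimes\bu)\big)^\rd \qan
\bG_h \,:=\, \frac{1}{\nu}\big(\bsi_h + (\bu_h\otimes\bu_h)\big)^\rd .
\]
The structural ingredient is the continuous constitutive equation \eqref{eq:convective-Brinkman-Forchheimer-2a}, which reads $\nabla\bu = \bG$ in $\Omega$. This yields three facts: (i) $\ubcurl(\bG) = \0$ in each $T\in \cT_h$; (ii) $\jump{\ubgamma_{*}(\bG)} = \0$ across each interior $e\in \cE_h(\Omega)$, by continuity of the tangential trace of the gradient of an $\bH^1$-field; and (iii) $\ubgamma_{*}(\nabla\bu_\rD) = \ubgamma_{*}(\bG)$ on each boundary $e\in \cE_h(\Gamma)$, since $\bu|_\Gamma = \bu_\rD$ forces the tangential trace of $\nabla\bu$ to coincide with the surface gradient of $\bu_\rD$. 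Consequently, in \eqref{eq:curl-bound-eff}--\eqref{eq:gamma-bound-eff} the discrete quantities can be replaced, up to sign, by $\ubcurl(\bG_h - \bG)$, $\jump{\ubgamma_{*}(\bG_h - \bG)}$, and $\ubgamma_{*}(\bG_h - \bG)$, respectively, and the norm $\|\bG - \bG_h\|_{0,\omega}$ on any subset $\omega$ is directly controlled by $\|\bsi - \bsi_h\|_{0,\omega} + \|\bu\otimes\bu - \bu_h\otimes\bu_h\|_{0,\omega}$.

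For \eqref{eq:curl-bound-eff}, I would set $\bchi_T := \ubcurl(\bG_h)|_T$, which is polynomial on $T$, and test against $\psi_T\,\bchi_T$ with $\psi_T$ the element-bubble on $T$. The bubble-function norm equivalence recorded in Appendix \ref{sec:appendix-B} gives $\|\bchi_T\|_{0,T}^2 \lesssim \int_T \psi_T\,\bchi_T:\bchi_T$; integrating by parts (with vanishing boundary contribution thanks to $\psi_T|_{\partial T} = 0$) and using fact (i) rewrites this as $\int_T \ubcurl(\psi_T\bchi_T):(\bG_h - \bG)$. The inverse estimate $\|\ubcurl(\psi_T\bchi_T)\|_{0,T} \lesssim h_T^{-1}\|\bchi_T\|_{0,T}$ then yields $\|\bchi_T\|_{0,T} \lesssim h_T^{-1}\|\bG - \bG_h\|_{0,T}$, and multiplying through by $h_T$ delivers \eqref{eq:curl-bound-eff}.

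For \eqref{eq:jump-bound-eff} I would set $\bpsi_e := \jump{\ubgamma_{*}(\bG_h)}$ on $e\in \cE_h(\Omega)$ and extend it polynomially to $\omega_e$ via a fixed extension $E(\bpsi_e)$. Testing against $\psi_e\,E(\bpsi_e)$, where $\psi_e$ is the edge-bubble supported on $\omega_e$, the edge-bubble norm equivalence gives $\|\bpsi_e\|_{0,e}^2 \lesssim \int_e \psi_e\,\bpsi_e:\bpsi_e$. Element-wise integration by parts on the two triangles of $\omega_e$, combined with fact (ii) to annihilate the continuous counterpart, produces volume contributions estimated by $\|\ubcurl(\bG_h - \bG)\|_{0,\omega_e}$ and $\|\bG - \bG_h\|_{0,\omega_e}$ with the appropriate $\psi_e$-scalings; absorbing the curl contribution via the previous step and multiplying through by $h_e^{1/2}$ yields \eqref{eq:jump-bound-eff}. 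For \eqref{eq:gamma-bound-eff}, identity (iii) replaces $\nabla\bu_\rD$ by $\bG$ in the trace sense on $e$, and the same edge-bubble argument is repeated on the single element $T_e$; the piecewise-polynomial hypothesis on $\bu_\rD$ ensures that the extension $E(\cdot)$ can be chosen polynomial-valued without incurring any polynomial-approximation error, the general smooth case contributing the $\mathtt{h.o.t.}$ of Theorem \ref{th:efficience-estimator1}.

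The main technical obstacle is bookkeeping the precise $h_T$- and $h_e$-scalings arising from the tensor-valued integrations by parts involving $\ubcurl$ and $\ubgamma_{*}$, and verifying that the interface contributions produced by element-wise integration by parts on $\omega_e$ cancel exactly against the zero-jump identity (ii) and against the curl bound from the first step. All remaining manipulations reduce to routine applications of Cauchy--Schwarz together with the bubble-function inverse inequalities collected in Appendix \ref{sec:appendix-B}, closely paralleling the corresponding arguments in \cite{gms2010,cgoz2022}.
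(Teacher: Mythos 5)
Your proposal is correct and follows essentially the same route as the paper: the paper likewise observes that $\frac{1}{\nu}\big(\bsi+(\bu\otimes\bu)\big)^\rd=\nabla\bu$ is curl-free (hence jump-free tangentially, and matching $\nabla\bu_\rD$ on $\Gamma$) and then invokes \cite[Lemmas 4.11 and 4.15]{gms2010}, whose proofs are precisely the element-bubble and edge-bubble arguments you spell out. The only slip is bibliographic: the bubble-function norm equivalences and inverse inequalities are not actually recorded in Appendix \ref{sec:appendix-B} of this paper (which only contains the Cl\'ement, Raviart--Thomas, and Helmholtz results), so they must be taken from the cited references.
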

\begin{proof} 
First, noting that $\ubcurl\left(\frac{1}{\nu}\big(\bsi+(\bu\otimes\bu)\big)^\rd\right) = \ubcurl\left(\nabla \bu\right) = \0$ in $\Omega$, we find that 
\eqref{eq:curl-bound-eff}--\eqref{eq:jump-bound-eff} follows from a slight adaptation of 
\cite[Lemma 4.11]{gms2010}, whereas for the proof of \eqref{eq:gamma-bound-eff} we refer
the reader to \cite[Lemma 4.15]{gms2010}.
\end{proof}

In order to complete the global efficiency given by \eqref{eq:efficiency-estimator-1} 
(cf. Theorem \ref{th:efficience-estimator1}), we now need to estimate the terms 
$\big\| |\bu|^{\rp-2}\bu - |\bu_h|^{\rp-2}\bu_h \big\|_{0,T}^2$ 
and $\|\bu\otimes\bu - \bu_h\otimes\bu_h\|^2_{0,T}$ appearing in the upper bounds provided by 
Lemmas \ref{lem:eff_bound_func-1} and \ref{lem:eff_bound_func-3}.
To this end, we first make use of \eqref{eq:glowinsky}, the H\"older inequality with $p=3/2$ and $q=3$
satisfying $1/p + 1/q=1$,
and simple algebraic manipulations, to obtain
\begin{equation*}\label{eq:Forch_bound-1}
\big\| |\bu|^{\rp-2}\bu - |\bu_h|^{\rp-2}\bu_h \big\|_{0,T}^2 
\,\leq\, \wh{c}_\rp\,\Big( \|\bu\|_{0,3(\rp-2);T}^{2(\rp-2)} + \|\bu_h\|_{0,3(\rp-2);T}^{2(\rp-2)}\Big) \|\bu - \bu_h\|_{0,6;T}^2 \,,
\end{equation*}
with $\wh{c}_\rp := 2^{2\,\rp - 5}\,c^2_\rp$.
Then, applying H\"older inequality and some algebraic computations, we find that
\begin{equation}\label{eq:Forch_bound-2}
\begin{array}{l}
\ds \sum_{T\in \cT_h} \big\| |\bu|^{\rp-2}\bu - |\bu_h|^{\rp-2}\bu_h \big\|_{0,T}^2 \\[2ex]
\ds\quad \,\leq\, \wh{c}_\rp\,\left\{ \sum_{T\in \cT_h} \Big( \|\bu\|_{0,3(\rp-2);T}^{2(\rp-2)} 
+ \|\bu_h\|_{0,3(\rp-2);T}^{2(\rp-2)}\Big)^{3/2} \right\}^{2/3} \left\{ \sum_{T\in \cT_h} \|\bu - \bu_h\|_{0,6;T}^6 \right\}^{1/3} \\[4ex]
\ds\quad 
\,\leq\, \sqrt[3]{2}\,\wh{c}_\rp\, \Big( \|\bu\|_{0,3(\rp-2);\Omega}^{2(\rp-2)} + \|\bu_h\|_{0,3(\rp-2);\Omega}^{2(\rp-2)}\Big) \|\bu - \bu_h\|_{0,6;\Omega}^2 \,.
\end{array}
\end{equation}
In this way, using the continuous injections $\bi_6: \bH^1(\Omega)\to \bL^6(\Omega)$
and $\bi_{3(\rp-2)}:\bH^1(\Omega)\to \bL^{3(\rp-2)}(\Omega)$, with $3(\rp-2)\in [3,6]$, and 
the fact that $\bu\in \bW_r$ and $\bu_h\in \wt{\bW}_r$,
we deduce from \eqref{eq:Forch_bound-2} that there exists a constant $C>0$, 
depending only on $r$ and other constants, and hence independent of $h$, such that
\begin{equation}\label{eq:Forch_bound-3}
\sum_{T\in\cT_h} \big\| |\bu|^{\rp-2}\bu - |\bu_h|^{\rp-2}\bu_h \big\|_{0,T}^2 
\,\leq\, C\,\|\bu-\bu_h\|_{1,\Omega}^2 \,.
\end{equation}

Similarly, adding and subtracting $\bu\otimes\bu_h$ (it also works with $\bu_h\otimes\bu$),
and applying H\"older's inequality, we deduce that
\begin{equation*}
\|\bu\otimes\bu - \bu_h\otimes\bu_h\|_{0,T}^2
\,\leq\, 2\,\big( \|\bu\|_{0,4;T}^2 + \|\bu_h\|_{0,4;T}^2 \big)\|\bu - \bu_h\|_{0,4;T}^2 \,,
\end{equation*}
so that proceeding analogously to \eqref{eq:Forch_bound-2},
and then using the continuous injection $\bi_4: \bH^1(\Omega)\to \bL^4(\Omega)$, and 
the fact that $\bu\in \bW_r$ and $\bu_h\in \wt{\bW}_r$, 
we are able to show that there exists a positive constant $\wt{C}$, independent of $h$, such that
\begin{equation}\label{eq:eff-convective-term}
\sum_{T\in\cT_h} \|\bu\otimes\bu - \bu_h\otimes\bu_h\|_{0,T}^2 
\,\leq\, \wt{C}\,\|\bu - \bu_h\|_{1,\Omega}^2 \,.
\end{equation}
Consequently, it is not difficult to see that \eqref{eq:efficiency-estimator-1} follows 
from the definition of $\Theta_1$ (cf. \eqref{eq:global-estimator-1}--\eqref{eq:local-estimator-1}),
Lemmas \ref{lem:eff_bound_func-1}, \ref{lem:eff_bound_func-2} and \ref{lem:eff_bound_func-3}, and 
the estimates \eqref{eq:Forch_bound-3} and \eqref{eq:eff-convective-term}.

%
%
\section{Numerical results}\label{sec:numerical-results}

This section serves to illustrate the performance and accuracy of the proposed augmented mixed 
finite element scheme \eqref{eq:CBF-augmented-discrete-formulation-H0} along with 
the reliability and efficiency properties of the {\it a posteriori} error estimators 
$\Theta_1$ (cf. \eqref{eq:global-estimator-1}) and $\wh{\Theta}_2$ 
(cf. \eqref{eq:global-estimator-2-hat}), in $2$D and $3$D domains.
Regarding $\wh{\Theta}_2$ it was established in Appendix \ref{sec:appendix-C} 
that it is reliable, but efficient only up to all its terms, except the last one in 
\eqref{eq:local-estimator-2-hat}.
Indeed, while the numerical results to be displayed below suggest that $\wh{\Theta}_2$
could actually verify both properties, the eventual efficiency is just a conjecture by now.
In what follows, we refer to the corresponding sets of finite element
subspaces generated by $k = 0$ and $k = 1$,  as simply 
$\bbRT_{0}-\bP_{1}$ and $\bbRT_{1}-\bP_{2}$, respectively. 
The implementation is based on a {\tt FreeFem++} code \cite{Hecht2012}. 
Regarding the implementation of the Newton iterative method associated to 
\eqref{eq:CBF-augmented-discrete-formulation-H0}, 
the iterations are terminated once the relative error of the entire coefficient vectors 
between two consecutive iterates, say $\coeff^{m}$ and $\coeff^{m+1}$, 
is sufficiently small, that is, 
\begin{equation*}
\frac{\|\coeff^{m+1} - \coeff^{m}\|}{\|\coeff^{m+1}\|} \,\leq\, \tol\,,
\end{equation*}
where $\|\cdot\|$ stands for the usual Euclidean norm in $\R^{\DOF}$, with $\DOF$ 
denoting the total number of degrees of freedom defining the finite element subspaces
$\bbH^{\bsi}_h$ and $\bH^{\bu}_h$ (cf. \eqref{eq:subspaces-a}--\eqref{eq:subspaces-b}), 
and $\tol$ is a fixed tolerance 
chosen as $\tol = 1\mathrm{E}-6$. The individual errors are denoted by:  
\begin{equation*}
\begin{array}{c}
\ds \re(\bsi) \,:=\, \|\bsi - \bsi_{h}\|_{\bdiv;\Omega} \,,\quad
\re(\bu) \,:=\, \|\bu - \bu_h\|_{1,\Omega} \,,\quad
\re(p) \,:=\, \|p - p_h\|_{0,\Omega} \,, \\[2ex]
\ds \ds \re(\bG) \,:=\, \|\bG - \bG_{h}\|_{0,\Omega} \,,\quad
\re(\bomega) \,:=\, \|\bomega - \bomega_h\|_{0,\Omega} \,,\qan
\re(\wt{\bsi}) \,:=\, \|\wt{\bsi} - \wt{\bsi}_h\|_{0,\Omega} \,,
\end{array}
\end{equation*} 
where the pressure $p$, the velocity gradient $\bG$, the vorticity $\bomega$, and the shear stress tensor $\wt{\bsi}$ are further variables of physical interest that are recovered by using the corresponding postprocessing formulae $p_h, \bG_h, \bomega_h$, and $\wt{\bsi}_h$ detailed in Appendix \ref{sec:appendix-A}.
In turn, the global error and effectivity indexes associated to the global estimators $\Theta_1$ and $\wh{\Theta}_2$ are denoted, respectively, by
\begin{equation*}
\re(\vec{\bsi}) := \re(\bsi) + \re(\bu)\,,\quad {\tt eff}(\Theta_1) := \frac{\re(\vec{\bsi})}{\Theta_1}\,,
\qan {\tt eff}(\wh{\Theta}_2) := \frac{\re(\vec{\bsi})}{\wh{\Theta}_2}\,.
\end{equation*}
Moreover, using the fact that $\DOF^{-1/d} \cong h$, the respective experimental rates of convergence
are computed as
\begin{equation*}
\sr(\star) \,:=\, -\,d\,\frac{\log\big(\re(\star)/\re'(\star)\big)}{\log(\DOF/\DOF')}\quad 
\mbox{for each }\, \star\in\big\{\bsi,\bu,p,\bG,\bomega,\wt{\bsi}, \vec{\bsi}\big\}\,,
\end{equation*}
where $\DOF$ and $\DOF'$ denote the total degrees of freedom associated to two consecutive 
triangulations with errors $\re(\star)$ and $\re'(\star)$, respectively.

\medskip
The examples to be considered in this section are described next. 
In all of them, for sake of simplicity, we take $\nu=1$, and choose the
parameters $\kappa_1, \kappa_2$ in agreement with Remark \ref{rem:kappa-parameters}, that is, 
$\kappa_1 = \nu$ and $\kappa_2 = \nu/2$.
In turn, in the first three examples we consider $\tF = 10$ and $\alpha=1$.
In addition, it is easy to see for these examples that the boundary datum 
$\bu_\rD := \bu|_{\Gamma}$ satisfies the required regularity
$\bu_\rD\in \bH^{1}(\Gamma)$ since the given exact solution $\bu$ is sufficiently regular.
Furthermore, the condition $\int_{\Omega} \tr(\bsi_{h}) = 0$ 
is imposed via a Lagrange multiplier strategy.
Example 1 is used to show the accuracy of the method and the behaviour of the effectivity 
indexes of the {\it a posteriori} error estimators $\Theta_1$ and $\wh{\Theta}_2$, 
whereas Examples 2--3 and 4 are utilized to illustrate the associated adaptive algorithm, 
with and without manufactured solutions, respectively, in both $2$D and $3$D domains.
The corresponding adaptivity procedure, taken from \cite{Verfurth}, is described as follows:

\begin{enumerate}
\item[(1)] Start with a coarse mesh $\cT_h$ of $\ov{\Omega}$.
	
\item[(2)] Solve the Newton iterative method associated with \eqref{eq:CBF-augmented-discrete-formulation-H0} on the current mesh.
	
\item[(3)] Compute the local indicator $\Theta_{1,T}$ (cf. \eqref{eq:local-estimator-1}) for each $T\in\cT_{h}$
	
\item[(4)] Check the stopping criterion and decide whether to finish or go to next step.
	
\item[(5)]
Use the automatic meshing algorithm {\sf adaptmesh} from \cite[Section 9.1.9]{Hecht2018} to refine each $T'\in \cT_h$ satisfying:
\begin{equation}\label{cri-5}
\Theta_{1,T'} \,\geq\, C_{\sf adm}\,\frac{1}{\#\,T} \sum_{T\in \cT_h} \Theta_{1,T},\quad \mbox{for some }\, C_{\sf adm}\in (0,1),
\end{equation}
where $\#\,T$ denotes the number of triangles of the mesh $\cT_h$.
	
\item[(6)] Define the resulting mesh as the current mesh $\cT_{h}$, and go to step (2).
\end{enumerate}

In particular, in Examples 2, 4 and 3 below we take $C_{\sf adm}$ (cf. \eqref{cri-5}) equal to $0.75$ and $0.8$, respectively.
Certainly, if the refinement is with respect to the local indicator $\wh{\Theta}_{2,T}$ (cf. \eqref{eq:local-estimator-2-hat}), 
we simply replace $\Theta_{1,T'}$ and $\Theta_{1,T}$ by $\wh{\Theta}_{2,T'}$ and
$\wh{\Theta}_{2,T}$, respectively, in the criterion \eqref{cri-5}.

\subsection*{Example 1: Accuracy assessment with a smooth solution in a square domain.}

We first concentrate on the accuracy of the mixed method
as well as the properties of the {\it a posteriori} error estimators through the 
effectivity indexes ${\tt eff}(\Theta_1)$ and ${\tt eff}(\wh{\Theta}_2)$, 
under a quasi-uniform refinement strategy.
We consider the square domain $\Omega=(0,1)^2$, inertial power $\rp = 3$, and adjust the data in \eqref{eq:convective-Brinkman-Forchheimer-2}
so that the exact solution is given by the smooth functions
\begin{equation*}
\bu(x_1,x_2) =
\begin{pmatrix}
\sin(\pi\,x_1)\cos(\pi\,x_2) \\[0.5ex]
-\cos(\pi\,x_1)\,\sin(\pi\,x_2)
\end{pmatrix},
\quad p(x_1,x_2) = \cos(\pi x_1)\sin\left( \frac{\pi}{2}x_2\right)\,.
\end{equation*}
Tables \ref{table:Ex1-k0} and \ref{table:Ex1-k1} show the convergence history for 
a sequence of quasi-uniform mesh refinements, including the average number of Newton iterations. 
Notice that we are able not only to approximate the original unknowns
but also the pressure field, the velocity gradient tensor, the vorticity, and the shear stress tensor
through the formula \eqref{eq:postprocess-vars-app}.
The results illustrate that the optimal rates of convergence $\cO(h^{k+1})$ 
established in Theorem \ref{thm:approximation} and Lemma \ref{lem:rate-of-convergence-further-variables} are attained for $k = 0, 1$.
In addition, the global {\it a posteriori} error indicators $\Theta_1, \wh{\Theta}_2$ 
and their respective effectivity indexes are also displayed there, from where 
we highlight that the latter remain always bounded.

\subsection*{Example 2: Adaptivity in a 2D L-shaped domain.}

We now aim at testing the features of adaptive mesh refinement after both 
{\it a posteriori} error estimators $\Theta_1$ and $\wh{\Theta}_2$ 
(cf. \eqref{eq:global-estimator-1}, \eqref{eq:global-estimator-2-hat}).  
We consider an L-shaped domain $\Omega := (-1,1)^2\setminus (0,1)^2$
and inertial power $\rp = 3.5$. 
The manufactured solution is given by
\begin{equation*}
\bu(x_1,x_2) =
\begin{pmatrix}
-\pi \sin(\pi x_1) \cos(\pi x_2)\\[0.5ex]
 \pi \cos(\pi x_1) \sin(\pi x_2)
\end{pmatrix},
\quad p(x_1,x_2)=\dfrac{10\,(1-x_1)}{(x_1 - 0.09)^2 + (x_2 - 0.09)^2} -p_{0}\,, 
\end{equation*}
where $p_0 \in \R$ is chosen so that $p\in \L^2_0(\Omega)$. Observe that 
the pressure field exhibit high gradients near the vertex $(0,0)$.
Figure \ref{figure:Ex2-loglog} summarizes the convergence history of the method when applied 
to quasi-uniform and adaptive (via $\Theta_1$ and $\wh{\Theta}_2$) refinements of the domain, 
which yield sub-optimal and optimal rates of convergence, respectively. 
For sake of simplicity and since the behavior of the method for $k=0$ and $k=1$ are similar, we only detail in Tables \ref{table:Ex2-k1-uniform}, \ref{table:Ex2-k1-adaptive-1}, and \ref{table:Ex2-k1-adaptive-2}, the case $k=1$, where the errors, rates of convergence, efficiency indexes, and Newton iterations are displayed for both refinements.
Notice how the adaptive algorithms improves the efficiency of the method 
by delivering quality solutions at a lower computational cost, to the point that 
it is possible to get a better one (in terms of $\re(\vec{\bsi})$) with approximately 
only the $4\%$ of the degrees of freedom of the last quasi-uniform mesh for 
the mixed scheme.
Furthermore, the inital mesh and some approximate solutions built using the 
$\bbRT_{1}-\bP_{2}$ scheme (via the indicator $\Theta_1$) 
with $25,187$ triangle elements (actually representing $354,680\,\DOF$), 
are shown in Figure \ref{figure:Ex2-k0-aprox}. 
In particular, we observe that the pressure exhibits high gradients 
near the contraction region of the L-shaped domain. 
In turn, examples of some adapted meshes for $k = 0$ guided by $\Theta_1$ 
and $\wh{\Theta}_2$ are collected in Figure \ref{figure:Ex2-Th-k0-k1}. 
We can observe a clear clustering of elements near 
the corner region of the contraction of the L-shaped domain as we expected.

\subsection*{Example 3: Adaptivity in a 3D L-shaped domain.}

Here we replicate the Example 2 in a three-dimensional setting by considering the 
3D L-shaped domain $\Omega := (-0.5,0.5) \times (0,0.5)\times (-0.5,0.5)\setminus (0,0.5)^3$,
inertial power $\rp = 3.7$, 
and the manufactured exact solution 
\begin{equation*}
\bu(x_1,x_2,x_3) =
\begin{pmatrix}
\sin(\pi x_1)\cos(\pi x_2)\cos(\pi x_3)  \\[0.5ex]
-2\cos(\pi x_1)\sin(\pi x_2)\cos(\pi x_3) \\[0.5ex]
\cos(\pi x_1)\cos(\pi x_2)\sin(\pi x_3)
\end{pmatrix},
\,\,\,  p(x_1,x_2,x_3) = \frac{10\,x_3}{(x_1 - 0.04)^2 + (x_3 - 0.04)^2} -p_{0}  \,.
\end{equation*}
Tables \ref{table:Ex3-k0-uniform}, \ref{table:Ex3-k0-adaptive-1}, and \ref{table:Ex3-k0-adaptive-2} 
confirm a disturbed convergence under quasi-uniform refinement,
whereas optimal convergence rates are obtained when adaptive refinements guided by 
the {\it a posteriori} error estimators $\Theta_1$ and $\wh{\Theta}_2$, with $k=0$, are used. 
In turn, the initial mesh and some approximated solutions after four mesh refinement steps 
(via $\Theta_1$) are collected in Figure \ref{figure:Ex3-k0-aprox}. In particular, we see there
that the pressure presents high values and hence, most likely, high gradients as well near 
the contraction region of the 3D L-shaped domain, as we expected. 
The latter is complemented with Figure \ref{figure:Ex3-Th-k0}, where snapshots of 
three meshes via $\Theta_1$ show a clustering of elements in the same region.  
The refinement obtained via $\wh{\Theta}_2$ is similar to the ones for $\Theta_1$, reason why the corresponding plots are omitted.

\subsection*{Example 4: Flow through a 2D porous media with fracture network.}

Inspired by \cite[Example 4, Section 6]{covy2022},
we finally focus on a flow through a porous medium with a fracture network considering strong jump
discontinuities of the parameters $\tF$ and $\alpha$ accross the two regions. 
We consider the square domain $\Omega=(-1,1)^2$  with an internal fracture network denoted 
as $\Omega_{\mathrm{f}}$ (see the first plot of Figure \ref{figure:Ex4-k0-aprox} below), 
and boundary $\Gamma$, whose left, right, upper and lower parts are given by
$\Gamma_{\text{left}} = \{-1\}\times (-1,1)$, $\Gamma_{\text{right}} = \{1\}\times (-1,1)$, 
$\Gamma_{\text{top}} = (-1,1)\times \{1\}$, and $\Gamma_{\text{bottom}} = (-1,1)\times \{-1\}$, 
respectively.
Note that the boundary of the internal fracture network is defined as a union of segments. 
The initial mesh file is available in 
\href{https://github.com/scaucao/Fracture_network-mesh}{\tt https://github.com/scaucao/Fracture\_network-mesh}.
We consider the convective Brinkman--Forchheimer equations 
\eqref{eq:convective-Brinkman-Forchheimer-2} in the whole domain $\Omega$,
with inertial power $\rp = 4$ 
but with different values of the parameters $\tF$ 
and $\alpha$ for the interior and the exterior of the fracture, namely
\begin{equation}\label{eq:parameters-on-fracture}
\tF = \left\{
\begin{array}{r r l}
10 & \text{in} & \Omega_{\mathrm{f}}\\[0.5ex]
1  & \text{in} & \overline{\Omega}\setminus\Omega_{\mathrm{f}}
\end{array}
\right.
\qan
\alpha = \left\{
\begin{array}{r r l}
1      & \text{in} & \Omega_{\mathrm{f}}\\[0.5ex]
1000  & \text{in} & \overline{\Omega}\setminus\Omega_{\mathrm{f}}
\end{array}
\right. \,.
\end{equation}
The parameter choice corresponds to increased inertial effect ($\tF=10$) in the fracture and
a high permeability ($\alpha=1$), compared to reduced inertial effect ($\tF=1$) in the 
porous medium and low permeability ($\alpha=1000$).
In turn, the body force term is $\f=\0$ and the boundaries conditions
are
\begin{equation}\label{eq:boundary-conditions-fracture}
\bsi\,\bn = \left\{
\begin{array}{cll}
(-0.5(y-1),\, 0)^\rt & \mbox{on} & \Gamma_{\mathrm{left}}\,, \\[1ex]
(0,\,-0.5(x-1))^\rt & \mbox{on} & \Gamma_{\mathrm{bottom}}\,,
\end{array} 
\right. \quad
\bsi\,\bn = (0,\,0)^\rt \qon \Gamma_{\mathrm{right}}\cup \Gamma_{\mathrm{top}}\,,
\end{equation}
which drives the flow in a diagonal direction from the left-bottom corner 
to the right-top corner of the square domain $\Omega$.
In Figure \ref{figure:Ex4-k0-aprox}, we display the initial mesh, 
the computed magnitude of the velocity, velocity gradient tensor, and pseudostress tensor, 
which were built using the $\bbRT_{1}-\bP_{2}$ scheme on a mesh with $142,867$ triangle 
elements (actually representing $2,008,424\,\DOF$) obtained via $\Theta_1$. 
We note that the velocity in the fractures is higher than the velocity in the porous medium, due to smaller fractures thickness and the parameter setting \eqref{eq:parameters-on-fracture}. 
Also, the velocity is higher in branches of the network where the fluid enters from the left-bottom corner and decreases toward the right-top corner of the domain. 
In addition, we observe a sharp velocity gradient across the interfaces between the fractures and the porous medium. The pseudostress is consistent with the boundary conditions \eqref{eq:boundary-conditions-fracture} and it is more diffused since it includes the pressure field. 
This example illustrates the ability of the method to provide accurate resolution and numerically stable results for heterogeneous inclusions with high aspect ratio and complex geometry, as presented in the network of thin fractures.
In turn, snapshots of some 
adapted meshes generated using $\Theta_1$ and $\wh{\Theta}_2$, are depicted in Figure \ref{figure:Ex4-Th-k0}.
Notice that the meshes obtained via the indicator $\Theta_{1}$ are slightly more refined in the interior of the domain than the meshes obtained via the indicator $\wh{\Theta}_2$. 
This fact is justified by the terms that capture the jumps between triangles, which arise from the Helmoltz decomposition applied to the convective Brinkman--Forchheimer equations and the consequent local integration by parts procedures. 
We conclude that the estimator $\Theta_{1}$ is slightly more sensible than $\wh{\Theta}_2$ to detect the strong jump discontinuities of the model parameters along the interface between the fracture and porous media and at the same time localizes the regions where the solutions are higher.
In this sense, and as suggested by the present example, the estimator $\Theta_{1}$ would be preferable if strong discontinuities of the model parameters and higher velocities in the fractures are considered.
\begin{table}[ht] 
\begin{center}
\small{		
\begin{tabular}{r | c | c | c | c | c | c | c | c | c | c }
\hline
$\DOF$ & $h$ & $\iter$ & $\re(\bsi)$ & $\sr(\bsi)$ & $\re(\bu)$ & $\sr(\bu)$ & $\re(p)$ & $\sr(p)$ & $\re(\bG)$ & $\sr(\bG)$ \\
\hline \hline
178    & 0.373 & 4 & 4.21E-00 &  --  & 8.91E-01 &  --  & 3.69E-01 &  --  & 7.34E-01 &  --  \\
674    & 0.196 & 4 & 1.82E-00 & 1.26 & 4.59E-01 & 1.00 & 1.57E-01 & 1.28 & 3.44E-01 & 1.14 \\
2546   & 0.097 & 4 & 9.08E-01 & 1.04 & 2.33E-01 & 1.02 & 7.40E-02 & 1.13 & 1.75E-01 & 1.02 \\
9922   & 0.048 & 4 & 4.41E-01 & 1.06 & 1.18E-01 & 1.00 & 3.50E-02 & 1.10 & 8.77E-02 & 1.02 \\
39210  & 0.025 & 4 & 2.23E-01 & 0.99 & 5.91E-02 & 1.00 & 1.81E-02 & 0.96 & 4.38E-02 & 1.01 \\
157610 & 0.013 & 4 & 1.10E-01 & 1.01 & 2.92E-02 & 1.01 & 8.75E-03 & 1.05 & 2.18E-02 & 1.01 \\
\hline
\end{tabular}
			
\smallskip
			
\begin{tabular}{c | c | c | c | c | c | c | c | c | c}
\hline
$\re(\bomega)$ & $\sr(\bomega)$ & $\re(\wt{\bsi})$ & $\sr(\wt{\bsi})$ & $\re(\vec{\bsi})$ & $\sr(\vec{\bsi})$ & $\Theta_1$ & ${\tt eff}(\Theta_1)$ & $\wh{\Theta}_2$ & ${\tt eff}(\wh{\Theta}_2)$  \\
\hline \hline
3.82E-01 &  --  & 1.36E-00 &  --  & 4.30E-00 &  --  & 5.45E-00 & 0.790 & 4.75E-00 & 0.906 \\
1.88E-01 & 1.06 & 6.17E-01 & 1.18 & 1.87E-00 & 1.25 & 2.47E-00 & 0.759 & 2.08E-00 & 0.900 \\
9.99E-02 & 0.95 & 3.06E-01 & 1.06 & 9.37E-01 & 1.04 & 1.27E-00 & 0.739 & 1.04E-00 & 0.905 \\
5.13E-02 & 0.98 & 1.51E-01 & 1.04 & 4.56E-01 & 1.06 & 6.37E-01 & 0.717 & 5.10E-01 & 0.895 \\
2.52E-02 & 1.04 & 7.62E-02 & 0.99 & 2.31E-01 & 0.99 & 3.18E-01 & 0.724 & 2.53E-01 & 0.910 \\
1.27E-02 & 0.99 & 3.75E-02 & 1.02 & 1.14E-01 & 1.01 & 1.58E-01 & 0.721 & 1.26E-01 & 0.909 \\
\hline
\end{tabular}
\caption{[Example 1] $\bbRT_{0}-\bP_{1}$ scheme with quasi-uniform refinement.}
\label{table:Ex1-k0}
}
\end{center}
\end{table} 
\begin{table}[ht] 
\begin{center}
\small{		
\begin{tabular}{r | c | c | c | c | c | c | c | c | c | c }
\hline
$\DOF$ & $h$ & $\iter$ & $\re(\bsi)$ & $\sr(\bsi)$ & $\re(\bu)$ & $\sr(\bu)$ & $\re(p)$ & $\sr(p)$ & $\re(\bG)$ & $\sr(\bG)$ \\
\hline \hline
570    & 0.373 & 4 & 4.66E-01 &  --  & 1.77E-01 &  --  & 5.06E-02 &  --  & 9.87E-02 &  --  \\
2258   & 0.196 & 4 & 1.12E-01 & 2.08 & 3.36E-02 & 2.41 & 9.64E-03 & 2.41 & 1.84E-02 & 2.44 \\
8714   & 0.097 & 4 & 2.84E-02 & 2.03 & 8.37E-03 & 2.06 & 2.42E-03 & 2.05 & 4.70E-03 & 2.02 \\
34338  & 0.048 & 4 & 7.26E-03 & 1.99 & 1.96E-03 & 2.12 & 5.78E-04 & 2.09 & 1.12E-03 & 2.10 \\
136462 & 0.025 & 4 & 1.82E-03 & 2.00 & 5.09E-04 & 1.95 & 1.48E-04 & 1.97 & 2.87E-04 & 1.97 \\
550094 & 0.013 & 4 & 4.43E-04 & 2.03 & 1.25E-04 & 2.02 & 3.59E-05 & 2.03 & 7.03E-05 & 2.02 \\
\hline
\end{tabular}
			
\smallskip
			
\begin{tabular}{c | c | c | c | c | c | c | c | c | c}
\hline
$\re(\bomega)$ & $\sr(\bomega)$ & $\re(\wt{\bsi})$ & $\sr(\wt{\bsi})$ & $\re(\vec{\bsi})$ & $\sr(\vec{\bsi})$ & $\Theta_1$ & ${\tt eff}(\Theta_1)$ & $\wh{\Theta}_2$ & ${\tt eff}(\wh{\Theta}_2)$  \\
\hline \hline
5.43E-02 &  --  & 1.80E-01 &  --  & 4.99E-01 &  --  & 9.29E-01 & 0.537 & 6.79E-01 & 0.734 \\
9.25E-03 & 2.57 & 3.45E-02 & 2.40 & 1.17E-01 & 2.11 & 1.89E-01 & 0.617 & 1.52E-01 & 0.770 \\
2.33E-03 & 2.05 & 8.87E-03 & 2.01 & 2.96E-02 & 2.03 & 4.79E-02 & 0.618 & 3.61E-02 & 0.819 \\
5.41E-04 & 2.13 & 2.11E-03 & 2.09 & 7.52E-03 & 2.00 & 1.16E-02 & 0.647 & 8.93E-03 & 0.843 \\
1.39E-04 & 1.97 & 5.44E-04 & 1.97 & 1.89E-03 & 2.00 & 2.97E-03 & 0.636 & 2.18E-03 & 0.869 \\
3.45E-05 & 2.00 & 1.33E-04 & 2.02 & 4.60E-04 & 2.03 & 7.28E-04 & 0.632 & 5.22E-04 & 0.882 \\
\hline
\end{tabular}
\caption{[Example 1] $\bbRT_{1}-\bP_{2}$ scheme with quasi-uniform refinement.}
\label{table:Ex1-k1}
}
\end{center}
\end{table}   
\begin{table}[ht] 
\begin{center}
\small{		
\begin{tabular}{r | c | c | c | c | c | c | c | c | c | c }
\hline
$\DOF$ & $h$ & $\iter$ & $\re(\bsi)$ & $\sr(\bsi)$ & $\re(\bu)$ & $\sr(\bu)$ & $\re(p)$ & $\sr(p)$ & $\re(\bG)$ & $\sr(\bG)$ \\
\hline \hline
1586   & 0.400 & 6 & 7.85E+02 &  --  & 9.89E-00 &  --  & 2.84E+01 &  --  & 3.20E+01 &  --  \\
6418   & 0.190 & 6 & 3.60E+02 & 1.12 & 4.43E-00 & 1.15 & 8.89E-00 & 1.66 & 1.44E+01 & 1.14 \\
24734  & 0.103 & 6 & 1.37E+02 & 1.43 & 9.74E-01 & 2.24 & 2.81E-00 & 1.71 & 4.92E-00 & 1.60 \\
97542  & 0.051 & 6 & 4.29E+01 & 1.70 & 2.02E-01 & 2.29 & 8.20E-01 & 1.79 & 1.43E-00 & 1.80 \\
389038 & 0.027 & 6 & 1.19E+01 & 1.86 & 4.33E-02 & 2.23 & 2.20E-01 & 1.90 & 3.96E-01 & 1.85 \\
\hline
\end{tabular}
			
\smallskip
			
\begin{tabular}{c | c | c | c | c | c | c | c | c | c}
\hline
$\re(\bomega)$ & $\sr(\bomega)$ & $\re(\wt{\bsi})$ & $\sr(\wt{\bsi})$ & $\re(\vec{\bsi})$ & $\sr(\vec{\bsi})$ & $\Theta_1$ & ${\tt eff}(\Theta_1)$ & $\wh{\Theta}_2$ & ${\tt eff}(\wh{\Theta}_2)$  \\
\hline \hline
1.59E+01 &  --  & 6.86E+01 &  --  & 7.85E+02 &  --  & 8.26E+02 & 0.950 & 7.83E+02 & 1.002 \\
6.62E+00 & 1.26 & 2.86E+01 & 1.25 & 3.60E+02 & 1.12 & 3.80E+02 & 0.947 & 3.60E+02 & 0.999 \\
2.19E-00 & 1.64 & 9.66E-00 & 1.61 & 1.37E+02 & 1.43 & 1.43E+02 & 0.958 & 1.37E+02 & 1.000 \\
6.09E-01 & 1.87 & 2.83E-00 & 1.79 & 4.29E+01 & 1.70 & 4.45E+01 & 0.965 & 4.29E+01 & 1.000 \\
1.76E-01 & 1.80 & 7.74E-01 & 1.87 & 1.19E+01 & 1.86 & 1.23E+01 & 0.965 & 1.19E+01 & 1.000 \\
\hline
\end{tabular}
\caption{[Example 2] $\bbRT_{1}-\bP_{2}$ scheme with quasi-uniform refinement.}
\label{table:Ex2-k1-uniform}
}
\end{center}
\end{table}   
\begin{table}[ht]
\begin{center}
\small{				
\begin{tabular}{r | c | c | c | c | c | c | c | c | c}
\hline
$\DOF$ & $\iter$ & $\re(\bsi)$ & $\sr(\bsi)$ & $\re(\bu)$ & $\sr(\bu)$ & $\re(p)$ & $\sr(p)$ & $\re(\bG)$ & $\sr(\bG)$ \\
\hline \hline
1586   & 6 & 7.85E+02 &  --  & 9.89E-00 &  --  & 2.84E+01 &  --  & 3.20E+01 &  --  \\
2256   & 6 & 2.48E+02 & 6.53 & 2.49E-00 & 7.83 & 5.57E-00 & 9.25 & 9.59E-00 & 6.85 \\
3434   & 6 & 5.22E+01 & 7.43 & 6.87E-01 & 6.13 & 1.46E-00 & 6.38 & 2.45E-00 & 6.50 \\
6936   & 6 & 1.77E+01 & 3.08 & 5.96E-01 & 0.41 & 8.72E-01 & 1.46 & 1.45E-00 & 1.48 \\
16656  & 6 & 7.59E-00 & 1.93 & 4.50E-01 & 0.64 & 3.79E-01 & 1.90 & 6.23E-01 & 1.93 \\
46862  & 6 & 2.71E-00 & 1.99 & 9.68E-02 & 2.97 & 1.22E-01 & 2.20 & 2.02E-01 & 2.18 \\
126940 & 6 & 1.01E-00 & 1.98 & 5.31E-02 & 1.20 & 5.02E-02 & 1.77 & 8.40E-02 & 1.76 \\
354680 & 6 & 3.62E-01 & 2.00 & 1.46E-02 & 2.52 & 1.74E-02 & 2.07 & 2.89E-02 & 2.07 \\
\hline
\end{tabular}
			
\smallskip
			
\begin{tabular}{c | c | c | c | c | c | c | c}
\hline
$\re(\bomega)$ & $\sr(\bomega)$ & $\re(\wt{\bsi})$ & $\sr(\wt{\bsi})$ & $\re(\vec{\bsi})$ & $\sr(\vec{\bsi})$ & $\Theta_1$ & ${\tt eff}(\Theta_1)$  \\
\hline \hline
1.59E+01 &  --  & 6.86E+01 &  --  & 7.85E+02 &  --  & 8.26E+02 & 0.950 \\
3.84E-00 & 8.09 & 1.93E+01 & 7.21 & 2.48E+02 & 6.53 & 2.57E+02 & 0.965 \\
9.54E-01 & 6.62 & 4.96E-00 & 6.47 & 5.22E+01 & 7.43 & 5.53E+01 & 0.944 \\
5.55E-01 & 1.54 & 2.96E-00 & 1.47 & 1.77E+01 & 3.08 & 2.08E+01 & 0.848 \\
2.46E-01 & 1.86 & 1.26E-00 & 1.94 & 7.60E-00 & 1.93 & 8.95E-00 & 0.849 \\
7.68E-02 & 2.25 & 4.12E-01 & 2.17 & 2.72E-00 & 1.99 & 3.08E-00 & 0.882 \\
3.24E-02 & 1.73 & 1.71E-01 & 1.77 & 1.01E-00 & 1.98 & 1.18E-00 & 0.861 \\
1.10E-02 & 2.10 & 5.89E-02 & 2.07 & 3.62E-01 & 2.01 & 4.15E-01 & 0.871 \\
\hline
\end{tabular}
\caption{[Example 2] $\bbRT_{1}-\bP_{2}$ scheme with adaptive refinement via $\Theta_1$.}
\label{table:Ex2-k1-adaptive-1}
}		
\end{center}
\end{table}
\begin{table}[ht]
\begin{center}
\small{				
\begin{tabular}{r | c | c | c | c | c | c | c | c | c}
\hline
$\DOF$ & $\iter$ & $\re(\bsi)$ & $\sr(\bsi)$ & $\re(\bu)$ & $\sr(\bu)$ & $\re(p)$ & $\sr(p)$ & $\re(\bG)$ & $\sr(\bG)$ \\
\hline \hline
1586   & 6 & 7.85E+02 &  --  & 9.89E-00 &  --  & 2.84E+01 &  --  & 3.20E+01 &  --  \\
2256   & 6 & 2.48E+02 & 6.53 & 2.49E-00 & 7.83 & 5.57E-00 & 9.25 & 9.59E-00 & 6.85 \\
3434   & 6 & 5.22E+01 & 7.43 & 6.87E-01 & 6.13 & 1.46E-00 & 6.38 & 2.45E-00 & 6.50 \\
6994   & 6 & 1.84E+01 & 3.12 & 6.00E-01 & 0.41 & 9.04E-01 & 1.43 & 1.51E-00 & 1.45 \\
16414  & 6 & 7.68E-00 & 1.95 & 5.01E-01 & 0.40 & 4.33E-01 & 1.64 & 6.99E-01 & 1.71 \\
42758  & 6 & 2.85E-00 & 2.07 & 1.76E-01 & 2.18 & 1.70E-01 & 1.96 & 2.84E-01 & 1.88 \\
120274 & 6 & 1.06E-00 & 1.91 & 8.09E-02 & 1.51 & 6.83E-02 & 1.76 & 1.14E-01 & 1.77 \\
319958 & 6 & 3.85E-01 & 2.07 & 2.13E-02 & 2.73 & 2.15E-02 & 2.36 & 3.63E-02 & 2.34 \\
\hline
\end{tabular}
			
\smallskip
			
\begin{tabular}{c | c | c | c | c | c | c | c}
\hline
$\re(\bomega)$ & $\sr(\bomega)$ & $\re(\wt{\bsi})$ & $\sr(\wt{\bsi})$ & $\re(\vec{\bsi})$ & $\sr(\vec{\bsi})$ & $\wh{\Theta}_2$ & ${\tt eff}(\wh{\Theta}_2)$  \\
\hline \hline
1.59E+01 &  --  & 6.86E+01 &  --  & 7.85E+02 &  --  & 7.83E+02 & 1.002 \\
3.84E-00 & 8.09 & 1.93E+01 & 7.21 & 2.48E+02 & 6.53 & 2.49E+02 & 0.999 \\
9.54E-01 & 6.62 & 4.96E-00 & 6.47 & 5.22E+01 & 7.43 & 5.23E+01 & 0.999 \\
5.79E-01 & 1.49 & 3.06E-00 & 1.44 & 1.84E+01 & 3.12 & 1.85E+01 & 0.997 \\
2.68E-01 & 1.72 & 1.43E-00 & 1.70 & 7.69E-00 & 1.95 & 7.79E-00 & 0.988 \\
1.10E-01 & 1.86 & 5.77E-01 & 1.90 & 2.85E-00 & 2.07 & 2.90E-00 & 0.986 \\
4.33E-02 & 1.80 & 2.32E-01 & 1.77 & 1.06E-00 & 1.91 & 1.07E-00 & 0.990 \\
1.41E-02 & 2.30 & 7.35E-02 & 2.35 & 3.86E-01 & 2.07 & 3.87E-01 & 0.997 \\
\hline
\end{tabular}
\caption{[Example 2] $\bbRT_{1}-\bP_{2}$ scheme with adaptive refinement via $\wh{\Theta}_2$.}
\label{table:Ex2-k1-adaptive-2}
}		
\end{center}
\end{table}
\begin{table}[ht]
\begin{center}
\small{				
\begin{tabular}{r | c | c | c | c | c | c | c | c | c | c}
\hline
$\DOF$ & $h$ & $\iter$ & $\re(\bsi)$ & $\sr(\bsi)$ & $\re(\bu)$ & $\sr(\bu)$ & $\re(p)$ & $\sr(p)$ & $\re(\bG)$ & $\sr(\bG)$ \\
\hline \hline
1221    & 0.354 & 4 & 1.72E+02 &  --  & 2.52E-00 &  --  & 8.67E-00 &  --  & 5.97E-00 &  --  \\
8559    & 0.177 & 4 & 1.84E+02 &  --  & 1.95E-00 & 0.40 & 6.64E-00 & 0.43 & 5.15E-00 & 0.23 \\
64059   & 0.088 & 4 & 1.37E+02 & 0.43 & 1.23E-00 & 0.69 & 3.55E-00 & 0.91 & 3.54E-00 & 0.56 \\
333609  & 0.051 & 4 & 9.28E+01 & 0.71 & 7.14E-01 & 0.99 & 1.98E-00 & 1.06 & 2.39E-00 & 0.72 \\
1276641 & 0.032 & 4 & 6.50E+01 & 0.80 & 4.15E-01 & 1.22 & 1.24E-00 & 1.04 & 1.64E-01 & 0.84 \\
\hline
\end{tabular}
		
\smallskip
		
\begin{tabular}{c | c | c | c | c | c | c | c | c | c}
\hline
$\re(\bomega)$ & $\sr(\bomega)$ & $\re(\wt{\bsi})$ & $\sr(\wt{\bsi})$ & $\re(\vec{\bsi})$ & $\sr(\vec{\bsi})$ & $\Theta_1$ & ${\tt eff}(\Theta_1)$ & $\wh{\Theta}_2$ & ${\tt eff}(\wh{\Theta}_2)$  \\
\hline \hline
3.29E-00 &  --  & 1.80E+01 &  --  & 1.72E+02 &  --  & 1.72E+02 & 1.001 & 1.72E+02 & 1.002 \\
2.56E-00 & 0.39 & 1.44E+01 & 0.34 & 1.84E+02 &  --  & 1.83E+02 & 1.001 & 1.83E+02 & 1.001 \\
1.62E-00 & 0.68 & 8.80E-00 & 0.74 & 1.37E+02 & 0.43 & 1.40E+02 & 1.001 & 1.37E+02 & 1.001 \\
1.05E-00 & 0.79 & 5.49E-00 & 0.86 & 9.28E+01 & 0.71 & 9.27E+01 & 1.001 & 9.28E+01 & 1.000 \\
7.00E-01 & 0.91 & 3.66E-00 & 0.91 & 6.50E+01 & 0.80 & 6.49E+01 & 1.000 & 6.50E+01 & 1.000 \\
\hline
\end{tabular}
\caption{[Example 3] $\bbRT_{0}-\bP_{1}$ scheme with quasi-uniform refinement.}
\label{table:Ex3-k0-uniform}
}		
\end{center}
\end{table}
\begin{table}[ht]
\begin{center}
\small{				
\begin{tabular}{r | c | c | c | c | c | c | c | c | c}
\hline
$\DOF$ & $\iter$ & $\re(\bsi)$ & $\sr(\bsi)$ & $\re(\bu)$ & $\sr(\bu)$ & $\re(p)$ & $\sr(p)$ & $\re(\bG)$ & $\sr(\bG)$ \\
\hline \hline
1221    & 4 & 1.72E+02 &  --  & 2.52E-00 &  --  & 8.67E-00 &  --  & 5.97E-00 &  --  \\
4761    & 4 & 1.85E+02 &  --  & 2.21E-00 & 0.29 & 6.64E-00 & 0.59 & 5.56E-00 & 0.16 \\
24309   & 4 & 1.39E+02 & 0.53 & 1.40E-00 & 0.84 & 3.54E-00 & 1.16 & 3.80E-00 & 0.70 \\
90255   & 4 & 8.74E+01 & 1.06 & 7.91E-01 & 1.31 & 1.84E-00 & 1.49 & 2.29E-00 & 1.16 \\
1040940 & 4 & 3.18E+01 & 1.24 & 3.28E-01 & 1.08 & 6.50E-01 & 1.28 & 9.10E-01 & 1.13 \\
\hline
\end{tabular}
			
\smallskip
			
\begin{tabular}{c | c | c | c | c | c | c | c}
\hline
$\re(\bomega)$ & $\sr(\bomega)$ & $\re(\wt{\bsi})$ & $\sr(\wt{\bsi})$ & $\re(\vec{\bsi})$ & $\sr(\vec{\bsi})$ & $\Theta_1$ & ${\tt eff}(\Theta_1)$  \\
\hline \hline
3.29E-00 &  --  & 1.80E+01 &  --  & 1.72E+02 &  --  & 1.71E+02 & 1.001 \\
2.80E-00 & 0.35 & 1.50E+01 & 0.41 & 1.85E+02 &  --  & 1.84E+02 & 1.002 \\
1.70E-00 & 0.92 & 9.16E-00 & 0.91 & 1.39E+02 & 0.53 & 1.39E+02 & 1.001 \\
9.12E-01 & 1.42 & 5.27E-00 & 1.26 & 8.74E+01 & 1.06 & 8.73E+01 & 1.000 \\
3.48E-01 & 1.18 & 2.02E-00 & 1.17 & 3.18E+01 & 1.24 & 3.17E+01 & 1.000 \\
\hline
\end{tabular}
\caption{[Example 3] $\bbRT_{0}-\bP_{1}$ scheme with adaptive refinement via $\Theta_1$.}
\label{table:Ex3-k0-adaptive-1}
}		
\end{center}
\end{table}
\begin{table}[ht]
\begin{center}
\small{				
\begin{tabular}{r | c | c | c | c | c | c | c | c | c}
\hline
$\DOF$ & $\iter$ & $\re(\bsi)$ & $\sr(\bsi)$ & $\re(\bu)$ & $\sr(\bu)$ & $\re(p)$ & $\sr(p)$ & $\re(\bG)$ & $\sr(\bG)$ \\
\hline \hline
1221    & 4 & 1.72E+02 &  --  & 2.52E-00 &  --  & 8.67E-00 &  --  & 5.97E-00 &  --  \\
4827    & 4 & 1.85E+02 &  --  & 2.21E-00 & 0.29 & 6.63E-00 & 0.59 & 5.56E-00 & 0.16 \\
24375   & 4 & 1.40E+02 & 0.51 & 1.42E-00 & 0.83 & 3.55E-00 & 1.16 & 3.80E-00 & 0.70 \\
112431  & 4 & 8.71E+01 & 0.93 & 7.62E-01 & 1.22 & 1.82E-00 & 1.31 & 2.29E-00 & 1.00 \\
1022031 & 4 & 3.18E+01 & 1.37 & 3.26E-01 & 1.15 & 6.60E-01 & 1.38 & 9.20E-01 & 1.24 \\
\hline
\end{tabular}
			
\smallskip
			
\begin{tabular}{c | c | c | c | c | c | c | c}
\hline
$\re(\bomega)$ & $\sr(\bomega)$ & $\re(\wt{\bsi})$ & $\sr(\wt{\bsi})$ & $\re(\vec{\bsi})$ & $\sr(\vec{\bsi})$ & $\wh{\Theta}_2$ & ${\tt eff}(\wh{\Theta}_2)$  \\
\hline \hline
3.29E-00 &  --  & 1.80E+01 &  --  & 1.72E+02 &  --  & 1.72E+02 & 1.002 \\
2.80E-00 & 0.35 & 1.50E+01 & 0.41 & 1.85E+02 &  --  & 1.84E+02 & 1.002 \\
1.71E-00 & 0.91 & 9.16E-00 & 0.91 & 1.40E+02 & 0.51 & 1.40E+02 & 1.001 \\
9.30E-01 & 1.20 & 5.24E-00 & 1.10 & 8.71E+01 & 0.93 & 8.71E+01 & 1.000 \\
3.50E-01 & 1.33 & 2.05E-00 & 1.28 & 3.18E+01 & 1.37 & 3.18E+01 & 1.000 \\
\hline
\end{tabular}
\caption{[Example 3] $\bbRT_{0}-\bP_{1}$ scheme with adaptive refinement via $\wh{\Theta}_2$.}
\label{table:Ex3-k0-adaptive-2}
}		
\end{center}
\end{table}
\begin{figure}[ht!]
\begin{center}
\includegraphics[height=5.5cm]{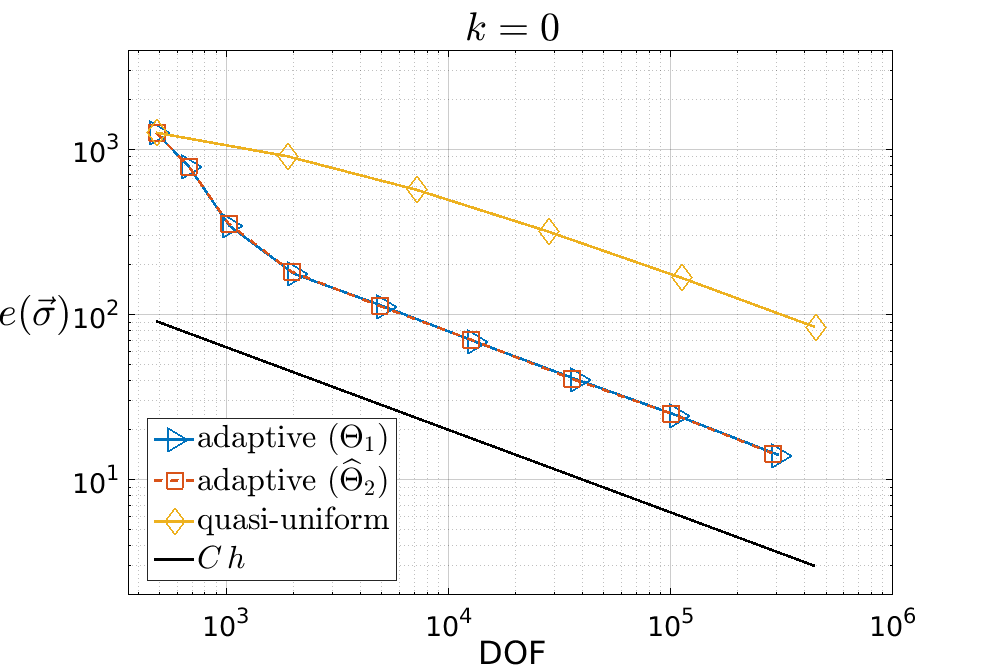} \hspace{0.5cm}
\includegraphics[height=5.5cm]{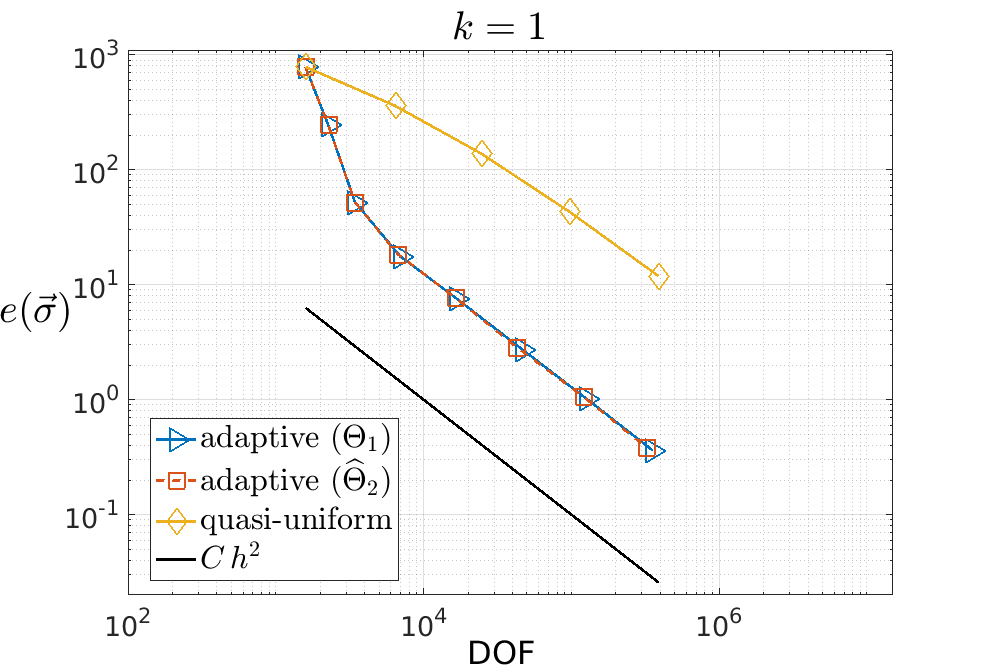}		

\vspace{-0.2cm}
		
\caption{[Example 2] Log-log plots of $\re(\vec{\bsi})$ vs. $\DOF$ for quasi-uniform/adaptative schemes via $\Theta_1$ and $\wh{\Theta}_2$ for $k = 0$ and $k = 1$ (left and right plots, respectively).}\label{figure:Ex2-loglog}
\end{center}
\end{figure}
\begin{figure}
\begin{center}
\includegraphics[width=4.15cm]{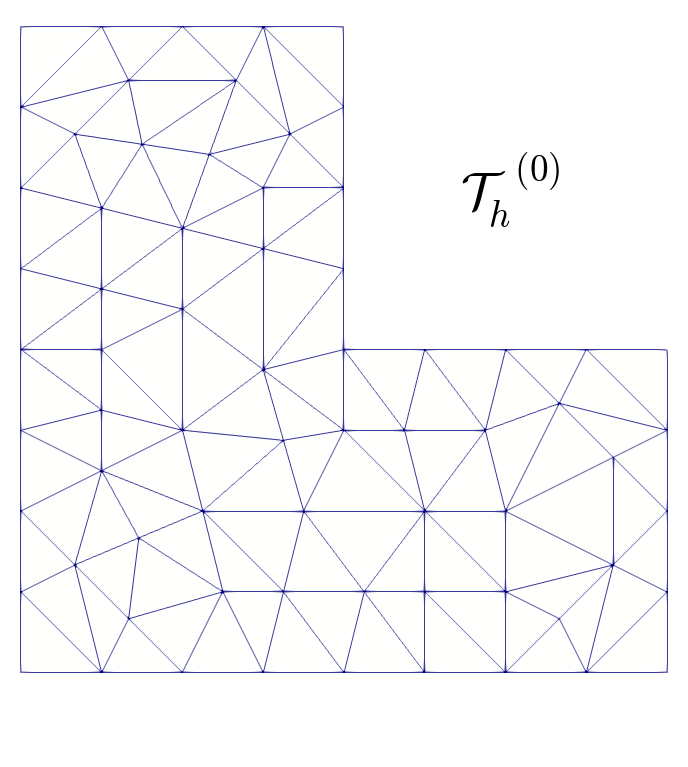} 
\includegraphics[width=4.15cm]{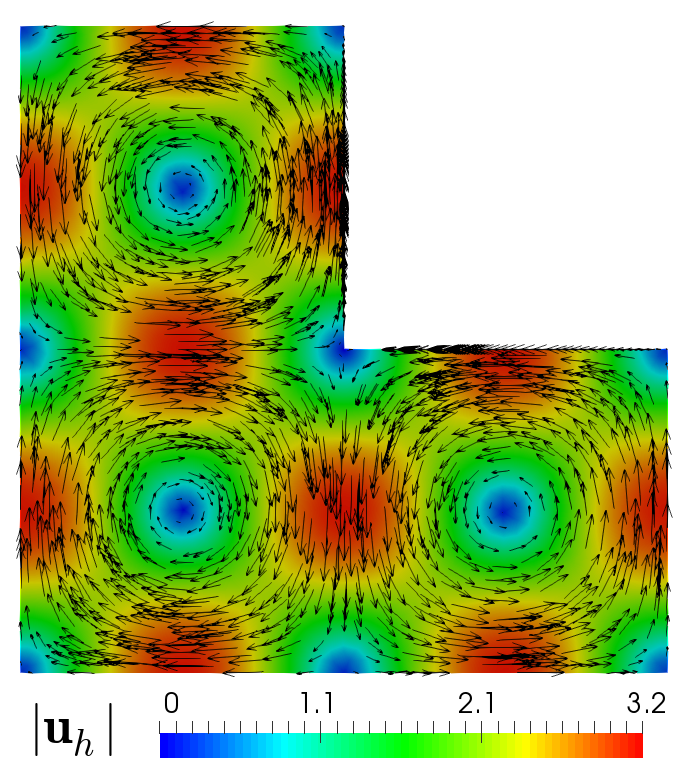} 
\includegraphics[width=4.15cm]{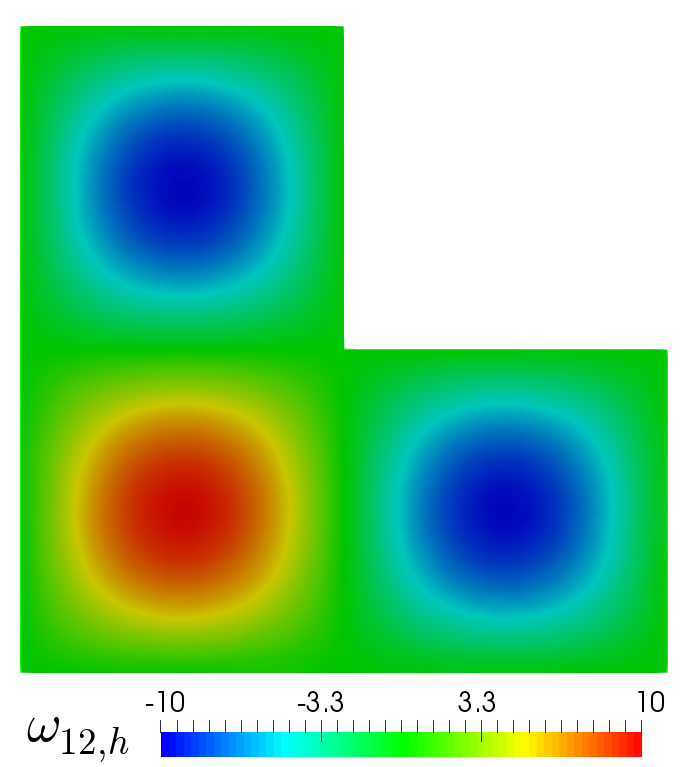} 
\includegraphics[width=4.15cm]{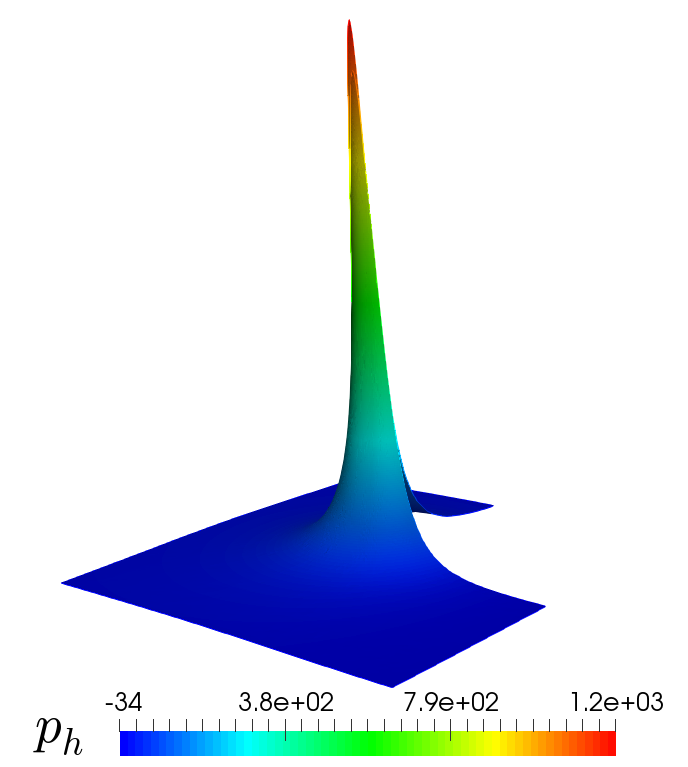} 

\vspace{-0.2cm}

\caption{[Example 2] Initial mesh, computed magnitude of the velocity, vorticity component, and pressure field.}\label{figure:Ex2-k0-aprox}
\end{center}
\end{figure}
\begin{figure}
\begin{center}
\includegraphics[width=4.15cm]{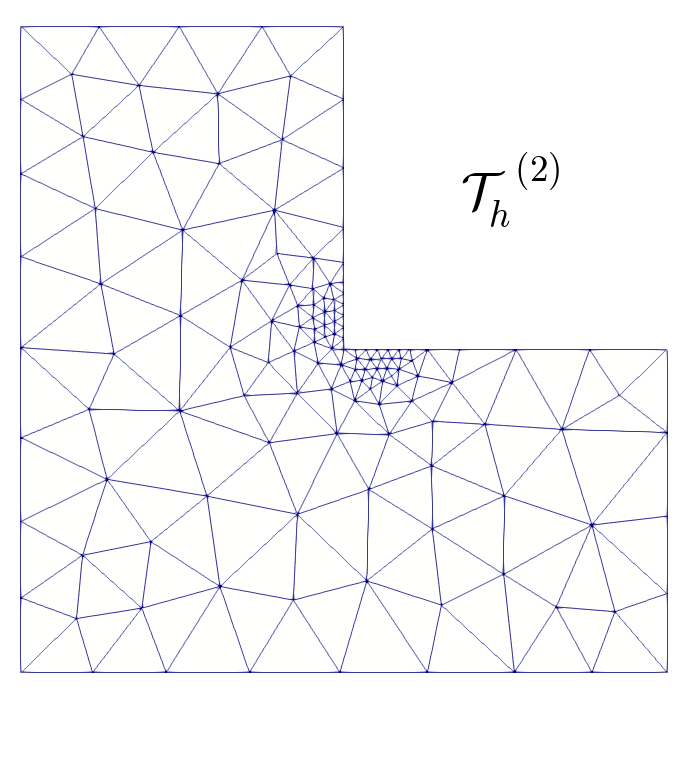} 
\includegraphics[width=4.15cm]{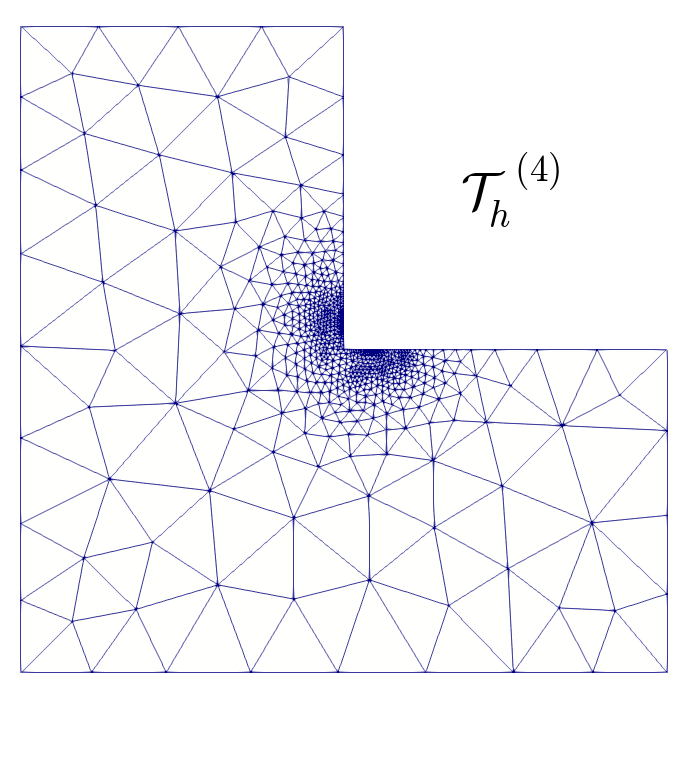} 
\includegraphics[width=4.15cm]{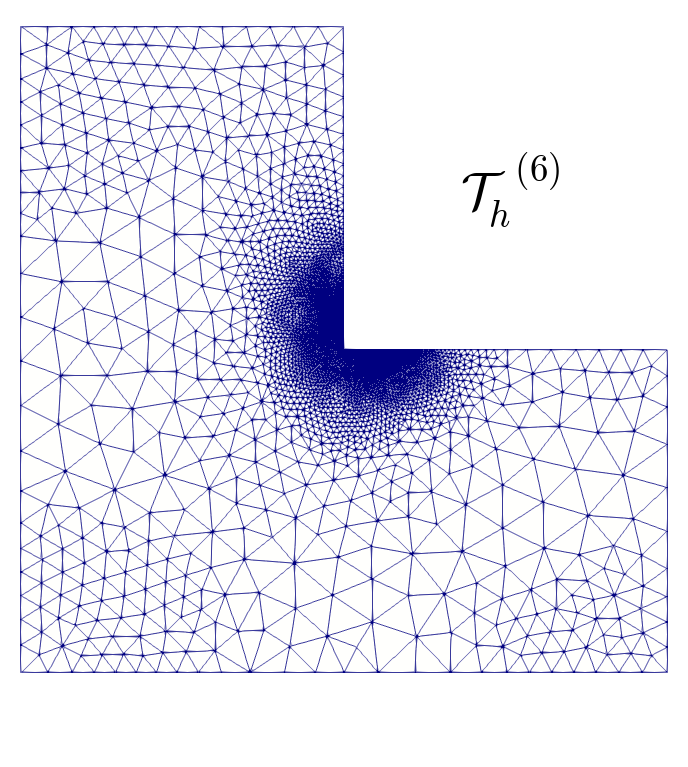} 
\includegraphics[width=4.15cm]{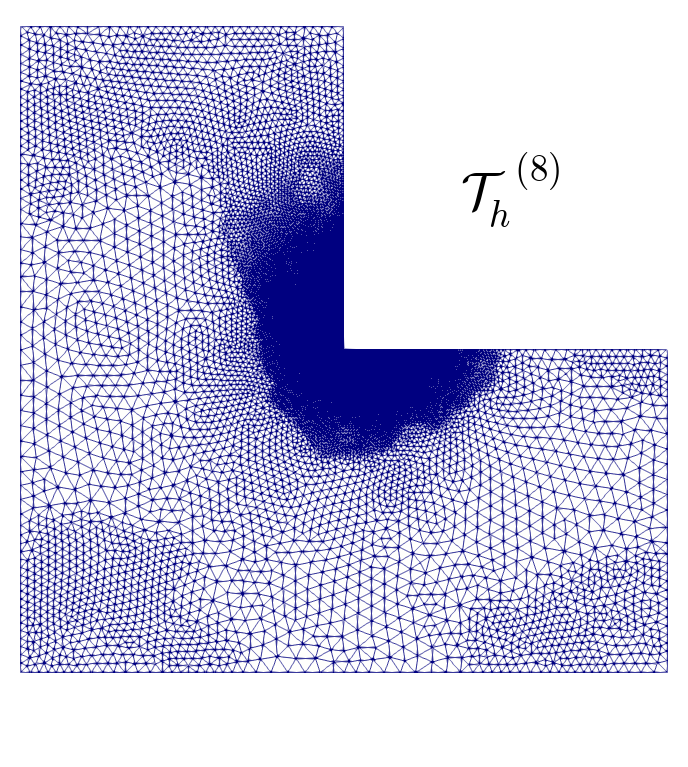} 
		
\includegraphics[width=4.15cm]{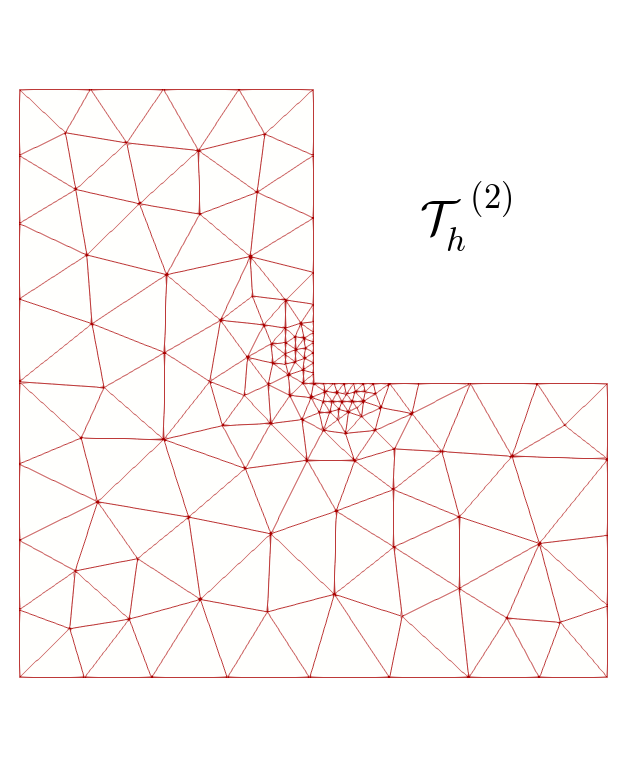} 
\includegraphics[width=4.15cm]{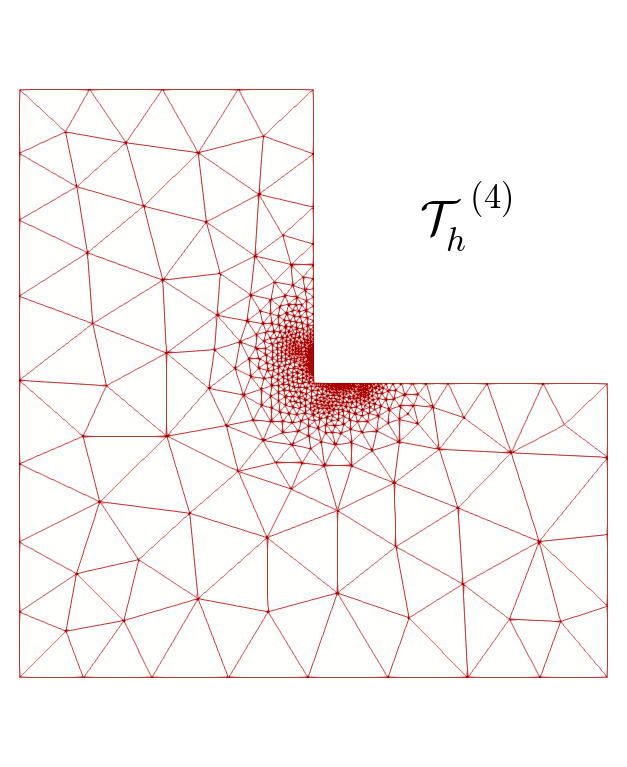} 
\includegraphics[width=4.15cm]{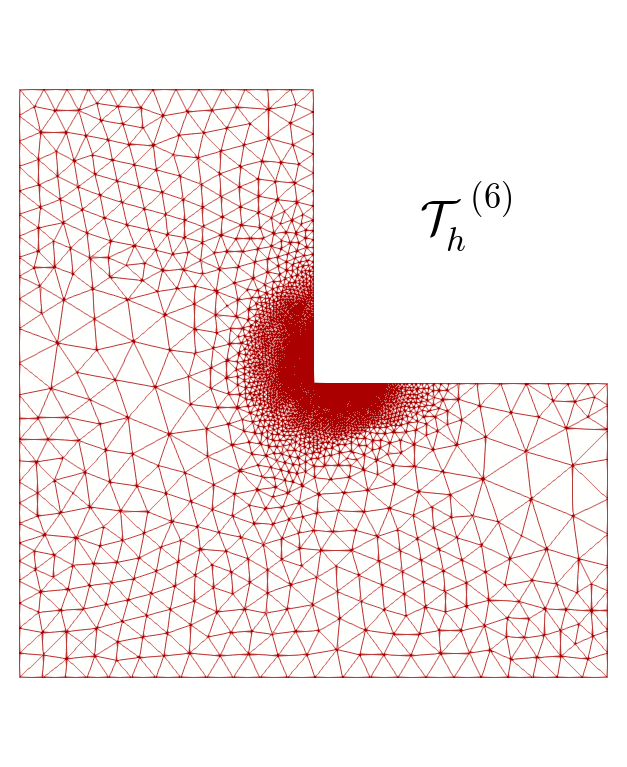} 
\includegraphics[width=4.15cm]{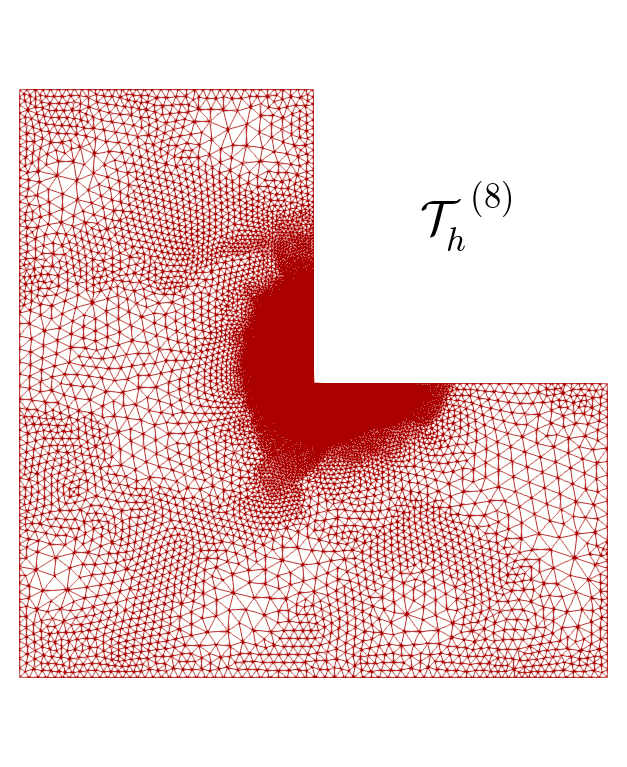} 

\vspace{-0.5cm}
		
\caption{[Example 2] Four snapshots of adapted meshes according to the indicators $\Theta_1$ and $\wh{\Theta}_2$ for $k = 0$ (top and bottom plots, respectively).}\label{figure:Ex2-Th-k0-k1}
\end{center}
\end{figure}
%
\begin{figure}
\begin{center}
\includegraphics[width=4.15cm]{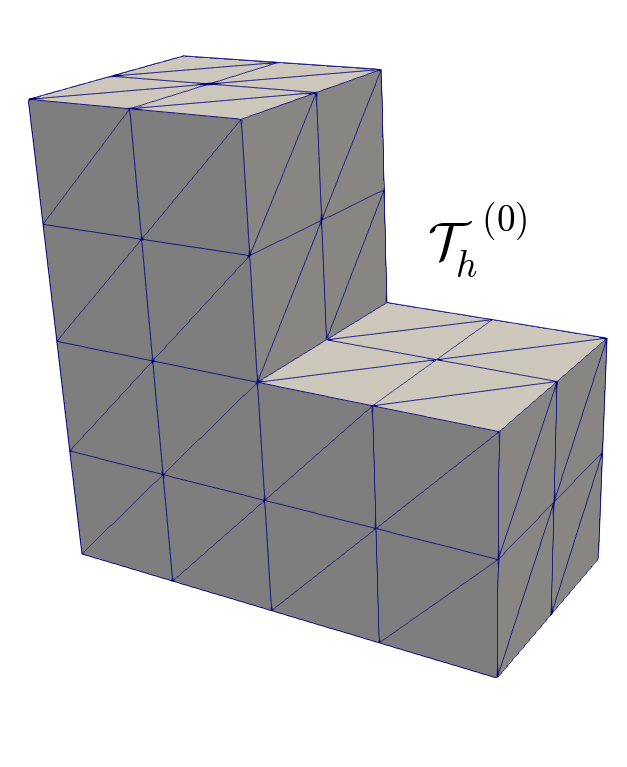} \qquad
\includegraphics[width=4.15cm]{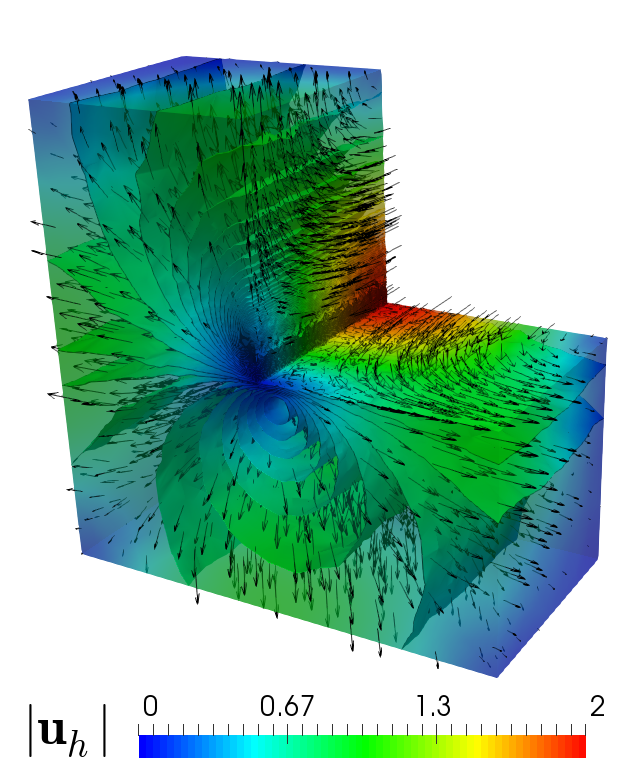} \qquad
\includegraphics[width=4.15cm]{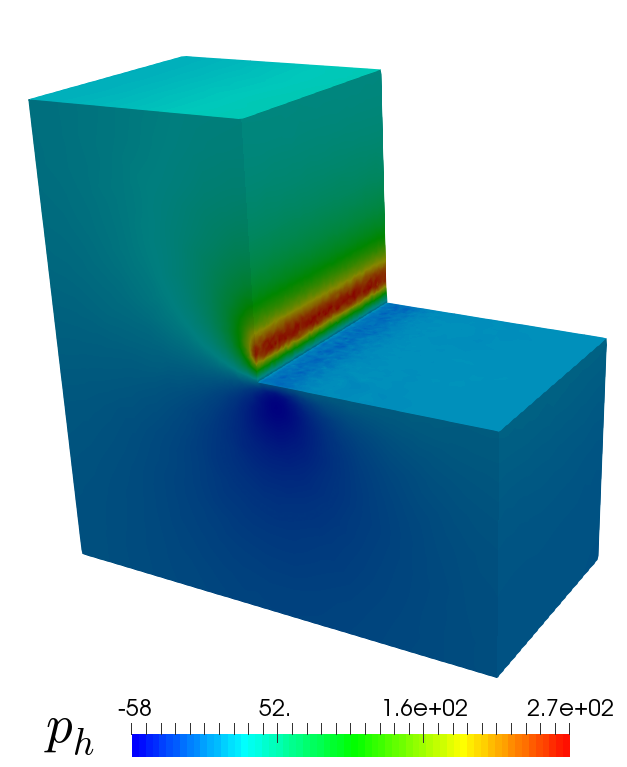} 

\vspace{-0.2cm}

\caption{[Example 3] Initial mesh, computed magnitude of the velocity, and pressure field.}
\label{figure:Ex3-k0-aprox}
\end{center}
\end{figure}

\begin{figure}
\begin{center}
\includegraphics[width=4.15cm]{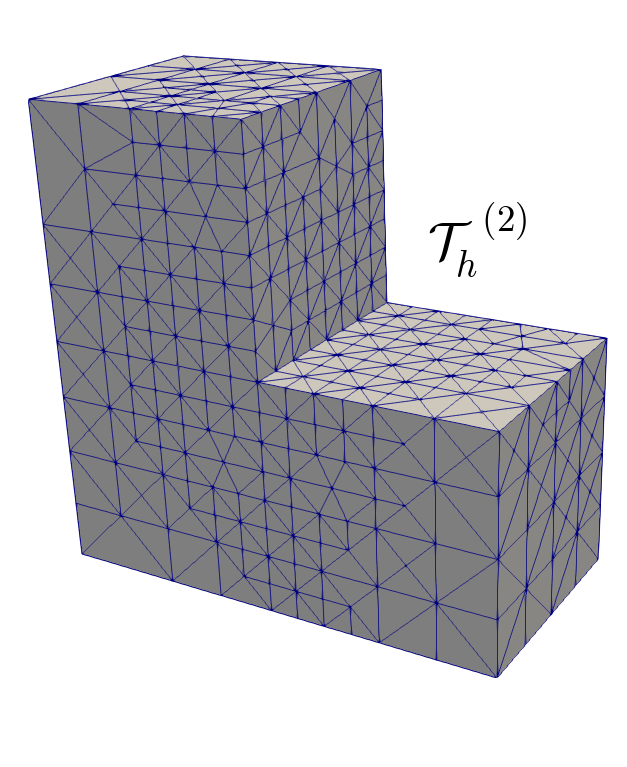} \qquad 
\includegraphics[width=4.15cm]{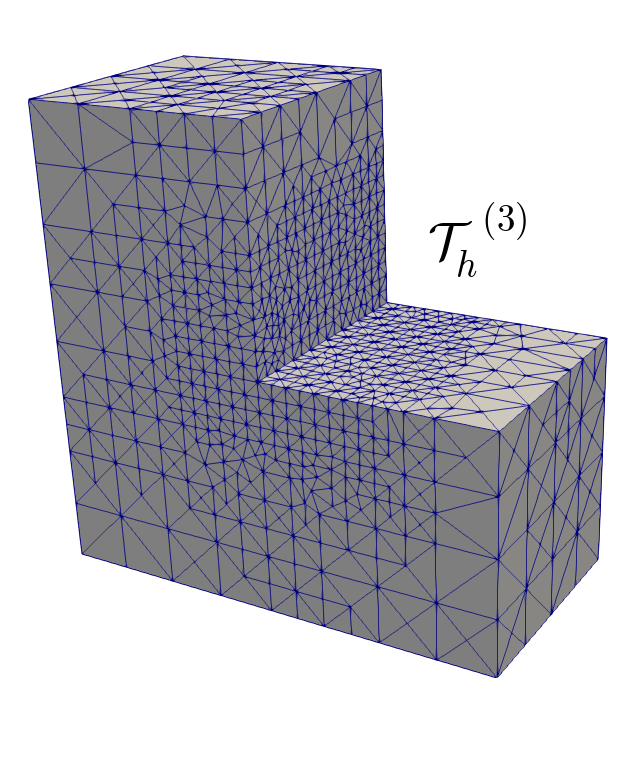} \qquad
\includegraphics[width=4.15cm]{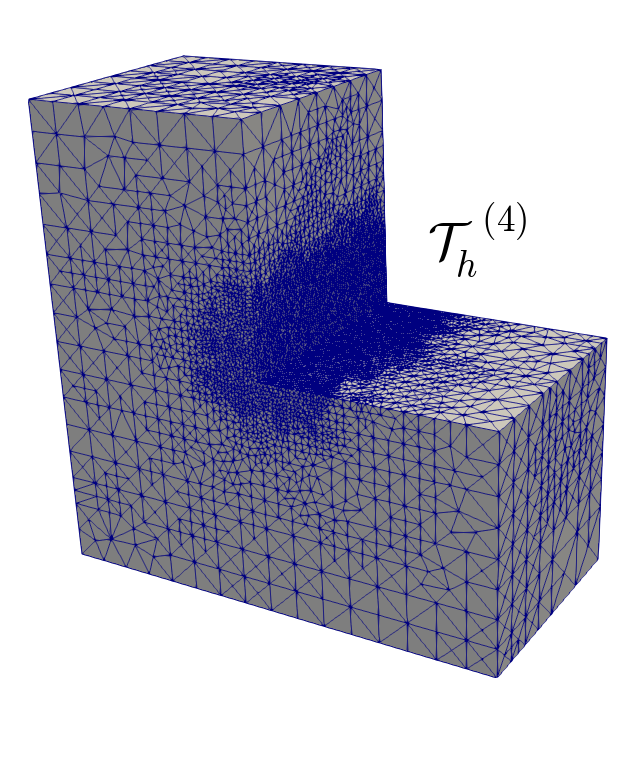}

\vspace{-0.6cm}
	
\caption{[Example 3] Three snapshots of adapted meshes according to the indicator $\Theta_1$ for $k = 0$.}\label{figure:Ex3-Th-k0}
\end{center}
\end{figure}
\begin{figure}
\begin{center}
\includegraphics[width=4.15cm]{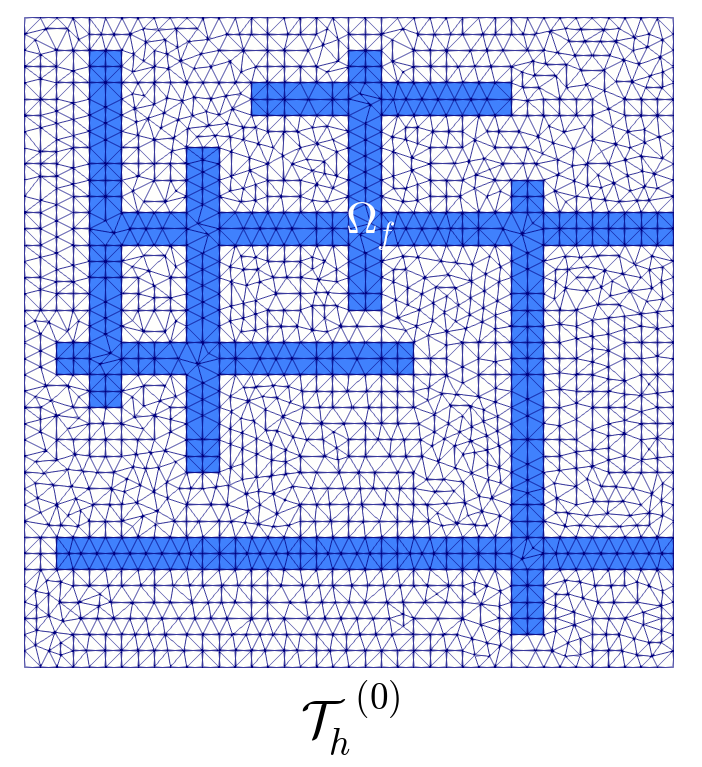}
\includegraphics[width=4.15cm]{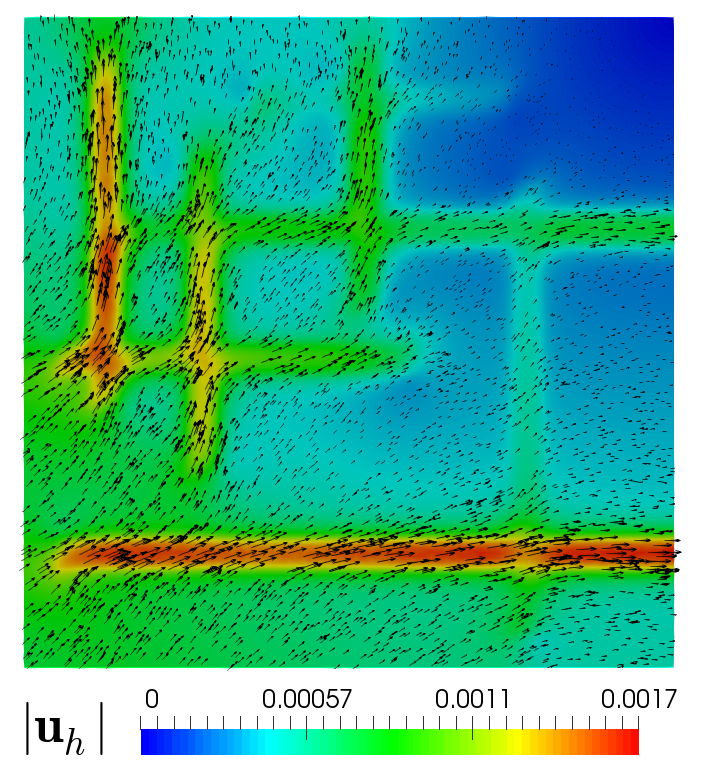} 
\includegraphics[width=4.15cm]{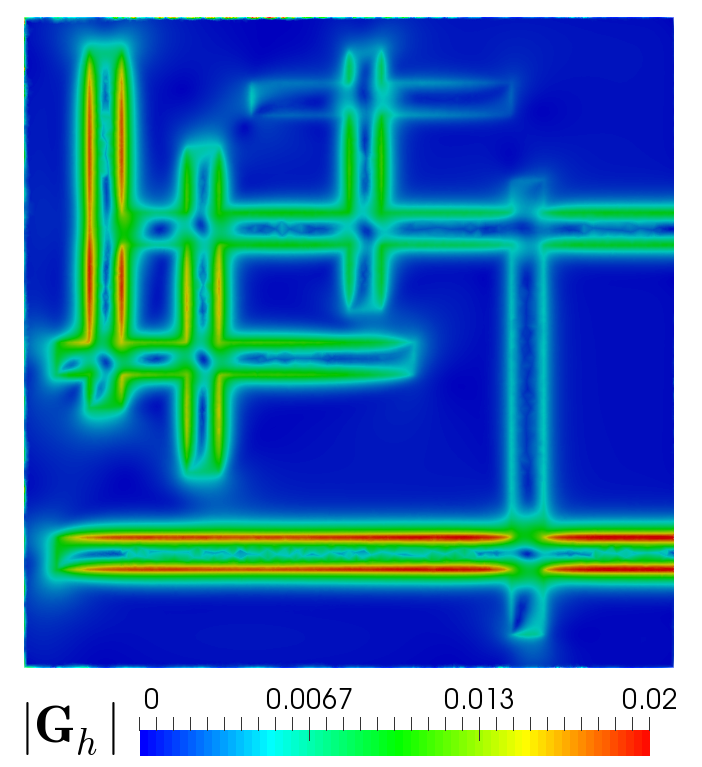}
\includegraphics[width=4.15cm]{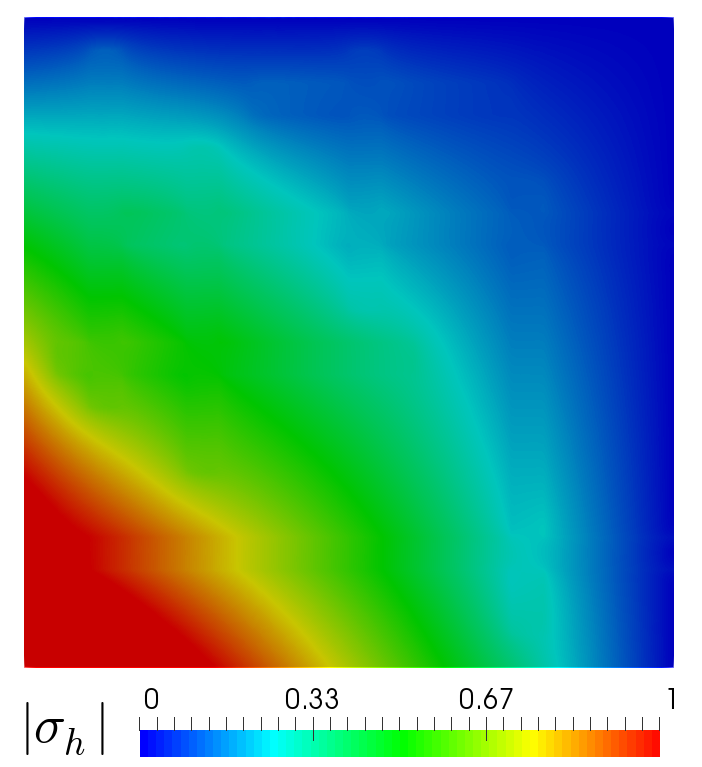} 
		
\caption{[Example 4] Initial mesh, computed magnitude of the velocity, velocity gradient tensor, and pseudostress tensor.}
\label{figure:Ex4-k0-aprox}
\end{center}
\end{figure}

\begin{figure}
\begin{center}
\includegraphics[width=4.15cm]{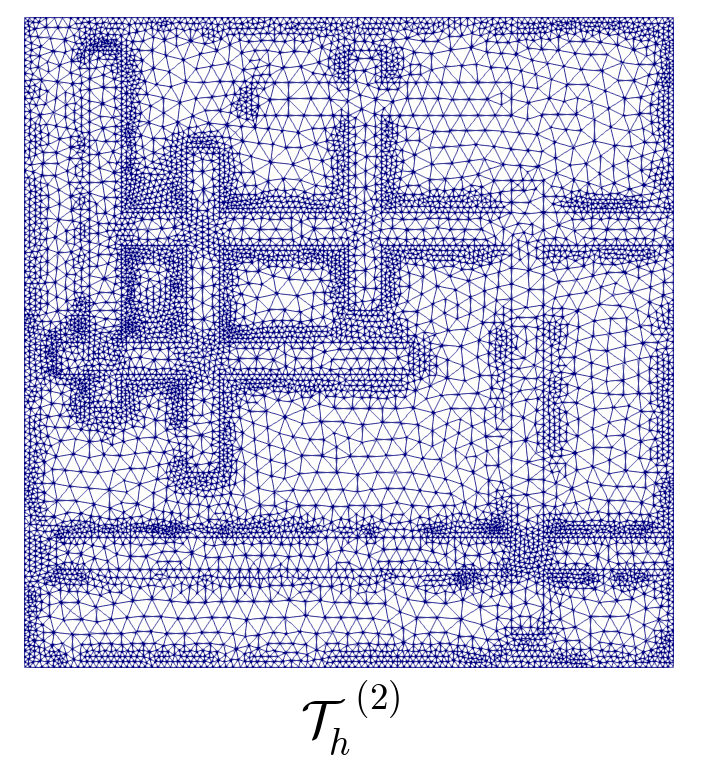}
\includegraphics[width=4.15cm]{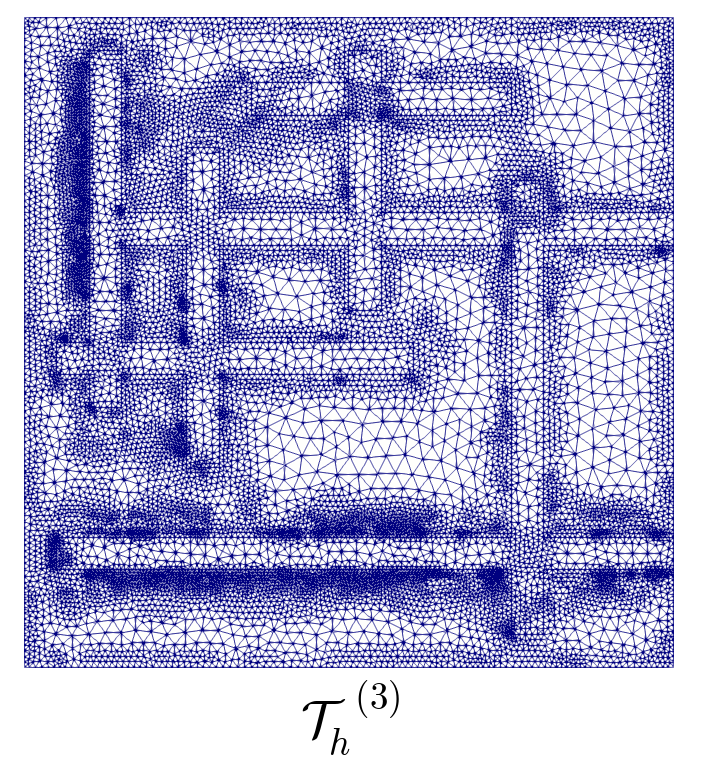}
\includegraphics[width=4.15cm]{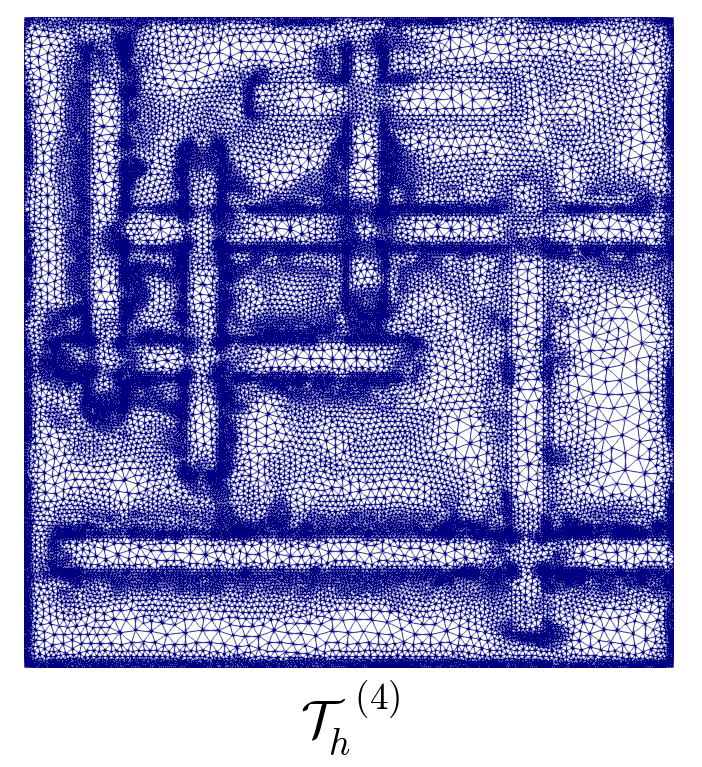}
\includegraphics[width=4.15cm]{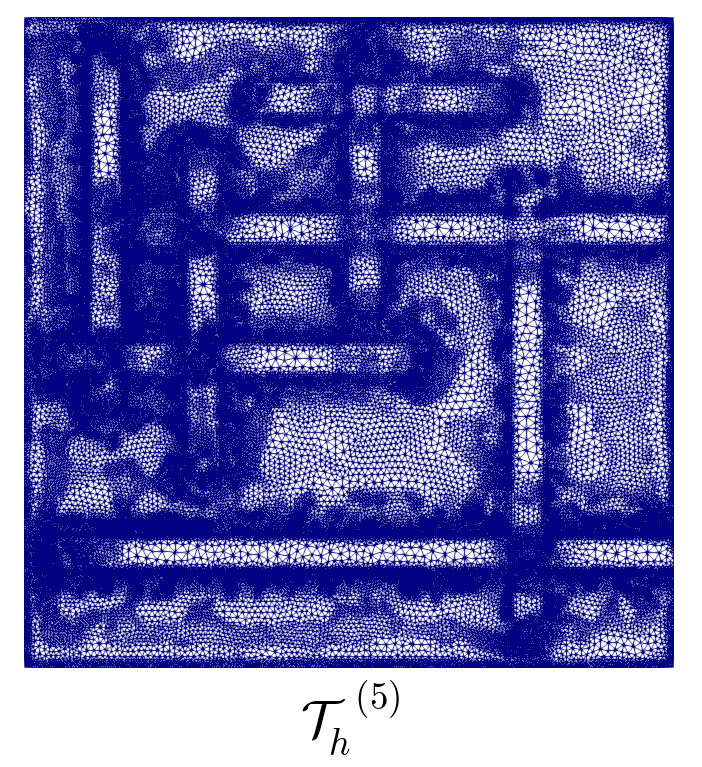}

\includegraphics[width=4.15cm]{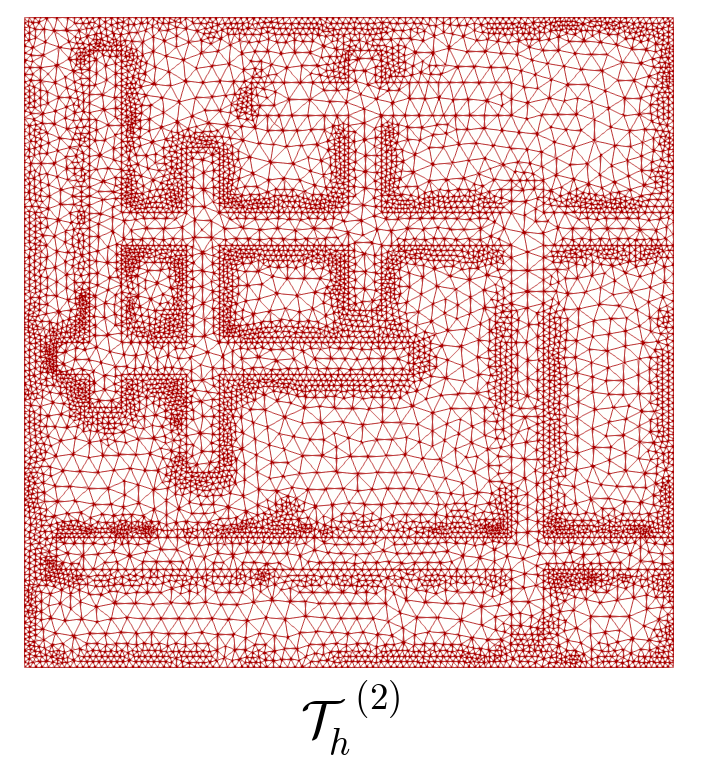}
\includegraphics[width=4.15cm]{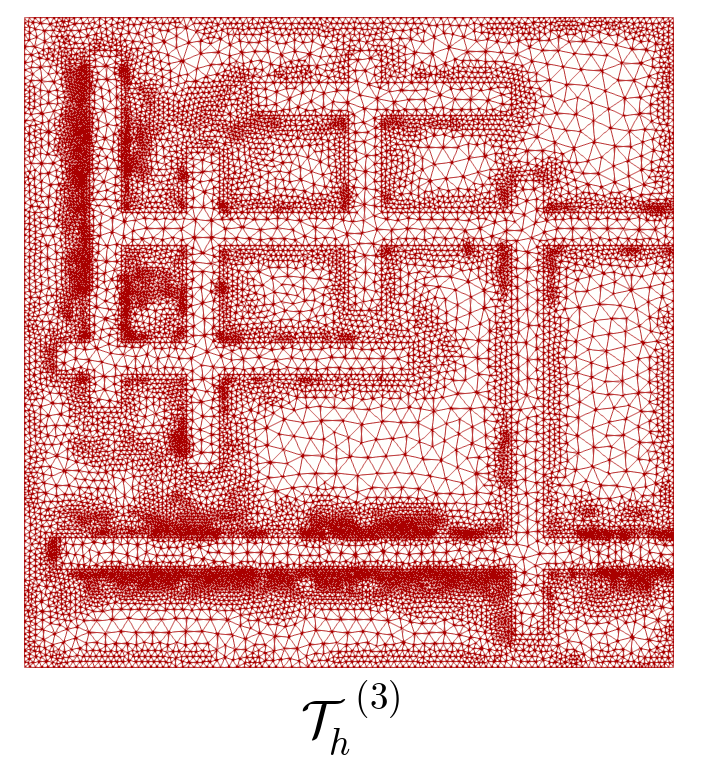}
\includegraphics[width=4.15cm]{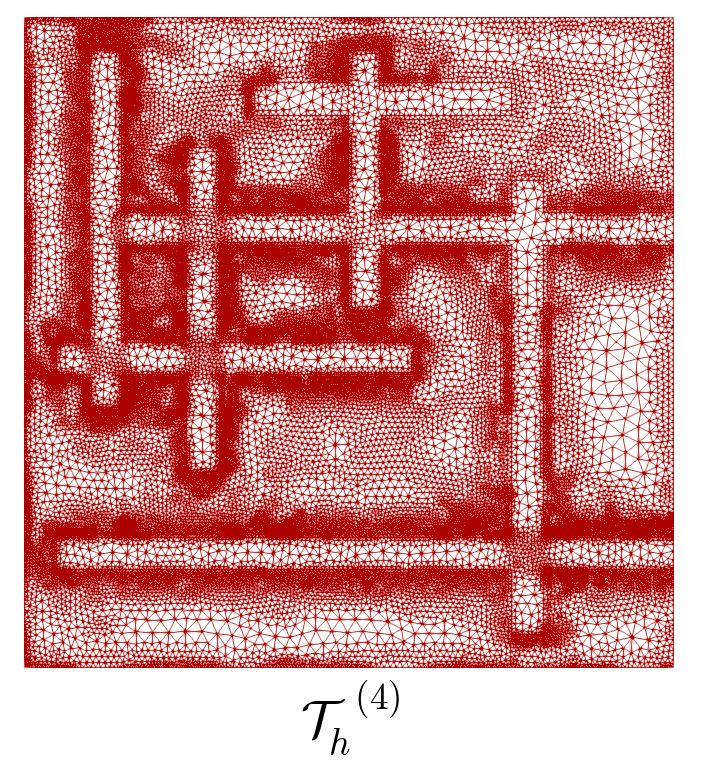}
\includegraphics[width=4.15cm]{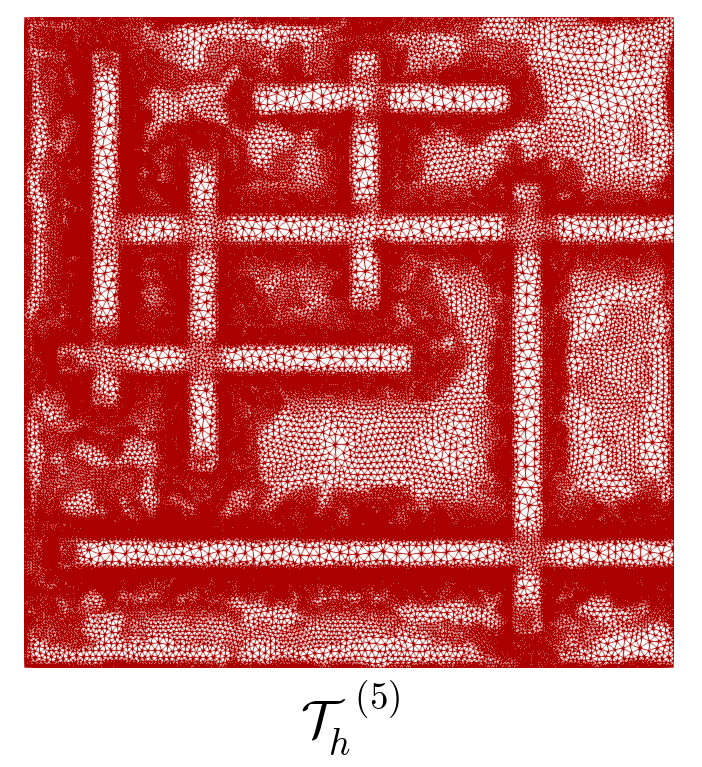}

\caption{[Example 4] Four snapshots of adapted meshes according to the indicators $\Theta_1$ and $\wh{\Theta}_2$ for $k = 1$ (top and bottom plots, respectively).}\label{figure:Ex4-Th-k0}
\end{center}
\end{figure}


\appendix

\section{Computing other variables of interest}\label{sec:appendix-A}

In this appendix we introduce suitable approximations for other variables of interest,
such as the pressure $p$, the velocity gradient $\bG:=\nabla\bu$,
the vorticity $\bomega:= \frac{1}{2}\left( \nabla\bu - (\nabla\bu)^\rt \right)$, and 
the shear stress tensor $\wt{\bsi} = \nu \left( \nabla\bu + (\nabla\bu)^\rt \right) - p\,\bI$, 
are all them written in terms of the solution of the discrete problem \eqref{eq:CBF-augmented-discrete-formulation-H0}.
In fact, using \eqref{eq:sigma-decomposition} and simple computations, we deduce that at the continuous level, there hold
\begin{equation}\label{eq:postprocess-vars}
\begin{array}{c}
\ds p = -\frac{1}{d} \tr(\bsi + \bu\otimes\bu) - \ell ,\quad
\bG = \frac{1}{\nu} \left( \bsi^\rd + (\bu\otimes \bu)^\rd \right) ,\quad
\bomega = \frac{1}{2 \nu} \left( \bsi - \bsi^\rt \right) ,\qan  \\[2ex]
\ds \wt{\bsi} = \bsi^\rd + (\bu\otimes \bu)^\rd + \bsi^\rt + (\bu\otimes\bu) + \ell\,\bbI ,\with
\ell = -\frac{1}{d\,|\Omega|} \int_\Omega \tr(\bu\otimes\bu) \,,
\end{array}
\end{equation}
provided the discrete solution $(\bsi_h,\bu_h)\in \bbH_h^\bsi\times \bH_h^\bu$ of problem \eqref{eq:CBF-augmented-discrete-formulation-H0}, we propose the following approximations for the aforementioned variables:
\begin{equation}\label{eq:postprocess-vars-app}
\begin{array}{c}
\ds p_h = -\frac{1}{d} \tr(\bsi_h + \bu_h\otimes\bu_h) - \ell_{h} ,\quad  
\bG_h = \frac{1}{\nu} \big(\bsi_h^\rd + (\bu_h\otimes\bu_h)^\rd \big) ,\quad
\bomega_h = \frac{1}{2 \nu} \big(\bsi_h - \bsi_h^\rt\big) ,\,\,\,\,\mbox{and}  \\[2ex]
\ds \wt{\bsi}_h = \bsi_h^\rd + (\bu_h\otimes\bu_h)^\rd + \bsi_h^\rt + (\bu_h\otimes\bu_h) + \ell_{h}\,\bbI\,, \with
\ell_{h} = -\frac{1}{d\,|\Omega|} \int_\Omega \tr(\bu_h\otimes\bu_h)\,.
\end{array}
\end{equation}
The following result, whose proof follows directly from Theorem \ref{thm:approximation}, 
establishes the corresponding approximation result for this post-processing procedure.
\begin{lem}\label{lem:rate-of-convergence-further-variables} 
Let $(\bsi,\bu)\in \bbH_0(\bdiv;\Omega)\times \bH^1(\Omega)$ be the unique solution of 
the continuous problem \eqref{eq:CBF-augmented-weak-formulation}, and 
let $p$, $\bG$, $\bomega$ and $\wt{\bsi}$ given by \eqref{eq:postprocess-vars}. 
In addition, let $p_h$, $\bG_h$, $\bomega_h$ 
and $\wt{\bsi}_h$ be the discrete counterparts introduced in \eqref{eq:postprocess-vars-app}.
Let $l\in (0,k+1]$ and assume that the hypotheses of the Theorem \ref{thm:approximation} be hold. 
Then, there exists $C>0$, independent of $h$, such that
\begin{equation}
\|p - p_h\|_{0,\Omega} + \|\bG - \bG_h\|_{0,\Omega} 
+ \|\bomega - \bomega_h\|_{0,\Omega} + \|\wt{\bsi} - \wt{\bsi}_h\|_{0,\Omega} 
\,\leq\, C\,h^l\,\Big\{ \|\bsi\|_{l,\Omega} + \|\bdiv(\bsi)\|_{l,\Omega} + \|\bu\|_{l+1,\Omega} \Big\} \,.
\end{equation}
\end{lem}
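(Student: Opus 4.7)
The plan is to express each of the four post-processing differences as a linear combination of the primary errors $\bsi - \bsi_h$, the quadratic discrepancy $\bu\otimes\bu - \bu_h\otimes\bu_h$, and the scalar $\ell - \ell_h$, and then to control each piece by $\|(\bsi,\bu) - (\bsi_h,\bu_h)\|$ so that Theorem \ref{thm:approximation} closes the argument. Concretely, I would subtract \eqref{eq:postprocess-vars-app} from \eqref{eq:postprocess-vars} term by term; a direct computation yields identities of the form
\[
\bomega - \bomega_h \,=\, \frac{1}{2\,\nu}\Big( (\bsi - \bsi_h) - (\bsi - \bsi_h)^\rt \Big), \qquad
\bG - \bG_h \,=\, \frac{1}{\nu}\Big( (\bsi - \bsi_h)^\rd + (\bu\otimes\bu - \bu_h\otimes\bu_h)^\rd \Big),
\]
together with analogous expressions for $p - p_h$ and $\wt{\bsi} - \wt{\bsi}_h$, which in addition incorporate the term $\ell - \ell_h$. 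Taking $\bL^2$ norms and using the trivial inequalities $\|\btau^\rd\|_{0,\Omega}, \|\btau^\rt\|_{0,\Omega} \leq \|\btau\|_{0,\Omega}$, every one of the four differences is controlled by a constant multiple of $\|\bsi - \bsi_h\|_{0,\Omega} + \|\bu\otimes\bu - \bu_h\otimes\bu_h\|_{0,\Omega} + |\ell - \ell_h|$.

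Next, I would dispose of the two auxiliary quantities on the right. The scalar $\ell - \ell_h$ is handled immediately by Cauchy--Schwarz applied to its defining integral, giving $|\ell - \ell_h| \,\leq\, C\,\|\bu\otimes\bu - \bu_h\otimes\bu_h\|_{0,\Omega}$. The only genuine work is the control of the nonlinear convective discrepancy, and this is precisely the content of the estimate \eqref{eq:eff-convective-term} already established in the efficiency analysis of Section \ref{sec:efficiency-Theta-1}, namely
\[
\|\bu\otimes\bu - \bu_h\otimes\bu_h\|_{0,\Omega} \,\leq\, \wt{C}\,\|\bu - \bu_h\|_{1,\Omega},
\]
whose proof rests on H\"older's inequality, the continuous injection $\bi_4 : \bH^1(\Omega) \to \bL^4(\Omega)$, and the uniform bounds $\bu \in \bW_r$ and $\bu_h \in \wt{\bW}_r$ granted by Theorems \ref{thm:fixed-point-T} and \ref{th:contractive}.

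Combining the identities of the first paragraph with the two bounds of the second, I would obtain
\[
\|p - p_h\|_{0,\Omega} + \|\bG - \bG_h\|_{0,\Omega} + \|\bomega - \bomega_h\|_{0,\Omega} + \|\wt{\bsi} - \wt{\bsi}_h\|_{0,\Omega} \,\leq\, C\,\|(\bsi,\bu) - (\bsi_h,\bu_h)\|,
\]
with a constant $C$ independent of $h$. Finally, a direct application of Theorem \ref{thm:approximation} to the right-hand side delivers the announced bound by $C\,h^l\{\|\bsi\|_{l,\Omega} + \|\bdiv(\bsi)\|_{l,\Omega} + \|\bu\|_{l+1,\Omega}\}$. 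I do not expect any substantial obstacle: the potentially delicate nonlinear estimate has already been carried out during the efficiency proof, and the remaining algebra is essentially bookkeeping of the post-processing identities combined with the C\'ea estimate for the primary unknowns.
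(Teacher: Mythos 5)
Your proposal is correct and follows essentially the same route as the paper: subtract the post-processing formulas, control the quadratic discrepancy $\bu\otimes\bu-\bu_h\otimes\bu_h$ (and hence $\ell-\ell_h$) by adding and subtracting a cross term and using H\"older's inequality together with the uniform bounds $\bu\in\bW_r$, $\bu_h\in\wt{\bW}_r$, and then invoke Theorem \ref{thm:approximation}. The only cosmetic difference is that you cite the already-established estimate \eqref{eq:eff-convective-term} for the convective discrepancy instead of redoing the add-and-subtract argument inline, which is perfectly legitimate.
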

\begin{proof}
First, from \eqref{eq:postprocess-vars} and \eqref{eq:postprocess-vars-app}, adding and subtracting
$\bu\otimes\bu_h$ (also work with $\bu_h\otimes \bu$), employing the triangle and H\"older inequalities,
it is not difficult to see that that there exists a $C>0$, depending only on data and other constants,
all of them independent of $h$, such that
\begin{equation}
\begin{array}{l}
\ds \|p - p_h\|_{0,\Omega} + \|\bG - \bG_h\|_{0,\Omega} 
+ \|\bomega - \bomega_h\|_{0,\Omega} + \|\wt{\bsi} - \wt{\bsi}_h\|_{0,\Omega} \\[1ex]
\ds\quad \leq\, C\,\Big\{ \|\bsi -\bsi_h\|_{\bdiv;\Omega} + \big( \|\bu\|_{1,\Omega} + \|\bu_h\|_{1,\Omega} \big)\|\bu - \bu_h\|_{1,\Omega} \Big\}\,.
\end{array}
\end{equation}
Then, using the fact that $\bu\in \bW_r$ and $\bu_h\in \wt{\bW}_r$, the result follows from a direct application of Theorem \ref{thm:approximation}.
\end{proof}


\section{Preliminaries for the \textit{a posteriori} error analysis}\label{sec:appendix-B}

We start by introducing a few useful notations for describing local information on elements
and edges or faces depending on wether $d=2$ or $d=3$, respectively.
Let $\cE_h$ be the set of edges or faces of $\cT_h$, whose corresponding diameters are denoted 
by $h_e$, and define
\begin{equation*}
\cE_h(\Omega) \,:=\, \big\{ e \in \cE_h :\quad e\subseteq \Omega \big\} \qan 
\cE_h(\Gamma) \,:=\, \big\{ e \in \cE_h :\quad e\subseteq \Gamma \big\}\,.
\end{equation*}
For each $T\in \cT_h$, we let $\cE_{h,T}$ be the set of edges or faces of $T$, and denote
\begin{equation*}
\cE_{h,T}(\Omega) \,=\, \big\{ e \in \partial T :\quad e\subseteq \cE_h(\Omega) \big\} \qan 
\cE_{h,T}(\Gamma) \,=\, \big\{ e \in \partial T :\quad e\subseteq \cE_h(\Gamma) \big\} \,.
\end{equation*}
We also define the unit normal vector $\bn_e$ on each edge or face by
\begin{equation*}
\bn_e \,:=\, (n_1, \dots, n_d)^\rt \quad \forall\,e\in \cE_h \,.
\end{equation*}
Hence, when $d=2$ we can define the tangential vector $\bs_e$ by
\begin{equation*}
\bs_e \,:=\, (-n_2, n_1 )^\rt \quad \forall\,e\in \cE_h \,.
\end{equation*}
However, when no confusion arises, we will simply write $\bn$ and $\bs$ 
instead of $\bn_e$ and $\bs_e$, respectively.

The usual jump operator $\jump{\cdot}$ across internal edges or faces is defined 
for piecewise continuous matrix, vector, or scalar-valued functions $\bzeta$, by
\begin{equation*}
\jump{\bzeta} \,=\, (\bzeta|_{T_+})|_e - (\bzeta|_{T_-})|_e \textup{ with } e \,=\, \partial T_+ \cap \partial T_- \,,
\end{equation*}
where $T_+$ and $T_-$ are the elements of $\cT_h$ having $e$ as a common edge or face. 
Finally, for sufficiently smooth scalar $\psi$, vector $\bv := (v_1, \dots , v_d)^\rt$, 
and tensor fields $\btau := (\tau_{i j})_{i,j=1,d}$, we let
\[
\ubgamma_{*}(\btau)\,=\,\left\lbrace 
\begin{array}{cl}
\quad\btau\bs &,\, \text{ for } d=2 \,, \\[0.5ex]
\begin{pmatrix}
(\btau^\rt_1\times\bn)^\rt \\ (\btau^\rt_2\times\bn)^\rt \\ (\btau^\rt_3\times\bn)^\rt
\end{pmatrix} &,\, \text{ for } d=3 \,,
\end{array}\right.,\quad
\bcurl(\bv) \,:=\, \left(
\begin{array}{cc}
-\dfrac{\partial\,v_1}{\partial x_2} & \dfrac{\partial\,v_1}{\partial x_1} \\[2ex]  
-\dfrac{\partial\,v_2}{\partial x_2} & \dfrac{\partial\,v_2}{\partial x_1} 
\end{array}
\right)\,
\quad \text{for } d=2 \,,
\]
\[	
\ds\ucurl(\bv)\,:=\,\left\lbrace
\begin{array}{cl}
\ds\frac{\partial v_2}{\partial x_1} - \frac{\partial v_1}{\partial x_2}&,\, \text{ for } d=2 \,, \\[2ex]
\nabla\times\bv&,\, \text{ for } d=3 \,, 
\end{array} \right.\qquad
\ubcurl(\btau)\,=\,\left\lbrace 
\begin{array}{ll}
\begin{pmatrix}
\ucurl(\btau^\rt_1) \\ \ucurl(\btau^\rt_2)
\end{pmatrix} &,\, \text{ for } d=2 \,, \\[2.5ex]
\begin{pmatrix}
\ucurl(\btau^\rt_1)^\rt \\ \ucurl(\btau^\rt_2)^\rt \\ \ucurl(\btau^\rt_3)^\rt
\end{pmatrix} &,\, \text{ for } d=3 \,, 
\end{array}\right.
\]
where $\btau_i$ is the $i$-th row of $\btau$ and the derivatives involved 
are taken in the  distributional sense. 

Now, let $\bI_h:\bH^1(\Omega)\to \bH^1_h(\Omega)$ be the vector version of the usual Cl\'ement interpolation operator (cf. \cite{clement1975}), where
\begin{equation*}
\bH^1_h(\Omega) := 
\Big\{ \bv_h\in \bC(\ov{\Omega}) : \quad \bv_h|_T\in \bP_1(T) \quad \forall \, T\in\cT_h \Big\}\,,
\end{equation*}
and let $\bPi^k_h:\bbH^1(\Omega)\to\bbH^\bsi_h$ (cf. \eqref{eq:subspaces-a}) be the Raviart--Thomas interpolator, which, according to its characterization properties (see, e.g., \cite[Section 3.4.1]{Gatica}), verifies 
\begin{equation*}
\bdiv(\bPi^k_h(\btau)) = \cP^k_h(\bdiv(\btau)) \quad \forall\,\btau\in \bbH^1(\Omega) \,,
\end{equation*}
where $\cP^k_h$ is the vectorial version of the $\L^2(\Omega)$-orthogonal projector onto the picewise polynomials of degree $\leq k$ on $\Omega$. 
Further approximation properties of $\bI_h$ and $\bPi^k_h$ are summarized in the following lemmas
(see a proof in e.g. \cite{clement1975} and \cite[Lemma 3.16 and 3.18]{Gatica}, respectively).
\begin{lem}\label{lem:clement} 
There exist $c_1, c_2>0$, independent of $h$, 
such that for all $\bv\in \bH^1(\Omega)$ there hold
\begin{equation*}
\|\bv - \bI_h(\bv)\|_{0,T} \,\leq\, c_1\, h_T\,\|\bv\|_{1,\Delta(T)} \quad \forall\, T\in\cT_h,
\end{equation*}
and
\begin{equation*}
\|\bv - \bI_h(\bv)\|_{0,e} \,\leq\, c_2\,h^{1/2}_e\,\|\bv\|_{1,\Delta(e)} \quad \forall\, e\in\cE_h,
\end{equation*}
where 
$\Delta(T) := \cup \Big\{T'\in\cT_h: \,\, T'\cap T\neq \emptyset \Big\}$ and 
$\Delta(e) := \cup \Big\{T'\in\cT_h: \,\, T'\cap e \neq \emptyset \Big\}$.
\end{lem}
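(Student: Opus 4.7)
The plan is to follow the classical Clément construction. First, I would introduce the set $\mathcal{V}_h$ of vertices of $\cT_h$ and, for each $z\in\mathcal{V}_h$, the patch $\omega_z := \cup\{T\in\cT_h : z\in\overline{T}\}$. By shape regularity, only a uniformly bounded number of elements meet at each vertex, and $\mathrm{diam}(\omega_z)$ is comparable to $h_T$ for every $T\subseteq \omega_z$. For each $z$ I define the local $\bL^2(\omega_z)$-orthogonal projection $\pi_z:\bL^2(\omega_z)\to \bP_0(\omega_z)$ onto (vector-valued) constants, and set
\begin{equation*}
\bI_h(\bv) \,:=\, \sum_{z\in\mathcal{V}_h} \pi_z(\bv)\,\phi_z,
\end{equation*}
where $\{\phi_z\}_{z\in\mathcal{V}_h}$ is the standard nodal basis of the continuous piecewise linear space, so that the partition of unity $\sum_{z\in\overline{T}} \phi_z \equiv 1$ holds on every $T\in\cT_h$.

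Next, I would establish the local stability and approximation estimate on each patch via a standard Bramble--Hilbert/Deny--Lions argument: mapping $\omega_z$ to one of finitely many reference patches by an affine transformation with Jacobian of order $h_z^d$, the fact that $\pi_z$ preserves constants yields
\begin{equation*}
\|\bv - \pi_z(\bv)\|_{0,\omega_z} \,\leq\, C\,h_z\,|\bv|_{1,\omega_z}
\qan
|\pi_z(\bv)|_{0,\infty;\omega_z} \,\leq\, C\,h_z^{-d/2}\|\bv\|_{0,\omega_z}.
\end{equation*}
With this in hand, the volume bound follows by writing $(\bv - \bI_h(\bv))|_T = \sum_{z\in \overline{T}}(\bv - \pi_z(\bv))\,\phi_z$, applying the triangle inequality together with $\|\phi_z\|_{\infty} \leq 1$ and the local approximation on each of the finitely many patches $\omega_z\subseteq \Delta(T)$, to obtain $\|\bv - \bI_h(\bv)\|_{0,T} \leq c_1\,h_T\,\|\bv\|_{1,\Delta(T)}$.

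For the edge/face estimate, I would combine the volume bound with the analogous $\bH^1$-stability $|\bv - \bI_h(\bv)|_{1,T} \leq C\,\|\bv\|_{1,\Delta(T)}$ (proved by the same scaling argument applied to $\nabla\phi_z$, whose $\bL^\infty$ norm scales as $h_T^{-1}$) and the scaled trace inequality
\begin{equation*}
\|w\|_{0,e}^2 \,\leq\, C\,\bigl( h_e^{-1}\|w\|_{0,T}^2 + h_e\,|w|_{1,T}^2 \bigr)
\qquad \forall\,w\in \bH^1(T), \quad e\subset \partial T,
\end{equation*}
applied to $w = \bv - \bI_h(\bv)$. The two contributions balance and yield $\|\bv - \bI_h(\bv)\|_{0,e} \leq c_2\,h_e^{1/2}\,\|\bv\|_{1,\Delta(e)}$.

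The main technical obstacle is the rigorous scaling step: the patches $\omega_z$ vary in shape from vertex to vertex, so one must invoke the shape-regularity of $\{\cT_h\}_{h>0}$ to reduce to a finite family of reference configurations whose affine images cover all patches with uniformly bounded Jacobian and inverse Jacobian. Once this is granted, the remaining arguments are a routine combination of the Deny--Lions lemma, the partition-of-unity decomposition, and the trace inequality, and the complete proof is the classical one in \cite{clement1975}, to which I would refer for the full details.
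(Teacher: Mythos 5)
The paper offers no proof of this lemma at all: it is quoted as a known result with a pointer to \cite{clement1975} and \cite[Lemma 3.16]{Gatica}, and your sketch is precisely the classical argument from those references (local $\L^2$ projections at the vertices, partition of unity, Deny--Lions scaling on reference patches, and a scaled trace inequality for the edge bound), so it is correct and fully consistent with what the paper relies on. One minor remark: running the trace inequality on the whole element $T$ produces the bound with $\|\bv\|_{1,\Delta(T)}$ rather than the slightly sharper $\|\bv\|_{1,\Delta(e)}$ stated in the lemma; to recover the latter one decomposes $\bv-\bI_h(\bv)$ on $e$ using only the nodal functions of the vertices of $e$, whose patches lie in $\Delta(e)$ --- though for the {\it a posteriori} analysis in this paper either form suffices.
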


\begin{lem}\label{lem:Phih-properties}
There exist $C_1, C_2 >0$, independent of $h$, such that for all $\btau\in\bbH^1(\Omega)$ there hold
\begin{equation*}
\|\btau - \bPi^k_h(\btau)\|_{0,T} \,\leq\, C_1\,h_T\,\|\btau\|_{1,T} \quad \forall\, T\in\cT_h\,,
\end{equation*}
and
\begin{equation*}
\|\btau\bn - \bPi^k_h(\btau)\bn\|_{0,e} \,\leq\, C_2\,h^{1/2}_e\,\|\btau\|_{1,T_e} \quad \forall\, e\in\cE_h,
\end{equation*}
where $T_e$ is a triangle of $\cT_h$ containing the edge $e$ on its boundary.
\end{lem}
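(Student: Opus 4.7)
The plan is to prove both bounds by the standard strategy of transferring to a reference element, applying a Bramble--Hilbert argument, and scaling back. Throughout, one exploits two key facts about $\bPi_h^k$ acting row-wise: (i) the interpolator reproduces polynomials in $\bRT_k(T)$, hence in particular all constants, and (ii) since $\bPi_h^k$ is defined through face moments against $\rP_k(e)$ and interior moments against $\bP_{k-1}(T)$, the operator is well-defined and bounded on $\bbH^1(T)$, because the normal traces on faces belong to $\rL^2(e)$ whenever $\btau\in\bbH^1(T)$ (this is what the hypothesis $\btau\in\bbH^1(\Omega)$ is there to guarantee).

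For the first estimate, let $F_T:\wh{T}\to T$ be the affine map from the reference simplex $\wh T$ onto $T$, and transform $\btau$ to $\wh{\btau}$ on $\wh T$ via the Piola transform (applied row-wise, so that normal traces are preserved up to Jacobian factors). The commuting property of the Piola transform with the Raviart--Thomas interpolation gives $\widehat{\bPi_h^k\btau}=\wh{\bPi}^k\wh\btau$. Boundedness of $\wh{\bPi}^k$ on $\bbH^1(\wh T)$ together with its invariance on constants yields, via the Deny--Lions/Bramble--Hilbert lemma,
\begin{equation*}
\|\wh\btau-\wh{\bPi}^k\wh\btau\|_{0,\wh T}\;\le\;C\,|\wh\btau|_{1,\wh T}.
\end{equation*}
Standard scaling for the Piola transform of a tensor field on a regular element of diameter $h_T$ shows that $\|\btau-\bPi_h^k\btau\|_{0,T}$ and $h_T|\btau|_{1,T}$ correspond, up to constants depending only on shape-regularity, to $\|\wh\btau-\wh{\bPi}^k\wh\btau\|_{0,\wh T}$ and $|\wh\btau|_{1,\wh T}$, respectively. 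Combining both ingredients delivers the first inequality.

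For the face estimate, I would proceed the same way, but insert a trace inequality on the reference configuration. Given a face $e\in\cE_{h,T}$ with corresponding face $\wh e\subset\partial\wh T$, the continuous trace mapping $\bbH^1(\wh T)\to\bbL^2(\wh e)$ combined with the reference identity $\wh{\bPi}^k\wh\btau\,\wh\bn=\wh\bPi^k(\wh\btau\wh\bn)$ at the level of normal components provides
\begin{equation*}
\|(\wh\btau-\wh{\bPi}^k\wh\btau)\wh\bn\|_{0,\wh e}\;\le\;C\,\|\wh\btau-\wh{\bPi}^k\wh\btau\|_{1,\wh T}\;\le\;C\,|\wh\btau|_{1,\wh T},
\end{equation*}
the last step being again Bramble--Hilbert, using that $I-\wh{\bPi}^k$ kills constants. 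Finally, scaling the $\rL^2(e)$-norm (pick up a factor $h_e^{1/2}$ from the surface measure) and the $\bbH^1(T)$-seminorm yields the stated bound with an extra $h_e^{1/2}\sim h_T^{1/2}$ on the right-hand side. The main technical point to watch is the precise behaviour of the Piola transform on normal traces of tensors (together with the row-wise action of $\bPi_h^k$): the Jacobian and determinant factors must conspire so that after combining with the $h_e$ measure change one ends with exactly $h_e^{1/2}\,\|\btau\|_{1,T_e}$, which it does thanks to shape-regularity. All other pieces are routine reference-element computations.
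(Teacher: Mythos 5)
Your argument is correct and is essentially the standard proof of these estimates; the paper itself does not prove the lemma but simply cites \cite[Lemmas 3.16 and 3.18]{Gatica}, whose proofs follow exactly the route you describe (row-wise Piola transform, commutativity with the Raviart--Thomas degrees of freedom, boundedness of the reference interpolant on $\bbH^1(\wh T)$, Deny--Lions/Bramble--Hilbert, and scaling with a reference trace inequality for the face estimate). No gaps to report.
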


We end this appendix by recalling a stable Helmholtz decompositions for $\bbH(\bdiv;\Omega)$. 
More precisely, we have the following lemma.
\begin{lem}\label{lem:helmholtz-decomposition}
For each $\btau \in \bbH(\bdiv;\Omega)$ there exist
\begin{itemize}
\item[a)] $\bz\in \bH^{2}(\Omega)$ and  $\bchi\in \bH^1(\Omega)$ such that $\btau = \nabla\bz + \bcurl(\bchi)$ when $d=2$,
		
\item[b)] $\bz\in \bH^{2}(\Omega)$ and  $\bchi\in \bbH^1(\Omega)$ such that $\btau = \nabla\bz + \ubcurl(\bchi)$ when $d=3$.
\end{itemize}
In addition, in both cases, 
\begin{equation*}
\|\bz\|_{2;\Omega} + \|\bchi\|_{1,\Omega} \,\leq\, C_{\tt Hel}\,\|\btau\|_{\bdiv; \Omega}, 
\end{equation*}	
where $C_{\tt Hel}$ is a positive constant independent of all the foregoing variables.
\end{lem}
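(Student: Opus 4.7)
My plan is to prove both parts of the lemma by working row-by-row on the tensor field $\btau$. Fix a bounded convex polygonal/polyhedral domain $B$ with $\ov{\Omega}\subset B$. For each row $\btau_i$ of $\btau\in \bbH(\bdiv;\Omega)$, I first extend $\div(\btau_i)\in L^2(\Omega)$ to some $g_i\in L^2(B)$ in a bounded way, then solve the Dirichlet problem: find $z_i\in H^1_0(B)$ with $\Delta z_i=g_i$ in $B$. Since $B$ is convex, standard elliptic regularity yields $z_i\in H^2(B)$ together with the bound $\|z_i\|_{2,B}\le C\|g_i\|_{0,B}\le C\|\div(\btau_i)\|_{0,\Omega}$. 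Collecting these scalar components into $\bz:=(z_1,\dots,z_d)^\rt$ and restricting to $\Omega$ furnishes $\bz\in \bH^2(\Omega)$ and isolates $\nabla\bz$ as the gradient piece of the decomposition.

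Next, I would analyze the residual $\bphi:=\btau-\nabla\bz$, whose rows $\bphi_i=\btau_i-\nabla z_i$ satisfy $\div(\bphi_i)=0$ in $\Omega$. The de Rham/Poincar\'e lemma on the Lipschitz (and, if necessary, simply connected) domain $\Omega$ then allows each $\bphi_i$ to be represented as a curl: in 2D there is a scalar $\chi_i\in H^1(\Omega)$ with $\bphi_i=\curl(\chi_i)$, and assembling $\bchi:=(\chi_1,\ldots,\chi_d)^\rt$ gives $\bphi=\bcurl(\bchi)$; in 3D there is a vector $\bchi_i\in\bH^1(\Omega)$ with $\bphi_i=\ucurl(\bchi_i)$, and stacking the $\bchi_i$'s as rows of a tensor $\bchi\in\bbH^1(\Omega)$ yields $\bphi=\ubcurl(\bchi)$. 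The claimed decomposition follows in both cases.

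The main obstacle is the $\bH^1$-boundedness of the potential, particularly in 3D, which does not follow from the abstract de Rham lemma alone. A robust way to secure it is to perform the construction first on the enclosing convex domain $B$: extend $\bphi_i$ to a divergence-free $\bL^2(B)$-field (for instance, by adjusting the choice of $z_i$ on $B\setminus\Omega$ so that the extension is continuous across $\partial\Omega$), then use the standard continuous right inverse of $\ucurl$ on divergence-free fields of a convex domain (see, e.g., \cite[Chapter I, Theorem 3.4]{Girault-Raviart}), and finally restrict to $\Omega$. The stability bound $\|\bz\|_{2,\Omega}+\|\bchi\|_{1,\Omega}\le C_{\tt Hel}\|\btau\|_{\bdiv;\Omega}$ then combines the elliptic regularity estimate for $\bz$ with the continuity of this right inverse and the trivial control $\|\bphi\|_{0,\Omega}\le\|\btau\|_{0,\Omega}+\|\nabla\bz\|_{0,\Omega}\le C\,\|\btau\|_{\bdiv;\Omega}$, from which the constant $C_{\tt Hel}$ is seen to be independent of $\btau$.
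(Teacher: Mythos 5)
Your overall strategy---enclose $\Omega$ in a convex domain $B$, generate the gradient part from a Poisson problem with convex-domain $H^2$ regularity, and obtain the rotational part from a continuous right inverse of the curl on $B$---is exactly the strategy behind the results the paper invokes; note that the paper itself gives no proof but simply cites \cite[Lemma 3.7]{grt2016} for $d=2$ and \cite[Theorem 3.1]{g2020} for $d=3$. However, your execution has a genuine gap in the order of operations. You first solve $\Delta z_i=g_i$ on $B$ with $g_i$ the zero extension of $\div(\btau_i)$, and only afterwards try to extend the residual $\bphi_i=\btau_i-\nabla z_i$ to a divergence-free field on $B$. Such an extension does not exist in general: if $H$ is a bounded connected component of $\R^d\setminus\ov{\Omega}$ (a hole of $\Omega$), the divergence theorem applied on $H$ to any divergence-free extension would force $\int_{\partial H}\bphi_i\cdot\bn=0$, and this flux condition is not implied by $\div(\bphi_i)=0$ in $\Omega$. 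Concretely, for $\Omega=\{x\in\R^2:\ 1<|x|<2\}$ and the row $\btau_i=\nabla(\log|x|)$ one gets $g_i=0$, hence $z_i=0$ and $\bphi_i=x/|x|^2$, whose flux through the inner circle equals $2\pi$; this field admits no divergence-free extension to the disk and is not a $\curl$ in the annulus (the decomposition itself still exists, with $\log|x|\in H^2$ carrying the whole field, but your construction does not find it). ``Adjusting $z_i$ on $B\setminus\Omega$'' cannot repair this, because matching the normal component of $\bphi_i$ across $\partial\Omega$ amounts to a Neumann problem in $B\setminus\ov{\Omega}$ whose compatibility condition on each bounded component is precisely the vanishing of that flux.

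The standard fix, which is what the cited references implement, is to reverse the order: first extend $\btau$ itself to $\wt{\btau}\in\bbH(\bdiv;B)$ by a bounded extension operator (row-wise, solve $\Delta w_i=c_i$ in $B\setminus\ov{\Omega}$ with $\partial_{\bn}w_i$ matching the normal trace of $\btau_i$ on $\partial\Omega$, the constants $c_i$ chosen per component so that the Neumann data are compatible, and set the extension equal to $\nabla w_i$ there); only then solve $\Delta\bz=\bdiv(\wt{\btau})$ in $B$ with the convex $H^2$ estimate, so that $\wt{\btau}-\nabla\bz$ is divergence-free on all of the convex, hence simply connected, domain $B$, where the Girault--Raviart right inverse of the curl applies and gives $\bchi$ with $\|\bchi\|_{1,B}\leq C\,\|\wt{\btau}-\nabla\bz\|_{0,B}$; finally restrict everything to $\Omega$. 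With that reordering your argument goes through and essentially reproduces the cited proofs. Relatedly, your appeal to the de Rham lemma ``on the Lipschitz (and, if necessary, simply connected) domain $\Omega$'' is not available here, since the lemma is claimed for arbitrary polygonal and polyhedral regions; the passage through the convex superset is what removes the topological hypothesis, and it only works if the extension precedes the Poisson solve.
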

\begin{proof}
For the proof of $a)$ and $b)$ we refer to \cite[Lemma 3.7]{grt2016} and \cite[Theorem~3.1]{g2020}, respectively. 
We omit further details.
\end{proof}


\section{A second \textit{a posteriori} error estimator}\label{sec:appendix-C}

In this appendix we introduce and analyze another {\it a posteriori} error estimator 
for the augmented mixed finite element scheme \eqref{eq:CBF-augmented-discrete-formulation-H0}, 
which is not based on the Helmholtz decomposition for $\btau\in\bbH(\bdiv;\Omega)$. 
More precisely, this second estimator arises simply from a different way
of bounding $\|\cR_1\|_{\bbH_0(\bdiv;\Omega)'}$ in the preliminary estimate 
for the total error given by \eqref{eq:error-bound-by-R1-R2}. 
Then, with the same notations and discrete spaces from Section \ref{sec:discrete-setting} 
and Appendix \ref{sec:appendix-B}, we now introduce for each $T\in\cT_h$ the local error indicator 
\begin{equation*}
\begin{array}{l}
\ds \wt{\Theta}_{2,T}^2 \,:=\, \Big\|\nabla\bu_h - \frac{1}{\nu}\big(\bsi_h + (\bu_h\otimes\bu_h)\big)^{\rd}\Big\|_{0,T}^2 \\[2ex]
\ds\quad +\, \|\alpha\,\bu_h + \tF\,|\bu_h|^{\rp-2}\bu_h - \bdiv(\bsi_h) - \f\|_{0,T}^2  
+ \sum_{e\in\cE_{h,T}(\Gamma)} \|\bu_\rD - \bu_h\|_{0,e}^2 \,,
\end{array}
\end{equation*}
and define the following global residual error estimator
\begin{equation*}
\Theta_2 \,:=\, \left\{ \sum_{T\in\cT_h} \wt{\Theta}_{2,T}^2 
+ \|\bu_\rD - \bu_h\|_{1/2,\Gamma}^2 \right\}^{1/2} \,.
\end{equation*}

The reliability and efficiency of the {\it a posteriori} error estimator $\Theta_2$ are stated next.
\begin{thm}
Assume that the data $\f$ and $\bu_\rD$ satisfy \eqref{eq:data-assumption-Cea}. 
Then there exist positive constants $\wt{C}_{\tt rel}$ and $\wt{C}_{\tt eff}$, independent of $h$, such that
\begin{equation}\label{eq:estimator2-bound}
\wt{C}_{\tt eff}\,\Theta_2 + {\tt h.o.t.} \,\leq\, \|(\bsi,\bu) - (\bsi_h,\bu_h)\| 
\,\leq\, \wt{C}_{\tt rel}\,\Theta_2 \,.
\end{equation} 
\end{thm}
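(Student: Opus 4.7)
The plan is to follow closely the strategy of Section \ref{eq:first-a-posteriori-error-estimator}, adapting only the step where the Helmholtz decomposition was invoked.

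For the reliability bound, I would start from the preliminary estimate \eqref{eq:error-bounded-by-R} of Lemma \ref{lem:preliminary-reliability}, which holds under the data assumption \eqref{eq:data-assumption-Cea}, and the splitting $\cR = \cR_1 + \cR_2$ leading to \eqref{eq:error-bound-by-R1-R2}. The bound for $\|\cR_2\|_{\bH^1(\Omega)'}$ provided by Lemma \ref{lem:reliability-R2} already controls two of the ingredients of $\wt{\Theta}_{2,T}^2$, namely the constitutive-equation residual $\|\nabla\bu_h - \nu^{-1}(\bsi_h+(\bu_h\otimes\bu_h))^\rd\|_{0,T}$ and the boundary $L^2$-residual $\|\bu_\rD-\bu_h\|_{0,e}$. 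To handle $\cR_1$ without resorting to a Helmholtz decomposition, I would use its equivalent expression \eqref{eq:first-ressidual-alternative}, apply the Cauchy--Schwarz inequality in the two volume integrals, and bound the boundary term by
$|\langle\btau\bn,\bu_\rD-\bu_h\rangle_\Gamma|\le \|\btau\bn\|_{-1/2,\Gamma}\,\|\bu_\rD-\bu_h\|_{1/2,\Gamma}\le C\,\|\btau\|_{\bdiv;\Omega}\,\|\bu_\rD-\bu_h\|_{1/2,\Gamma},$
using the continuity of the normal trace on $\bbH(\bdiv;\Omega)$. This produces precisely the three contributions that complete $\Theta_2$, including the $\H^{1/2}$-term on the boundary. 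Summing up and combining with \eqref{eq:error-bound-by-R1-R2} yields the upper bound in \eqref{eq:estimator2-bound}.

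For the efficiency bound I would argue that each ingredient of $\wt{\Theta}_{2,T}^2$ is already controlled by the local error via results previously established. The bounds \eqref{eq:eff-bound-func1}--\eqref{eq:eff-bound-func2} of Lemma \ref{lem:eff_bound_func-1} cover the constitutive and momentum element residuals in terms of $\|\bsi-\bsi_h\|_{\bdiv;T}$, $\|\bu-\bu_h\|_{1,T}$, and the nonlinear quantities $\|\bu\otimes\bu-\bu_h\otimes\bu_h\|_{0,T}$ and $\big\||\bu|^{\rp-2}\bu-|\bu_h|^{\rp-2}\bu_h\big\|_{0,T}$; the latter two are then globally absorbed into $\|\bu-\bu_h\|_{1,\Omega}^2$ by means of \eqref{eq:Forch_bound-3} and \eqref{eq:eff-convective-term}. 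The boundary $L^2$-term is handled exactly as in Lemma \ref{lem:eff_bound_func-2}. Finally, for the new $\H^{1/2}$-contribution, since $\bu=\bu_\rD$ on $\Gamma$, the continuity of the trace operator from $\bH^1(\Omega)$ into $\bH^{1/2}(\Gamma)$ gives at once
$\|\bu_\rD-\bu_h\|_{1/2,\Gamma}=\|\bu-\bu_h\|_{1/2,\Gamma}\le C\,\|\bu-\bu_h\|_{1,\Omega},$
which is actually sharper than what was needed in Lemma \ref{lem:eff_bound_func-2}. Higher-order terms appear only if the data $\f$ and $\bu_\rD$ are not piecewise polynomial, exactly as discussed at the beginning of Section \ref{sec:efficiency-Theta-1}.

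The main gain with respect to the analysis of $\Theta_1$ is that the reliability proof becomes considerably simpler: no Clément and Raviart--Thomas quasi-interpolants are needed, no local integration by parts on $\ubcurl$ is required, and no regularity on $\bu_\rD$ beyond $\bH^{1/2}(\Gamma)$ is invoked. The price is the presence of the non-localizable norm $\|\bu_\rD-\bu_h\|_{1/2,\Gamma}$, which is the reason why a fully computable variant $\wh\Theta_2$ replaces this term by a discretely evaluable quantity at the expense of losing a rigorous efficiency estimate for that last ingredient. I do not expect any real obstacle in the sketch above; the only delicate point is ensuring that the data-assumption \eqref{eq:data-assumption-Cea} is strong enough to absorb the nonlinear residuals into the left-hand side in the reliability step, but this is exactly what has already been verified in the proof of Lemma \ref{lem:preliminary-reliability}.
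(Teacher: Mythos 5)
Your proposal is correct and follows essentially the same route as the paper: the reliability bound is obtained by estimating $\cR_1$ directly from \eqref{eq:first-ressidual-alternative} via Cauchy--Schwarz and the continuity of the normal trace (no Helmholtz decomposition), and the efficiency bound reuses the local estimates from the analysis of $\Theta_1$ together with the trace inequality $\|\bu_\rD-\bu_h\|_{1/2,\Gamma}\le C\,\|\bu-\bu_h\|_{1,\Omega}$. No gaps.
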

\begin{proof}
First, we observe that the proof of reliability reduces basically to derive another 
upper bound for $\|\cR_1\|_{\bbH_0(\bdiv;\Omega)'}$. Indeed, applying 
the Cauchy--Schwarz and trace inequalities in \eqref{eq:first-ressidual-alternative},
we readily deduce that
\begin{equation}\label{eq:first-ressidual-bound}
\begin{array}{l}
\ds \|\cR_1\|_{\bbH_0(\bdiv;\Omega)'} 
\,\leq\, C\,\bigg\{ \Big\|\nabla\bu_h - \frac{1}{\nu}\big(\bsi_h + (\bu_h\otimes\bu_h)\big)^\rd\Big\|_{0,\Omega} \\[2ex]
\ds\quad +\, \|\alpha\,\bu_h + \tF\,|\bu_h|^{\rp-2}\bu_h - \bdiv(\bsi_h) - \f\|_{0,\Omega}
+ \|\bu_\rD - \bu_h\|_{1/2,\Gamma} 
\bigg\} \,,
\end{array}
\end{equation}
where $C$ is a positive constant independent of $h$. In this way, replacing \eqref{eq:first-ressidual-bound} back into \eqref{eq:error-bound-by-R1-R2},
we obtain the upper bound in \eqref{eq:estimator2-bound} concluding the required estimate. 
On the other hand, for the efficiency estimate we simply observe, thanks to 
the trace theorem in $\bH^1(\Omega)$, that there exists a positive constant $c$, 
depending on $\Gamma$ and $\Omega$, such that
\begin{equation*}
\|\bu_\rD - \bu_h\|_{1/2,\Gamma}^2 
\,=\, \|\bu - \bu_h\|_{1/2,\Gamma}^2 
\,\leq\, c\,\|\bu - \bu_h\|_{1,\Omega}^2 \,.
\end{equation*}
The rest of the arguments are contained in the proof of Theorem \ref{th:efficience-estimator1}.
Further details are omitted. 
\end{proof}

We end this appendix by remarking that the eventual use of $\Theta_2$ in an adaptive algorithm
solving \eqref{eq:CBF-augmented-discrete-formulation-H0} would be discouraged by the 
non-local character of the expression $\|\bu_\rD - \bu_h\|_{1/2,\Gamma}$. 
In order to circumvent this situation, we now replace this term by a suitable upper bound, 
which yields a reliable and fully local {\it a posteriori} error estimator.
Note that unfortunately in exchange for this we lose the possibility of obtaining efficiency analytically.
\begin{thm}
Assume that the data $\f$ and $\bu_\rD$ satisfy \eqref{eq:data-assumption-Cea}, and let
\begin{equation}\label{eq:global-estimator-2-hat}
\wh{\Theta}_2 \,:=\, \left\{ \sum_{T\in\cT_h} \wh{\Theta}_{2,T}^2 \right\}^{1/2} \,,
\end{equation}
where
\begin{equation}\label{eq:local-estimator-2-hat}
\begin{array}{l}
\ds \wh{\Theta}_{2,T}^2 \,:=\, \Big\|\nabla\bu_h - \frac{1}{\nu}\big(\bsi_h + (\bu_h\otimes\bu_h)\big)^\rd\Big\|_{0,T}^2 \\[2ex]
\ds\quad +\, \|\alpha\,\bu_h + \tF\,|\bu_h|^{\rp-2}\bu_h - \bdiv(\bsi_h) - \f\|_{0,T}^2 
+ \sum_{e\in\cE_{h,T}(\Gamma)} \|\bu_\rD - \bu_h\|_{1,e}^2 \,.
\end{array}
\end{equation}
Then, there exists a positive constant $\wh{C}_{\tt rel}$, independent of $h$, such that 
\begin{equation}\label{eq:reliability-Theta2-hat}
\|(\bsi,\bu) - (\bsi_h,\bu_h)\| \,\leq\, \wh{C}_{\tt rel}\,\wh{\Theta}_2 \,.
\end{equation}
\end{thm}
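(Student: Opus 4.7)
The plan is to reduce the statement to the reliability of $\Theta_2$ that was just established, and then absorb the sole non-local term appearing in $\Theta_2$, namely $\|\bu_\rD - \bu_h\|_{1/2,\Gamma}$, into a sum of purely local boundary-edge contributions. Since $\bu_h \in \bC(\ov{\Omega})$, the restriction $\bu_h|_\Gamma$ is continuous on the polyhedral boundary $\Gamma$, and under the standing assumption that $\bu_\rD \in \bH^1(\Gamma)$, the difference $\bu_\rD - \bu_h$ belongs to $\bH^1(\Gamma)$ as well. Hence one may legitimately invoke norm estimates on $\Gamma$ taking this extra regularity into account.

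First, I would start from \eqref{eq:estimator2-bound} (the upper bound), which yields
\begin{equation*}
\|(\bsi,\bu) - (\bsi_h,\bu_h)\| \,\leq\, \wt{C}_{\tt rel}\,\Theta_2\,.
\end{equation*}
Comparing the definitions of $\Theta_2$ and $\wh{\Theta}_2$, every term in $\Theta_2$ other than $\|\bu_\rD - \bu_h\|_{1/2,\Gamma}$ is already present (with equal or smaller weight) in $\wh{\Theta}_2$. In particular, the volume residuals
\begin{equation*}
\Big\|\nabla\bu_h - \tfrac{1}{\nu}(\bsi_h + (\bu_h\otimes\bu_h))^{\rd}\Big\|_{0,\Omega}^{2}
\qan
\|\alpha\,\bu_h + \tF\,|\bu_h|^{\rp-2}\bu_h - \bdiv(\bsi_h) - \f\|_{0,\Omega}^{2}
\end{equation*}
appear identically in both estimators (after summing over $T\in\cT_h$), so only the boundary term needs separate treatment.

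Next, I would bound $\|\bu_\rD - \bu_h\|_{1/2,\Gamma}$ by a sum of broken $\bH^1(e)$ norms over the boundary edges/faces. Using the continuous embedding $\bH^1(\Gamma) \hookrightarrow \bH^{1/2}(\Gamma)$ and the fact that $\bu_\rD - \bu_h$ is continuous on $\Gamma$, one has
\begin{equation*}
\|\bu_\rD - \bu_h\|_{1/2,\Gamma}^{2}
\,\leq\, C\,\|\bu_\rD - \bu_h\|_{1,\Gamma}^{2}
\,\leq\, C \sum_{e\in\cE_h(\Gamma)} \|\bu_\rD - \bu_h\|_{1,e}^{2}\,,
\end{equation*}
where in the last step the global $\bH^1(\Gamma)$ norm is controlled by the sum of local $\bH^1(e)$ norms because $\bu_\rD - \bu_h$ has no jumps across inter-element boundary skeletons. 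Substituting this into the bound for $\Theta_2$ and rearranging produces
\begin{equation*}
\Theta_2^{2} \,\leq\, C\,\wh{\Theta}_2^{2}\,,
\end{equation*}
from which \eqref{eq:reliability-Theta2-hat} follows with $\wh{C}_{\tt rel}$ proportional to $\wt{C}_{\tt rel}$.

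The only delicate point I foresee is the passage from the global fractional-order norm $\|\cdot\|_{1/2,\Gamma}$ to a sum of local full-order edge/face norms, because the natural $\bH^{1/2}$-seminorm is non-local. This is where the extra assumed regularity $\bu_\rD \in \bH^1(\Gamma)$ (already used in Lemma~\ref{lem:reliability-R1}) together with the continuity of $\bu_h$ on $\Gamma$ becomes crucial: it turns the non-local $\bH^{1/2}$ control into a controllable sum of local $\bH^1(e)$ quantities, at the cost of losing the possibility of proving efficiency for the last term in \eqref{eq:local-estimator-2-hat}, exactly as anticipated in the statement.
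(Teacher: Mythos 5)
Your proposal is correct and follows essentially the same route as the paper: invoke the reliability bound for $\Theta_2$ and then replace the non-local term $\|\bu_\rD - \bu_h\|_{1/2,\Gamma}$ via the continuous embedding $\bH^1(\Gamma)\hookrightarrow\bH^{1/2}(\Gamma)$ followed by the decomposition of the $\bH^1(\Gamma)$-norm into the boundary edge/face contributions $\|\bu_\rD-\bu_h\|_{1,e}^2$ (the paper writes this last step as an equality, since the squared norm is additive over the partition of $\Gamma$, but your inequality suffices). Your remarks on the remaining terms of $\Theta_2$ being dominated by those of $\wh{\Theta}_2$ and on the loss of efficiency for the new boundary term are consistent with the paper's discussion.
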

\begin{proof}
It reduces to bound $\|\bu_\rD - \bu_h\|_{1/2,\Gamma}$.
In fact, since $\bH^1(\Gamma)$ is continuously embedded in $\bH^{1/2}(\Gamma)$, 
there exists a positive constant $C$, depending on $\Gamma$, such that 
\begin{equation*}
\|\bu_\rD - \bu_h\|_{1/2,\Gamma}^2 \,\leq\, C\,\|\bu_\rD - \bu_h\|_{1,\Gamma}^2 
\,=\, C\,\sum_{e\in\cE_h(\Gamma)} \|\bu_\rD - \bu_h\|_{1,e}^2 \,,
\end{equation*}
which, together with the upper bound of \eqref{eq:estimator2-bound}, 
implies \eqref{eq:reliability-Theta2-hat} and finishes the proof.
\end{proof}

\end{document}